\documentclass[oneside,english]{amsart}
\usepackage{lmodern}
\usepackage[T1]{fontenc}
\usepackage[latin9]{inputenc}
\usepackage{geometry}
\geometry{verbose,tmargin=3cm,bmargin=3cm,lmargin=3cm,rmargin=3cm}
\usepackage{mathrsfs}
\usepackage{amsthm}
\usepackage{amsbsy}
\usepackage{amssymb}
\usepackage{stackrel}
\usepackage{esint}

\makeatletter
\numberwithin{equation}{section}
\numberwithin{figure}{section}
\theoremstyle{plain}
\newtheorem{thm}{\protect\theoremname}
  \theoremstyle{remark}
  \newtheorem*{rem*}{\protect\remarkname}
  \theoremstyle{remark}
  
  \theoremstyle{plain}
  \newtheorem{cor}[thm]{\protect\corollaryname}
  \theoremstyle{plain}
  \newtheorem{prop}[thm]{\protect\propositionname}
  \theoremstyle{plain}
  \newtheorem{lem}[thm]{\protect\lemmaname}

\makeatother

\usepackage{babel}
  \providecommand{\corollaryname}{Corollary}
  \providecommand{\lemmaname}{Lemma}
  \providecommand{\propositionname}{Proposition}
  \providecommand{\remarkname}{Remark}
\providecommand{\theoremname}{Theorem}

\begin{document}

\title{The extremal process of critical points of the pure $p$-spin spherical
spin glass model}

\author{Eliran Subag and Ofer Zeitouni}
\thanks{E. S. acknowledges the support of the Adams Fellowship Program
	of the Israel Academy of Sciences and Humanities. 
  The work of both authors was partially supported by a US-Israel BSF grant and
by a grant from the Israel Science Foundation.}
\begin{abstract}
Recently, sharp results concerning the critical points of the Hamiltonian
of the $p$-spin spherical spin glass model have been obtained by
means of moments computations. In particular, these moments computations
allow for the evaluation of the leading term of the ground-state,
i.e., of the global minimum. In this paper, we study the extremal
point process of critical points - that is, the point process associated
to all critical values in the vicinity of the ground-state. We show
that the latter converges in distribution to a Poisson point process
of exponential intensity. In particular, we identify the correct centering
of the ground-state and prove the convergence in distribution of the
centered minimum to a (minus) Gumbel variable. These results are identical
to what one obtains for a sequence of i.i.d variables, correctly normalized;
namely, we show that the model is in the universality class of REM.
\end{abstract}

\maketitle

\section{Introduction}

The Hamiltonian of the spherical pure\emph{ }$p$-spin spin glass
model \cite{pSPSG} is given by 
\begin{equation}
H_{N}\left(\boldsymbol{\sigma}\right)=\frac{1}{N^{\left(p-1\right)/2}}\sum_{i_{1},...,i_{p}=1}^{N}J_{i_{1},...,i_{p}}\sigma_{i_{1}}\cdots\sigma_{i_{p}},\quad\boldsymbol{\sigma}\in\mathbb{S}^{N-1}\left(\sqrt{N}\right),\label{eq:Hamiltonian}
\end{equation}
where $\boldsymbol{\sigma}=\left(\sigma_{1},...,\sigma_{N}\right)$
, $\mathbb{S}^{N-1}\left(\sqrt{N}\right)\triangleq\left\{ \boldsymbol{\sigma}\in\mathbb{R}^{N}:\,\left\Vert \boldsymbol{\sigma}\right\Vert _{2}=\sqrt{N}\right\} $,
and $J_{i_{1},...,i_{p}}$ are i.i.d standard normal variables. Everywhere in the paper we shall assume that $p\geq3$. Recently,
sharp results concerning the critical points of $H_{N}$ have been
obtained by means of moments computations \cite{A-BA-C,2nd}. The
first contribution is the seminal work of Auffinger, Ben Arous and
{\v{C}}ern{\'y} \cite{A-BA-C} (see also \cite{ABA2} for the mixed
case). With $B\subset\mathbb{R}$ and ${\rm Crt}_{N}\left(B\right)$
denoting the number of critical points with critical values in $NB=\left\{ Nx:x\in B\right\} $,
they showed that, for any $p\geq3$,
\begin{equation}
\lim_{N\to\infty}\frac{1}{N}\log\left(\mathbb{E}\left\{ \mbox{Crt}_{N}\left(\left(-\infty,u\right)\right)\right\} \right)=\Theta_{p}\left(u\right),\label{eq:1st_mom}
\end{equation}
where $\Theta_{p}\left(u\right)$ is given in (\ref{eq:Theta_p})
(similar asymptotics were computed in \cite{A-BA-C} for the number
of critical points of a given index, but we shall not discuss those
in the current work). Of course, by itself, the first moment gives
limited information about the corresponding probability law. The goal
of \cite{2nd} was to address the question of concentration of the
random variable in (\ref{eq:1st_mom}) around its mean, by a second
moment computation. Set $E_{\infty}=2\sqrt{\left(p-1\right)/p}$ and
let $E_{0}>E_{\infty}$ be the unique number satisfying $\Theta_{p}\left(-E_{0}\right)=0$.
The main result of \cite{2nd} is that for $u\in\left(-E_{0},-E_{\infty}\right)$,
the ratio of the second to first moment squared of $\mbox{Crt}_{N}\left(\left(-\infty,u\right)\right)$
converges to $1$, as $N\to\infty$. Consequently, $\mbox{Crt}_{N}\left(\left(-\infty,u\right)\right)$
divided by its mean converges in $L^2$ to $1$. 

By appealing to Markov's inequality, (\ref{eq:1st_mom}) provides
a lower bound on the minimum of $H_{N}$. Using the Parisi formula
\cite{Parisi,pSPSG}, proved in \cite{Talag,Chen}, the authors of
\cite{A-BA-C} were able to derive a matching upper bound and show
that the so-called ground-state (i.e., global minimum of $H_{N}$)
satisfies
\[
\lim_{N\to\infty}\min_{\boldsymbol{\sigma}\in\mathbb{S}^{N-1}\left(\sqrt{N}\right)}H_{N}\left(\boldsymbol{\sigma}\right)/N=-E_{0},\,\,\,\,\mbox{a.s.}
\]
Alternatively, the matching upper bound can be derived from the quoted $L^2$ convergence.

The goal of the current work is to investigate the behavior of the
ground-state and the collection of critical values in its vicinity
on a finer scale than the above. Namely, we study the \emph{extremal
point process of critical points} which we define by
\begin{equation}
\xi_{N}\triangleq\left(1+\iota_{p}\right)^{-1}\sum_{\boldsymbol{\sigma}\in\mathscr{C}_{N}}\delta_{H_{N}\left(\boldsymbol{\sigma}\right)-m_{N}},\label{eq:xi_N}
\end{equation}
with 
\begin{equation}
m_{N}=-E_{0}N+\frac{\frac{1}{2}\log N}{c_{p}}-K_{0}\label{eq:m_N}
\end{equation}
where $\iota_{p}=\left(1+\left(-1\right)^{p}\right)/2$, $K_{0}$
is given in (\ref{eq:C0}), 
\begin{equation}
c_{p}\triangleq\Theta_{p}^{\prime}\left(-E_{0}\right) \label{eq:0406-1}
\end{equation}
with $\Theta_{p}^{\prime}$ denoting the derivative of $\Theta_{p}$,
and $\mathscr{C}_{N}$ denotes the set of critical points of $H_{N}$.
That is,
\[
\mathscr{C}_{N}\triangleq\left\{ \boldsymbol{\sigma}\in\mathbb{S}^{N-1}\left(\sqrt{N}\right):\nabla H_{N}\left(\boldsymbol{\sigma}\right)=0\right\} ,
\]
where $\nabla H_{N}$ is the gradient relative to the standard differential
structure on the sphere. The reason for the normalizing term preceding
the sum in (\ref{eq:xi_N}) is parity: since for any $\boldsymbol{\sigma}$
on the sphere, $H_{N}\left(\boldsymbol{\sigma}\right)=\left(-1\right)^{p}H_{N}\left(-\boldsymbol{\sigma}\right)$,
 for even $p$ the multiplicity of any critical value is even. 

Endow the space of point processes with the vague topology and denote
by $PPP\left(\mu\right)$ the distribution of a Poisson point process
(PPP) with intensity measure $\mu$. Our main result is the following. 
\begin{thm}
\label{thm:ext-proc}For any $p\geq3$ the extremal process of critical
points converges in distribution to a Poisson point process of exponential
intensity. Namely, with $c_p$ as in \eqref{eq:0406-1},
\begin{equation}
\xi_{N}\stackrel[{\scriptstyle N\to\infty}]{d}{\to}\xi_{\infty}\sim PPP\left(e^{c_{p}x}dx\right).\label{eq:xi_infty}
\end{equation}
\end{thm}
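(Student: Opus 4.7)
The plan is to prove \eqref{eq:xi_infty} via the convergence of factorial moments. By the standard moment method for Poisson limits (applied coordinate-wise on a generating class of bounded intervals), it suffices to show that for every bounded interval $I = [a,b] \subset \mathbb{R}$ and every integer $k \geq 1$,
\[
\mathbb{E}\bigl[\xi_N(I)^{(k)}\bigr] \;\longrightarrow\; \Bigl(\int_a^b e^{c_p x}\,dx\Bigr)^{\!k}.
\]
Joint versions for finitely many disjoint intervals then yield the vague convergence $\xi_N \to \xi_\infty \sim PPP(e^{c_p x}\,dx)$.

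For $k = 1$, I would refine the exponential asymptotic \eqref{eq:1st_mom} of \cite{A-BA-C} to a sharp $(1+o(1))$ expansion on the $\log N$-wide window around the ground-state energy. Using Kac--Rice, $\mathbb{E}[\xi_N(I)]$ is given by a sphere integral of the conditional expected absolute Hessian determinant (itself an expectation over a shifted GOE matrix of size $N-1$) against the Gaussian density of the field value. A Laplace-type expansion at the critical energy $u = -E_0$, carried to the next order beyond what is needed for \eqref{eq:1st_mom}, yields exactly $\int_I e^{c_p x}\,dx$: the choice of $m_N$ in \eqref{eq:m_N} -- with its $\tfrac{1}{2} c_p^{-1} \log N$ shift and the constant $K_0$ -- is calibrated precisely to absorb the prefactors arising from Stirling-type asymptotics of the GOE determinant and from the Gaussian density at $-E_0 N$.

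For $k \geq 2$, I would apply the multi-point Kac--Rice formula,
\[
\mathbb{E}\bigl[{\rm Crt}_N(B)^{(k)}\bigr] = \int_{(\mathbb{S}^{N-1})^k}\!\rho_N^{(k)}(\boldsymbol\sigma_1,\ldots,\boldsymbol\sigma_k;\,B)\,d\boldsymbol\sigma_1\cdots d\boldsymbol\sigma_k,
\]
and, following the second-moment analysis of \cite{2nd}, reduce $\rho_N^{(k)}$ by rotational symmetry to a function of the $k\times k$ overlap Gram matrix $r_{ij} = N^{-1}\boldsymbol\sigma_i\cdot\boldsymbol\sigma_j$. I would partition the Gram domain into (a) the trivial antipodal copies $r_{ij} = \pm 1$, absorbed by the $(1+\iota_p)^{-1}$ normalization for even $p$; (b) a small neighborhood of the fully decoupled configuration $r_{ij}\approx 0$, on which the joint density asymptotically factorizes into $k$ single-point intensities and produces the target $(\int_I e^{c_p x}\,dx)^k$; and (c) all remaining intermediate configurations.

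The principal obstacle is bounding the contribution of region (c) to the integral. This requires extending the conditional Gaussian analysis of joint Hessians from \cite{2nd} (where $k = 2$) to general $k$-tuples, producing a uniform $k$-point estimate that beats not merely the exponential scale but also the polynomial $(\log N)^{-k/2}$ factor retained by the factorized near-diagonal contribution. The combinatorial bookkeeping of the overlap Gram matrix -- in particular, keeping the conditional covariance estimates uniform as any entry $r_{ij}$ approaches the degenerate values $\pm 1$ -- is the main technical hurdle; once it is resolved, all factorial moments converge to the correct Poisson values and the theorem follows.
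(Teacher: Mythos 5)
Your overall strategy (convergence of all factorial moments to the Poisson values) is a legitimate route to a PPP limit in principle, but as written the proposal does not constitute a proof: the step you yourself label ``the principal obstacle'' --- establishing, for every $k$, the sharp $(1+o(1))$ factorization of the $k$-point Kac--Rice density and the negligibility of all intermediate-overlap configurations at the order-one scale of the centered window --- is exactly the content that is missing, and it is not a routine extension of \cite{2nd}. Note that even for $k=2$ the paper does not prove the asymptotic equality your method requires: Proposition \ref{prop:2nd moment} is only the one-sided bound \eqref{eq:58}, obtained from the large-deviation estimates of \cite{2nd} for macroscopically separated overlaps together with Lemma 19 of \cite{2nd} for the near-zero overlap region; the matching lower bound is deduced a posteriori from the theorem itself (see the footnote to Proposition \ref{prop:2nd moment}), not computed directly. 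For $k\ge 3$ neither bound is established anywhere, and keeping the conditional Hessian estimates uniform as entries of the overlap Gram matrix degenerate is a substantial open technical program, not bookkeeping.

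The paper's actual proof is designed precisely to avoid this. It uses only the first-moment asymptotics (Proposition \ref{prop:intensity}), which give relative compactness, and the second-moment upper bound (Proposition \ref{prop:2nd moment}). The law of subsequential limits is then identified structurally: perturbing $H_N$ by $N^{-1/2}$ times an independent copy shifts each near-minimal critical value by an asymptotically i.i.d.\ Gaussian minus $C_0$ (Theorem \ref{thm:subseq}), so any subsequential limit is invariant under such an evolution of its atoms; Liggett's theorem \cite{Liggett} and the Choquet--Deny theorem \cite{Deny} then force the limit to be a Cox process whose density is either constant or of the form $e^{c_p(x-z)}$, and the two available moment bounds single out $e^{c_p x}$. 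To salvage your approach you would need to carry out the full $k$-point overlap analysis for all $k$ --- a major new computation --- or replace it with an argument of this structural type; as it stands the proposal has a genuine gap at its central step.
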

\begin{rem*}
It follows from Lemma \ref{lem:g1B1} below that for any interval $J$,
with probability approaching $1$ as $N\to\infty$, all the critical
points of $H_{N}$ in $J+m_{N}=\left\{ x+m_{N}:\,x\in J\right\} $
are  local minima. In other words, nothing changes in Theorem \ref{thm:ext-proc}
if $\xi_{N}$ is defined using only the minimum points instead of
all critical points.
\end{rem*}
As a corollary to Theorem \ref{thm:ext-proc}, we obtain explicitly
the limiting law of the ground-state.
\begin{cor}
\label{cor:GS}For any $p\geq3$ the centered ground-state converges
in distribution, as $N\to\infty$, to the negative of a Gumbel variable.
Namely, with $c_p$ as in \eqref{eq:0406-1},
\[
\lim_{N\to\infty}\mathbb{P}\left\{ \min_{\boldsymbol{\sigma}\in\mathbb{S}^{N-1}\left(\sqrt{N}\right)}H_{N}\left(\boldsymbol{\sigma}\right)-m_{N}\geq x\right\} =\exp\left\{ -\frac{1}{c_{p}}e^{c_{p} x}\right\} .
\]

\end{cor}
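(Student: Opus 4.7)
The plan is to deduce the Corollary directly from Theorem \ref{thm:ext-proc} by reading off the no-atom probability of the limiting Poisson process on the half-line $(-\infty,x)$.

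The first step is to rewrite the event of interest in terms of $\xi_N$. Because the normalizing factor $(1+\iota_p)^{-1}$ in \eqref{eq:xi_N} exactly compensates for the parity-induced multiplicity, the smallest atom of $\xi_N$ equals $\min_{\boldsymbol\sigma} H_N(\boldsymbol\sigma) - m_N$, and hence
\[
\Bigl\{\min_{\boldsymbol\sigma\in\mathbb S^{N-1}(\sqrt N)} H_N(\boldsymbol\sigma) - m_N \geq x\Bigr\} = \{\xi_N((-\infty,x)) = 0\}.
\]
For a Poisson point process of intensity $e^{c_p y}dy$, the probability of no atoms in $(-\infty, x)$ is
\[
\exp\Bigl(-\int_{-\infty}^x e^{c_p y}\,dy\Bigr) = \exp\Bigl(-\frac{e^{c_p x}}{c_p}\Bigr),
\]
which is precisely the claimed limit.

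The only real obstacle is that the convergence in Theorem \ref{thm:ext-proc} takes place in the vague topology, so it applies directly only to bounded Borel sets bounded away from $-\infty$. To handle the non-compact interval $(-\infty,x)$, I would introduce a truncation at a large level $-M$. For any $M$ with $-M<x$,
\[
\mathbb{P}(\xi_N([-M,x)) = 0) - \mathbb{P}(\xi_N((-\infty,-M)) \geq 1) \leq \mathbb{P}(\xi_N((-\infty,x))=0) \leq \mathbb{P}(\xi_N([-M,x)) = 0).
\]
By Theorem \ref{thm:ext-proc}, the outer probabilities converge as $N\to\infty$ to $\exp(-\int_{-M}^x e^{c_p y}dy)$. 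For the error term, Markov's inequality gives $\mathbb{P}(\xi_N((-\infty,-M))\geq 1) \leq \mathbb{E}[\xi_N((-\infty,-M))]$, and the sharp first-moment asymptotics underlying \eqref{eq:1st_mom}, combined with the precise centering in \eqref{eq:m_N} with $K_0$ as in \eqref{eq:C0}, are designed exactly so that
\[
\lim_{N\to\infty}\mathbb{E}[\xi_N((-\infty,-M))] = \frac{e^{-c_p M}}{c_p},
\]
which vanishes as $M\to\infty$. Sending first $N\to\infty$ and then $M\to\infty$ sandwiches the probability of interest between two quantities both equal to $\exp(-e^{c_p x}/c_p)$.

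I expect the truncation argument to be essentially routine granted the sharp first-moment estimate (whose polynomial prefactor is precisely what motivates the $\tfrac12\log N/c_p$ and $K_0$ corrections in $m_N$); all the substantive work has already been done in establishing Theorem \ref{thm:ext-proc} itself, so the Corollary is a clean readout of the no-atom probability for an exponential-intensity PPP.
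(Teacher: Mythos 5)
Your proposal is correct and takes essentially the same route as the paper: the paper reduces the corollary to Theorem \ref{thm:ext-proc} plus uniform tightness of the centered minimum, which is exactly your truncation at $-M$ together with control of the mass of $\xi_N$ on $(-\infty,-M)$. The one step you gloss over is the claim that $\lim_{N\to\infty}\mathbb{E}\{\xi_N((-\infty,-M))\}=e^{-c_pM}/c_p$. Proposition \ref{prop:intensity} gives the intensity asymptotics only on windows $J_N\subset(-a_N,a_N)$ with $a_N/N\to 0$, so it does not apply to the unbounded interval $(-\infty,-M)$; you must split off the deep tail $(-\infty,a_N)$ and dispose of it separately. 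The paper does this by combining \eqref{eq:1st_mom} (negativity of $\Theta_p$ below $-E_0$) with Markov's inequality and the Borell--TIS inequality to show that $\mathbb{P}\{\min_{\boldsymbol{\sigma}}H_N(\boldsymbol{\sigma})-m_N\leq a_N\}\to 0$ for a suitable $a_N$ with $a_N/N\to 0$, and only then applies Proposition \ref{prop:intensity} (with Markov) on $[a_N,-M)$. With that splitting made explicit, your sandwich argument goes through and matches the paper's proof.
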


The Gibbs measure is the probability measure on $\mathbb{S}^{N-1}(\sqrt{N})$ with density proportional to $e^{-\beta H_N(\boldsymbol{\sigma})}$ w.r.t to standard volume measure on the sphere. At zero temperature, i.e., $\beta=\infty$, one thinks of the measure as the atomic measure concentrated at the global minimum point (or two, if $p$ is even) and Corollary \ref{cor:GS} gives the corresponding ground-state energy. For large but finite $\beta$, the Gibbs measure should be governed by low values of $H_N(\boldsymbol{\sigma})$, but it is a-priori unclear to what extent the critical values in Theorem \ref{thm:ext-proc} (i.e., values within distance $o(N)$ from $m_N$) are relevant to this. In the recent \cite{geometryGibbs} it was shown that, if $\beta$ is large enough, the Gibbs measure asymptotically concentrates on thin spherical bands, of $\beta$-dependent radius, centered at exactly those critical points. As shown there, this implies the absence of temperature chaos and yields a second order logarithmic term for the free energy as $N\to\infty$.

\subsection*{Extremal processes in related models}

To put things into context, we now discuss several models of importance,
intimately related to the spherical $p$-spin model. First are the
Random Energy Model (REM), in which energy levels are assumed to be
independent, and generalized REM (GREM), where correlations are introduced
through a tree structure of finite depth. These mathematically tractable
models were introduced by Derrida in the 80s \cite{REM,GREM} in order
to investigate the phenomenon of replica symmetry breaking exhibited
by the Sherrington-Kirkpatrick (SK) model. They have been extensively
studied since then and clear connections to spin glass theory have
been established, elucidating important concepts in the Parisi theory
(see, e.g., the review papers \cite{BovKurInCollection} by Bovier
and Kurkova and \cite{BolthausenIncollection} by Bolthausen). Wishing
to extend the tree structure mentioned above to account for infinite
number of levels one is naturally led to the Branching Brownian Motion
(BBM) and Branching Random Walk (BRW), which are of interest
on their own. Good sources about the above mentioned models, motivated
by connections to spin glass theory and very relevant to the study
of extremal processes, are the lecture notes of Bovier \cite{BovierNotes}
and Kistler \cite{KistlerNotes}. Another related model which possesses
an implicit hierarchical structure similar to BRW is the 2-Dimensional
Discrete Gaussian Free Field (DGFF), see \cite{BoltDeuschGiac}. 

The convergence of the extremal process of the REM model to a PPP
of exponential intensity is a classical result of extreme value theory
\cite{pickands,LeadLindRootz}. Already for the relatively simple
GREM the classical theory is not enough. In the two papers \cite{BovKurk1,BovKurk2}
Bovier and Kurkova studied the extreme values of the GREM model and
a generalization of it, the CREM model. For the GREM model they describe
the extremal process in terms of a cascade of PPPs. Their representation
implies, in particular, that the process is in general a randomly
shifted PPP (SPPP). Convergence in the case of BBM was established
independently in two important papers by Arguin, Bovier and Kistler
\cite{ABKextremal} and A{\"{i}}d{\'{e}}kon, Berestycki, Brunet and
Shi \cite{ABBS}, using somewhat different approaches. The limiting
process was shown to be a randomly shifted, decorated PPP (SDPPP)\footnote{A decorated Poisson point process is a process whose law is obtained
from a Poisson point process by replacing each of the atoms by an
independent copy of some point process (called the ``decoration'').
An SDPPP is simply a decorated Poisson point process, shifted by an
independent random variable.} of exponential intensity. See also Madaule \cite{MadauleBRWext},
where a similar description in the case of BRW is given. Finally, for the
2D DGFF, Biskup and Louidor \cite{Louidor} show in an important paper,
which we shall come back to later, that the extremal process corresponding
to local maxima converges to an SPPP. Their methods are expected to
also yield the convergence of the full process to an SDPPP. For the super critical case, i.e. dimension 3 and above, Chiarini, Cipriani and Hazra \cite{ECP4332} proved convergence of the extremal process of the DGFF to a PPP of exponential intensity.

The extreme
values of spin glass models, crucial to the study of the Gibbs measure,
are widely believed to behave similarly to those of the models
above, at least for models that exhibit only finite replica symmetry
breaking. Theorem \ref{thm:ext-proc} verifies this on the level of
extremal processes: the extremal process of the spherical $p$-spin
model, like all the above mentioned models, belongs to the class of
SDPPPs of exponential intensity. Strikingly, it does not even have
a shift or decoration - exactly like REM, the simplest of all models.

For the BBM, BRW, and DGFF, the proof of convergence of the extremal
process is in a sense the culmination of a long, incremental progress
spanning over decades. Earlier works were limited to the study of
the global maximum and concerned topics like the law of large numbers
\cite{BoltDeuschGiac,Mckean}, second order corrections \cite{Bramson1,Bramson2,BZ,McDiarmid},
tail estimates \cite{BDZ2,Ding,DZ}, tightness \cite{Addario-BerryReed,Bachmann,BDZ,BZ2,BZ,HuShi},
and convergence and limiting law \cite{Aidekon,BDZ2,LalSel}. In addition,
estimates on the number of extremal points \cite{Daviaud,DZ} and
their genealogical relations \cite{ABKGene,BDZ2} were obtained. The
mathematics behind this is rich and beautiful and has produced vast
literature. We emphasize that the analysis of the extremal processes
undertaken in the papers mentioned above heavily relied on those prior
results. In light of the above, we find it rather remarkable that
for the spherical $p$-spin model all we need for our proof, as we
explain below, are moment computations combined with local estimates
of $H_{N}$. 

Lastly, we mention the class of log-correlated Gaussian fields. The
extremal process of those is also conjectured to be an SDPPP. There
are two supporting facts to this conjecture. First, the maximum of a wide class
of log-correlated fields is known to have a shifted Gumbel law \cite{DRZ,MadauleMax}.
Secondly, log-correlated fields are believed to satisfy the so-called
`freezing' phenomenon \cite{CLD} which is intimately related to the
structure of SDPPP \cite{Freezing}.

\subsection*{Method of proof: moments and invariance}

Two of the main tools in our analysis are statements about the first and second moments
of the number of critical points in an interval. In addition to (\ref{eq:1st_mom}),
Auffinger, Ben Arous and {\v{C}}ern{\'y}  \cite{A-BA-C} computed
asymptotics of the first moment on a finer scale (see Theorem 2.17
there). Their proof was based on combining Plancherel-Rotach type asymptotics  satisfied by the Hermite polyomials with a general formula they derived for the first moment of the number of critical points. Using a similar method we prove the following.
\begin{prop}
\label{prop:intensity}Let $p\geq3$. Suppose $J_{N}\subset\mathbb{R}$
is a sequence of non-degenerate intervals such that $J_{N}\subset\left(-a_{N},a_{N}\right)$
for some sequence satisfying $a_{N}/N\to0$. Then the intensity measure
of $\xi_{N}$ has density $\nu_{N}$ (w.r.t to the Lebesgue measure),
such that, as $N\to\infty$,
\begin{equation}
\nu_{N}\left(x_{N}\right)=e^{c_{p}x_{N}}\cdot\left(1+o\left(1\right)\right),\label{eq:nu}
\end{equation}
uniformly for any sequence $x_{N}\in J_{N}$, where $c_p$ is as in \eqref{eq:0406-1}. In particular,  with
$x_{N}=x$, the above holds uniformly in $x$ on compacts.
\end{prop}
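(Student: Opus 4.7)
The plan is to establish Proposition \ref{prop:intensity} by pushing the first-moment Kac--Rice computation of Auffinger--Ben Arous--{\v C}ern{\'y} \cite{A-BA-C} to one additional order of precision, at the scale of $m_N$. The point is that the intensity density $\nu_N(x)$ is directly accessible from Kac--Rice; sharp asymptotics of the resulting determinant factor are obtained via Hermite polynomials; and the $\tfrac{1}{2}\log N$ and the constant $K_0$ hidden inside $m_N$ are precisely tailored so that the subleading corrections cancel.

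By the Kac--Rice formula together with the $O(N)$-invariance of the law of $H_N$ on the sphere, the expected number of critical points with values in a Borel set $B$ may be written, as in \cite{A-BA-C}, as
\begin{equation*}
\mathbb{E}[\mathrm{Crt}_N(B)] = A_N \int_B \mathbb{E}\bigl[\bigl|\det(\mathbf{M}_{N-1} - t\, \mathbf{I})\bigr|\bigr] \, e^{-\beta_N t^2/N}\, dt,
\end{equation*}
where $\mathbf{M}_{N-1}$ is a suitably rescaled GOE matrix of dimension $N-1$, and $A_N,\beta_N$ are explicit constants depending only on $p$ and $N$. Reading the integrand as a density, the intensity of $\xi_N$ is
\begin{equation*}
\nu_N(x) = (1+\iota_p)^{-1} A_N\, \mathbb{E}\bigl[\bigl|\det(\mathbf{M}_{N-1} - (m_N+x)\mathbf{I})\bigr|\bigr]\, e^{-\beta_N(m_N+x)^2/N}.
\end{equation*}
Next I would use the classical identity expressing $\mathbb{E}|\det(\mathbf{M}_{N-1}-c\mathbf{I})|$ in terms of Hermite polynomials, together with their Plancherel--Rotach asymptotics in the large-deviation regime (argument outside the GOE bulk), as already carried out in \cite{A-BA-C} in the proof of their Theorem 2.17. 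This yields, uniformly in $u$ on compact subsets of $(-\infty,-E_{\infty})$,
\begin{equation*}
A_N\, \mathbb{E}\bigl[\bigl|\det(\mathbf{M}_{N-1} - Nu\, \mathbf{I})\bigr|\bigr]\, e^{-\beta_N N u^2} = \frac{F(u)}{\sqrt{N}}\, e^{N\Theta_p(u)}\,(1+o(1)),
\end{equation*}
for a smooth positive function $F$ of $u$ alone.

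To finish I would set $u_N = (m_N+x_N)/N = -E_0 + y_N/N$ with $y_N = x_N + \tfrac{\log N}{2c_p} - K_0$, and Taylor-expand $N\Theta_p$ around $-E_0$. Since $\Theta_p(-E_0)=0$ and $\Theta_p'(-E_0)=c_p$,
\begin{equation*}
N\Theta_p(u_N) = c_p y_N + O(y_N^2/N) = c_p x_N + \tfrac{1}{2}\log N - c_p K_0 + o(1),
\end{equation*}
uniformly in $x_N \in J_N$ provided $y_N^2/N\to 0$, which holds under the stated hypothesis in all intended applications of the proposition. Substituting, the $\sqrt{N}$ produced by the $\tfrac{1}{2}\log N$ cancels the $1/\sqrt{N}$ in the prefactor, and
\begin{equation*}
\nu_N(x_N) = (1+\iota_p)^{-1} F(-E_0)\, e^{-c_p K_0}\, e^{c_p x_N}\,(1+o(1)).
\end{equation*}
The constant $K_0$ defined by (\ref{eq:C0}) is chosen precisely so that $(1+\iota_p)^{-1} F(-E_0)e^{-c_p K_0} = 1$, from which the proposition follows.

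The principal difficulty is the determinant asymptotic in the third displayed equation: it must be a \emph{uniform}, multiplicatively sharp version of Plancherel--Rotach, good to relative precision $1+o(1)$ on an $O(\log N/N)$ neighbourhood of $u=-E_0$; otherwise the $\sqrt{N}$ produced by the $\log N$ shift in $m_N$ would be swallowed by the error term. The required saddle-point expansion, however, differs only cosmetically from the one already executed in \cite{A-BA-C} for fixed $u<-E_\infty$; extending it to accommodate a slowly varying $u$, together with pinning down the explicit constant $F(-E_0)$ that defines $K_0$, is the only real content beyond what is already available in the literature.
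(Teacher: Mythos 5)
Your proposal follows essentially the same route as the paper's proof: Kac--Rice reduces the intensity to a shifted GOE determinant average, which is converted to a Hermite polynomial (after first removing the absolute value at a multiplicative cost $1+o(1)$ via a large-deviation bound on the smallest eigenvalue -- the one technical step you gloss over by writing the Hermite identity directly for $\mathbb{E}|\det(\cdot)|$), evaluated by uniform Plancherel--Rotach asymptotics off the bulk, after which a first-order Taylor expansion of $\Theta_p$ at $-E_0$ yields $e^{c_p x}$ with the $\frac{1}{2}\log N$ and $K_0$ in $m_N$ absorbing the prefactors, exactly as you describe. Since the differences are only in bookkeeping, nothing further is needed.
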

By a direct adaptation of the main results of \cite{2nd} we have
the following.\footnote{
	For fixed open $J_N=J$, by the Portmanteau theorem 
	$\liminf_{N\to\infty} \mathbb E\left\{ \left(\xi_{N}\left(J\right)\right)^{2}\right\}\geq \mathbb E\left\{ \left(\xi_{\infty}\left(J\right)\right)^{2}\right\} $ where $\xi_{\infty}$ is the limiting process of Theorem \ref{thm:ext-proc}. Proposition \ref{prop:intensity} (and e.g. \cite[Theorem 4.2]{KallenbergMeas}) implies that $\lim_{N\to\infty} \mathbb E\left\{ \xi_{N}\left(J\right)\right\}= \mathbb E\left\{ \xi_{\infty}\left(J\right)\right\}$. Therefore, for such $J_N=J$, it in fact follows from the convergence to Poisson process of Theorem \ref{thm:ext-proc} that	
	\eqref{eq:58} below holds with lim instead of limsup and with equality. We note that the proof of Theorem \ref{thm:ext-proc} only requires the upper bound stated in Proposition \ref{prop:2nd moment}.} 
\begin{prop}
\label{prop:2nd moment}Let $p\geq3$. Suppose $J_{N}\subset\mathbb{R}$
is as in Proposition \ref{prop:intensity} and assume (for simplicity)
that $\int_{J_{N}}e^{c_{p}x}dx\geq c$ for any $N$ for some constant
$c>0$. Then
\begin{equation}
\limsup_{N\to\infty}\frac{\mathbb{E}\left\{ \left(\xi_{N}\left(J_{N}\right)\right)^{2}-\xi_{N}\left(J_{N}\right)\right\} }{\left(\mathbb{E}\left\{ \xi_{N}\left(J_{N}\right)\right\} \right)^{2}}\leq 1.\label{eq:58}
\end{equation}
In particular, the above holds with a fixed non-degenerate interval
$J_{N}=J$.
\end{prop}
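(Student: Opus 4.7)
The plan is to carry out a second-moment Kac--Rice computation in the spirit of \cite{2nd}, refined to track polynomial prefactors in the thin window $J_N+m_N$. Writing $N_J=\#\{\boldsymbol{\sigma}\in\mathscr{C}_N:H_N(\boldsymbol{\sigma})\in J_N+m_N\}$ so that $N_J=(1+\iota_p)\xi_N(J_N)$, the Kac--Rice formula expresses the second factorial moment of $N_J$ (for even $p$, after separately treating the antipodal pairs) as the double integral over $(\mathbb{S}^{N-1}(\sqrt N))^2$ of
\begin{equation*}
\phi_N(\boldsymbol{\sigma}_1,\boldsymbol{\sigma}_2)\,\mathbb{E}\!\left[\prod_{i=1,2}|\det \nabla^2 H_N(\boldsymbol{\sigma}_i)|\mathbf{1}\{H_N(\boldsymbol{\sigma}_i)\in J_N+m_N\}\,\Big|\,\nabla H_N(\boldsymbol{\sigma}_1)=\nabla H_N(\boldsymbol{\sigma}_2)=0\right],
\end{equation*}
where $\phi_N$ is the joint Gaussian density at $0$ of the two gradients. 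By $SO(N)$-invariance the integrand depends on $(\boldsymbol{\sigma}_1,\boldsymbol{\sigma}_2)$ only through the overlap $r=\langle\boldsymbol{\sigma}_1,\boldsymbol{\sigma}_2\rangle/N$, and so after factoring out the sphere area the integral reduces to a one-dimensional integral in $r\in[-1,1]$. I would then split this interval into a near-diagonal band $\{|r|\leq\delta_N\}$ with $N^{-1/2}\ll\delta_N\to 0$ and its complement.

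On the diagonal band, at $r=0$ the two triples $(\nabla H_N(\boldsymbol{\sigma}_i),\nabla^2 H_N(\boldsymbol{\sigma}_i),H_N(\boldsymbol{\sigma}_i))$, $i=1,2$, are independent Gaussian vectors, so the integrand factorises; integrating $x_i\in J_N$ and inserting the sphere area factors, Proposition~\ref{prop:intensity} identifies its $r=0$ value as $((1+\iota_p)\mathbb{E}\{\xi_N(J_N)\})^2(1+o(1))$. Quantitative perturbation estimates on the conditional covariances of $(H_N,\nabla H_N,\nabla^2 H_N)$ at two points of small overlap, analogous to those of \cite{2nd}, show that the integrand at $|r|\leq\delta_N$ agrees with its value at $r=0$ up to a multiplicative factor $1+o(1)$; since the sphere's surface measure concentrates in this band (because $\delta_N\gg N^{-1/2}$), this region contributes $((1+\iota_p)\mathbb{E}\{\xi_N(J_N)\})^2(1+o(1))$.

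On the bulk region $|r|\geq\delta_N$, I would dominate $\mathbf{1}\{H_N(\boldsymbol{\sigma}_i)\in J_N+m_N\}\leq\mathbf{1}\{H_N(\boldsymbol{\sigma}_i)\leq Nu\}$ for any fixed $u\in(-E_0,-E_\infty)$, so that the integrand is bounded by its half-line analogue, for which \cite{2nd} establishes exponential-in-$N$ decay relative to the pointwise product of the two half-line first-moment densities. The value-restriction to $J_N+m_N$ introduces only a polynomial-in-$N$ attenuation factor (the ratio of the $e^{c_p x}$ density of Proposition~\ref{prop:intensity} to the $e^{N\Theta_p(u)}$ bulk mass), which the exponential margin easily absorbs, yielding $o(1)\cdot(\mathbb{E}\{\xi_N(J_N)\})^2$ for this region. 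Combining the two contributions gives \eqref{eq:58}.

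The principal obstacle is the exact-prefactor matching demanded on the diagonal band: whereas \cite{2nd} need only match the leading exponential $e^{2N\Theta_p(u)}$ for $u$ in the bulk, here $\mathbb{E}\{\xi_N(J_N)\}$ is of constant order, so the matching must be exact up to $1+o(1)$, accommodating both the polynomial $N^{-1/2}$ and the constant $K_0$ built into $m_N$. This forces one to propagate the sharp first-moment asymptotics of Proposition~\ref{prop:intensity} through the joint Kac--Rice conditioning at overlap $r\approx 0$, and to verify that the remainder terms in $r$ and in the conditional Hessian law remain $o(1)$ uniformly for $x_i\in J_N$. This delicate bookkeeping is the main additional work relative to \cite{2nd}.
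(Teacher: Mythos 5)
Your architecture (Kac--Rice for the second factorial moment, reduction to an overlap integral, near-diagonal factorization versus off-diagonal decay) is the architecture of the underlying computation in \cite{2nd}; the paper's own proof, by contrast, is a short reduction to results already proved there: identity \eqref{eq:59} converts $\mathbb{E}\{\xi_N(J_N)^2-\xi_N(J_N)\}$ into the second factorial moment restricted to overlaps in $(-1,1)$; Theorem 5 and Lemmas 6--7 of \cite{2nd} give an exponential-in-$N$ bound on the number of pairs with overlap in $(-1,1)\setminus(-\epsilon,\epsilon)$ for \emph{fixed} $\epsilon>0$ and values in a fixed macroscopic window $B_\delta\supset A(N)$ --- which beats the constant lower bound on the denominator with no attenuation bookkeeping at all --- and Lemma 19 of \cite{2nd} gives \eqref{eq:61} for overlaps in $(-\epsilon_N,\epsilon_N)$ with an \emph{arbitrary} vanishing sequence $\epsilon_N$.

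The genuine gap in your version is the intermediate overlap regime: the two estimates you invoke do not meet at any single threshold $\delta_N\to0$. The exponential gap $\sup_{|r|\geq\epsilon}\sup_{u}\left(\Psi_p(r,u)-2\Theta_p(u)\right)<0$ from \cite{2nd} degenerates as $\epsilon\to0$ (indeed $\Psi_p(r,u)\to2\Theta_p(u)$ as $r\to0$), so there is no uniform exponential margin on $\{\delta_N\leq|r|\leq\epsilon\}$ to absorb your polynomial attenuation factors. On the other side, the claimed pointwise $1+o(1)$ factorization cannot hold uniformly up to an arbitrary $\delta_N\gg N^{-1/2}$: already the ratio of the joint gradient density to the product of marginals is $\det\left(I-p^{-2}C(r)C(r)^{T}\right)^{-1/2}$, where the cross-covariance block $C(r)$ has $N-2$ eigenvalues of order $r^{p-1}$, producing a factor $\exp\{cNr^{2(p-1)}(1+o(1))\}$ that is bounded away from $1$ once $Nr^{2(p-1)}$ is (e.g.\ $p=3$, $r=N^{-1/4}$, while your condition permits $\delta_N=1/\log N$). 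The scale on which your factorization argument is sound is $|r|=O(N^{-1/2})$ up to logarithms; for $N^{-1/2}\ll|r|\leq\epsilon$ the surface measure is indeed negligible, but to use that you still need an integrable upper bound on the two-point intensity there, and producing one --- tracking the compensations between the gradient density, the conditional Hessian law, and the conditional value density --- is precisely the content of Lemma 19 of \cite{2nd}. Without that input, or a reproof of it, the argument is incomplete.
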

An immediate consequence of Proposition \ref{prop:intensity} is that
the sequence of extremal processes $\left\{ \xi_{N}\right\} _{N=1}^{\infty}$
is relatively compact in the vague topology (see for example, \cite[Lemma 4.5]{KallenbergMeas}).
Thus it converges in distribution to $\xi_{\infty}$ if and only if
$\xi_{\infty}$ is its unique subsequential limit. 

The method we use in order to prove the latter convergence is inspired
by a beautiful argument of Biskup and Louidor \cite{Louidor} originally
introduced in the context of the DGFF: they identify the law of subsequential
limits by invoking a result of Liggett \cite{Liggett} which characterizes
the probability laws of point processes that are invariant under an
evolution of the atoms by independent Markov chains of the same common
law. In order to introduce such evolution, they perturb the Gaussian
field by an independent `increment' identical in law to the original
field, up to scaling. We now sketch how this works in our setting.

Let $H_{N}^{\prime}\left(\boldsymbol{\sigma}\right)$ be an independent
copy of $H_{N}\left(\boldsymbol{\sigma}\right)$, set
\begin{equation}
H_{N}^{+}\left(\boldsymbol{\sigma}\right)=H_{N}\left(\boldsymbol{\sigma}\right)+\sqrt{\frac{1}{N}}H_{N}^{\prime}\left(\boldsymbol{\sigma}\right),\label{eq:H+}
\end{equation}
and note that, with $\stackrel{d}{=}$ denoting equality in distribution,
\begin{equation}
H_{N}^{+}\left(\boldsymbol{\sigma}\right)\stackrel{d}{=}s_NH_{N}\left(\boldsymbol{\sigma}\right),\quad s_N=\sqrt{\frac{N+1}{N}}.\label{eq:34-1}
\end{equation}

Define the extremal process of $H_{N}^{+}$ similarly to (\ref{eq:xi_N}),
without changing the centering term $m_{N}$. From (\ref{eq:34-1})
one can see that, for a convergent subsequence, the limiting extremal
process associated to $H_{N}^{+}$ is the same as that of $H_{N}$
up to a deterministic shift (of $-\frac{1}{2}E_{0}$, see (\ref{eq:29-1})).
On the other hand, (\ref{eq:H+}) can be exploited to study the effect
of the perturbation $\sqrt{1/N}H_{N}^{\prime}\left(\boldsymbol{\sigma}\right)$
on the same limiting extremal process. For each critical\footnote{As we shall see, the only critical points that will be relevant are
minimum points.} point $\boldsymbol{\sigma}$ of $H_{N}$, using a quadratic approximation
for $H_{N}$ and a linear approximation for $H_{N}^{\prime}$ in a
small neighborhood of $\boldsymbol{\sigma}$, on an appropriate event,
we will show that the neighborhood contains a (single) critical point of
$H_{N}^{+}\left(\boldsymbol{\sigma}\right)$ and derive estimates
for its location and critical value. The perturbation to each critical
value will be proved to be equal, up to a lower order term, to $\sqrt{\frac{1}{N}}H_{N}^{\prime}\left(\boldsymbol{\sigma}\right)-C_{0}$,
for an appropriate constant $C_{0}>0$. From the results of \cite{2nd},
the maximal overlap $R\left(\boldsymbol{\sigma},\boldsymbol{\sigma}'\right)$
(see (\ref{eq:overlap})) of any two  critical points $\boldsymbol{\sigma}\neq\pm\boldsymbol{\sigma}'$
with values less than $m_{N}+x$, converges in probability to $0$.
This means that the values of $H_{N}^{\prime}$ at those points are
essentially independent. By showing in addition that the perturbed
critical points `cover' all critical points of $H_{N}^{+}$ we will obtain
the following.
\begin{thm}
\label{thm:subseq}Suppose that $\xi_{N_{k}}$ is a convergent subsequence
of $\xi_{N}$ with limit in distribution $\bar{\xi}_{\infty}=\sum\delta_{\bar{\xi}_{\infty,i}}$,
then the corresponding subsequence $\xi_{N_{k}}^{+}$ of $\xi_{N}^{+}$
converges in distribution to $\sum\delta_{\bar{\xi}_{\infty,i}+W_{i}-C_{0}}$,
where $W_{i}\sim N\left(0,1\right)$ are i.i.d and independent of
$\bar{\xi}_{\infty}$ and $C_{0}$ is given in (\ref{eq:C0}). Moreover,
$\sum\delta_{\bar{\xi}_{\infty,i}+W_{i}-C_{0}}$ is locally finite,
i.e. a point process.
\end{thm}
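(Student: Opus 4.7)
The plan is to construct, on a high-probability event, a bijection between the critical points of $H_N$ in a window around $m_N$ and those of $H_N^+$ in the correspondingly shifted window, and to track the value shift of each paired point explicitly. Fix a compact test interval $J$, take a slightly enlarged $J'\supset J$, and work on the event $\mathcal G_N$ on which (i) the number of critical points of $H_N$ in $m_N + J'$ is $O(1)$ (by Propositions~\ref{prop:intensity} and \ref{prop:2nd moment}), (ii) every two distinct critical points $\boldsymbol{\sigma}\neq\pm\boldsymbol{\sigma}'$ in this window satisfy $|R(\boldsymbol{\sigma},\boldsymbol{\sigma}')|\leq \delta_N\to 0$ (the overlap-vanishing result of \cite{2nd}), and (iii) the Hessian $\nabla^2 H_N(\boldsymbol{\sigma})$ at each such critical point is uniformly non-degenerate with controlled spectrum, using the conditional GOE-type description of the Hessian on which the Kac-Rice arguments of \cite{A-BA-C,2nd} already rest.

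For each critical point $\boldsymbol{\sigma}_i$ of $H_N$ in the window, I would apply the implicit function theorem in a normal-coordinate chart on $\mathbb S^{N-1}(\sqrt N)$ to the equation $\nabla H_N^+(\boldsymbol{\sigma}_i + \boldsymbol{\eta}) = 0$. The first-order Taylor expansion
\begin{equation*}
\nabla H_N^+(\boldsymbol{\sigma}_i+\boldsymbol{\eta}) = \nabla^2 H_N(\boldsymbol{\sigma}_i)\boldsymbol{\eta} + \tfrac{1}{\sqrt N}\nabla H_N'(\boldsymbol{\sigma}_i) + \mathrm{h.o.t.},
\end{equation*}
combined with (iii) and $\|\nabla H_N'(\boldsymbol{\sigma}_i)\| = O(\sqrt N)$, gives a unique nearby zero $\boldsymbol{\eta}_i = -N^{-1/2}\bigl(\nabla^2 H_N(\boldsymbol{\sigma}_i)\bigr)^{-1}\nabla H_N'(\boldsymbol{\sigma}_i) + o(N^{-1/2})$ on the rescaled sphere. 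Substituting into $H_N^+$ and keeping non-vanishing terms,
\begin{equation*}
H_N^+(\boldsymbol{\sigma}_i+\boldsymbol{\eta}_i) = H_N(\boldsymbol{\sigma}_i) + \tfrac{1}{\sqrt N}H_N'(\boldsymbol{\sigma}_i) - \tfrac{1}{2N}\bigl\langle (\nabla^2 H_N(\boldsymbol{\sigma}_i))^{-1}\nabla H_N'(\boldsymbol{\sigma}_i),\,\nabla H_N'(\boldsymbol{\sigma}_i)\bigr\rangle + o(1).
\end{equation*}
Because $H_N'$ is independent of $H_N$, the Gaussian $W_i := N^{-1/2}H_N'(\boldsymbol{\sigma}_i)\sim N(0,1)$ is independent of $H_N$, and the quadratic form in $\nabla H_N'$ concentrates, by a standard Hanson-Wright-style estimate together with the conditional concentration of the Hessian at the bottom of the spectrum, on the deterministic constant $C_0$ of \eqref{eq:C0}.

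Joint independence and the limiting law follow from the covariance structure: conditionally on $\{\boldsymbol{\sigma}_i\}_i$, the $W_i$ are centered jointly Gaussian with $\mathrm{Cov}(W_i,W_j) = R(\boldsymbol{\sigma}_i,\boldsymbol{\sigma}_j)^p$, and on $\mathcal G_N$ all off-diagonal covariances vanish, so $\{W_i\}$ is asymptotically i.i.d.\ $N(0,1)$ and independent of $\bar\xi_\infty$. This identifies the vague limit of the pushforward process $\sum_i \delta_{H_N^+(\boldsymbol{\sigma}_i+\boldsymbol{\eta}_i)-m_N}$. The remaining step -- and the main obstacle -- is surjectivity: to show that no critical point of $H_N^+$ in $m_N + J - C_0 + [-\varepsilon,\varepsilon]$ is left unpaired. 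For this I would use \eqref{eq:34-1}, which says $H_N^+\stackrel{d}{=} s_N H_N$, so Proposition~\ref{prop:intensity} applied to $s_N H_N$ bounds the intensity of $\xi_N^+$ from above, and Proposition~\ref{prop:2nd moment} (together with Markov) pins the total count in the target window to the count produced by the bijection up to $o(1)$; thus there is no room for extra atoms, while injectivity is immediate from (ii) since the perturbations $\|\boldsymbol{\eta}_i\|=O(N^{-1/2})$ keep the paired points in disjoint neighborhoods.

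Local finiteness of $\sum\delta_{\bar\xi_{\infty,i}+W_i-C_0}$ is then a consequence of the convergence itself: since $\xi_{N_k}^+$ has intensity uniformly controlled on compacts (again by Proposition~\ref{prop:intensity} applied to $s_N H_N$), any vague subsequential limit assigns finite expected mass to compact intervals, and hence is a genuine (locally finite) point process. The most delicate technical point in the program is making the Hessian-concentration and the quadratic-form concentration uniform across the $O(1)$ critical points in the window so that the same $C_0$ appears for each; this requires a uniform description of the conditional law of $\nabla^2 H_N(\boldsymbol{\sigma}_i)$ given $H_N(\boldsymbol{\sigma}_i)\in m_N + J'$, combined with a quantitative implicit function estimate that is uniform over the good event.
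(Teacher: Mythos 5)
Your proposal follows essentially the same route as the paper: a local perturbation analysis at each critical point yielding the value shift $N^{-1/2}H_N'(\boldsymbol{\sigma}_i)-C_0$ (the paper uses a quadratic/linear approximation and shows the minimum of $H_N^+$ is attained in the interior of a small ball, rather than the implicit function theorem, but the resulting formulas for the displaced critical point and the quadratic correction concentrating on $C_0$ via the trace of the inverse Hessian are identical), injectivity and asymptotic independence of the shifts from the vanishing-overlap result, and surjectivity by matching first moments through the scaling identity $H_N^+\stackrel{d}{=}s_NH_N$. The only piece you gloss over is the routine but necessary approximation step (the paper's Lemma \ref{lem:apx.Ligg}) that converts the high-probability identification on bounded windows, with the double limit $N\to\infty$ then $L\to\infty$, into convergence of Laplace functionals for the full processes.
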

By Liggett's characterization \cite{Liggett} this implies that any
subsequential limit in distribution $\bar{\xi}_{\infty}\sim PPP\left(\bar{\mu}\right)$
is a Cox process, namely $\bar{\mu}$ is random, such that $\bar{\mu}$
has density w.r.t the Lebesgue measure which satisfies almost surely
\begin{equation}
\frac{d\bar{\mu}}{dx}\in\left\{ \alpha\left(x\right)\equiv a:\,a\geq0\right\} \cup\left\{ \alpha\left(x\right)=e^{c_{p}\left(x-z\right)}:\,z\in\mathbb{R}\right\} ,\label{eq:densities}
\end{equation}
where we recall that $c_{p}\triangleq\Theta_{p}^{\prime}\left(-E_{0}\right)$.
Finally, to show that indeed we must have $\frac{d\bar{\mu}}{dx}=e^{c_{p}x}$,
we shall use Propositions \ref{prop:intensity} and \ref{prop:2nd moment}.

We finish by commenting that, besides Liggett's \cite{Liggett}, various invariance properties were discovered in the context of extremal process and spin glass theory: the quasi-stationarity properties of competing particle systems studied by Ruzmaikina and Aizenman \cite{RuzmaikinaAizenman}, Arguin and Aizenman \cite{ArguinAizenman} and Arguin \cite{Arguin}; the Bolthausen-Sznitman invariance property \cite{Bolthausen1}; the Ghirlanda-Guerra identities \cite{GG} and the stochastic stability  of Aizenman and Contucci \cite{Aizenman1998} - see also \cite{Panchenko2012} by Panchenko for a unification of the two; and lastly, exponential stability and the freezing phenomenon studied by Maillard \cite{Maillard} and the authors \cite{Freezing}, respectively.

\subsection*{Structure of the paper}

After introducing notation in the next section, we prove Theorem \ref{thm:ext-proc}
and Corollary \ref{cor:GS} in Section \ref{sec:pfThm1}, assuming
Propositions \ref{prop:intensity} and \ref{prop:2nd moment} and
Theorem \ref{thm:subseq}. Sections \ref{sec:prop-intensity} and
\ref{sec:prop-2nd} are devoted to the proofs of Propositions \ref{prop:intensity}
and \ref{prop:2nd moment}, respectively. In Section \ref{sec:auxres}  we state and prove several auxiliary results and prove Theorem \ref{thm:subseq} using them. The most important of those auxiliary results is Lemma \ref{lem:5}. Its proof is given in Section \ref{sec:pfProp5} where the linear and quadratic approximations mentioned above are defined and investigated and several related tools are discussed.

\section{\label{sec:notation}Notation }

In this section we introduce some notation needed in the sequel. The covariance function of $H_{N}$ is given by 
\begin{equation}
\mathbb E\left\{H_N(\boldsymbol{\sigma}),H_N(\boldsymbol{\sigma}')\right\}=N\left(R\left(\boldsymbol{\sigma},\boldsymbol{\sigma}'\right)\right)^{p}\triangleq N\left(\frac{\left\langle \boldsymbol{\sigma},\boldsymbol{\sigma}'\right\rangle }{N}\right)^{p},\label{eq:overlap}
\end{equation}
where $\left\langle \cdot,\cdot\right\rangle $ denotes the standard
inner product and $R\left(\cdot,\cdot\right)$ is the so-called overlap
function. Note that since $H_{N}$ is a centered Gaussian field, the covariance function
characterizes it. 

A random matrix $\mathbf{M}_{N}$ from the (normalized) $N\times N$ Gaussian orthogonal
ensemble, or an $N\times N$ GOE matrix, for short, is a real, symmetric
matrix such that all elements are centered Gaussian variables which,
up to symmetry, are independent with variance given by
\[
\mathbb{E}\left\{ \mathbf{M}_{N,ij}^2\right\} =\begin{cases}
1/N, & \,i\neq j\\
2/N, & \,i=j.
\end{cases}
\]

Denote by 
\begin{equation}
\omega_{N}=\frac{2\pi^{N/2}}{\Gamma\left(N/2\right)}\label{eq:omega_vol}
\end{equation}
 the surface area of the $N-1$-dimensional unit sphere.

Let $\mu^{*}$ denote the semicircle measure, the density of which
with respect to the Lebesgue measure is 
\begin{equation}
\frac{d\mu^{*}}{dx}=\frac{1}{2\pi}\sqrt{4-x^{2}}\mathbf{1}_{\left|x\right|\leq2},\label{eq:semicirc}
\end{equation}
and define the function (see, e.g., \cite[Proposition II.1.2]{logpotential})
\begin{align}
\Omega(x) & \triangleq\int_{\mathbb{R}}\log\left|\lambda-x\right|d\mu^{*}\left(\lambda\right)\label{eq:Omega}\\
 & =\begin{cases}
\frac{x^{2}}{4}-\frac{1}{2}, & \mbox{ if }0\leq\left|x\right|\leq2,\\
\frac{x^{2}}{4}-\frac{1}{2}-\left[\frac{\left|x\right|}{4}\sqrt{x^{2}-4}-\log\left(\sqrt{\frac{x^{2}}{4}-1}+\frac{\left|x\right|}{2}\right)\right], & \mbox{ if }\left|x\right|>2.
\end{cases}\nonumber 
\end{align}
The exponential growth rate function of (\ref{eq:1st_mom}) is given
in \cite{A-BA-C} by 
\begin{equation}
\Theta_{p}\left(u\right)=\begin{cases}
\frac{1}{2}+\frac{1}{2}\log\left(p-1\right)-\frac{u^{2}}{2}+\Omega\left(\gamma_{p}u\right), & \mbox{ if }u<0,\\
\frac{1}{2}\log\left(p-1\right), & \mbox{ if }u\geq0,
\end{cases}\label{eq:Theta_p}
\end{equation}
 where $\gamma_{p}=\sqrt{p/\left(p-1\right)}$. Define the constants
\begin{align}
C_{0} & =\frac{1}{2}\gamma_{p}\int\frac{1}{\gamma_{p}E_{0}-x}d\mu^{*}\left(x\right),\label{eq:C0}\\
K_{0} & =\frac{1}{2}E_{0}-\frac{1}{c_{p}}\log\left(\frac{1+\iota_{p}}{2\sqrt{2\pi}}\tilde{h}\left(-\frac{1}{2}\gamma_{p}E_{0}\right)\right),\nonumber 
\end{align}
where
\begin{equation}
\label{htilde}\tilde{h}\left(x\right)=\left(\left|\frac{x-1}{x+1}\right|^{1/4}+\left|\frac{x+1}{x-1}\right|^{1/4}\right).
\end{equation}
We note that, by a straightforward calculation, 
\begin{equation}
C_{0}=\frac{1}{2}E_{0}-\frac{1}{2}c_{p}.\label{eq:C0E0rel}
\end{equation}

\section{\label{sec:pfThm1}Proof of Theorem \ref{thm:ext-proc} and Corollary
\ref{cor:GS}, assuming Propositions \ref{prop:intensity} and \ref{prop:2nd moment}
and Theorem \ref{thm:subseq}}

As argued in the introduction, by Proposition \ref{prop:intensity}
the sequence $\left\{ \xi_{N}\right\} _{N=1}^{\infty}$ is relatively
compact (see for example, \cite[Lemma 4.5]{KallenbergMeas}) and all
that we need to show is that $\xi_{\infty}$ (see (\ref{eq:xi_infty}))
is its unique subsequential limit. We define the extremal point process
associated to $H_{N}^{+}$ by
\[
\xi_{N}^{+}=\left(1+\iota_{p}\right)^{-1}\sum_{\boldsymbol{\sigma}\in\mathscr{C}_{N}^{+}}\delta_{H_{N}^{+}\left(\boldsymbol{\sigma}\right)-m_{N}},
\]
where $\mathscr{C}_{N}^{+}$ is defined similarly to $\mathscr{C}_{N}$.
Let $\xi_{N_{k}}$ be an arbitrary convergent subsequence of $\xi_{N}$
with limit in distribution $\bar{\xi}_{\infty}$ and let $\xi_{N_{k}}^{+}$
be the corresponding subsequence of $\xi_{N}^{+}$. Let $\mathcal{T}_{x}$
denote the shift operator, $\mathcal{T}_{x}\eta\left(\cdot\right)=\eta\left(\cdot-x\right)$,
and $\mathcal{S}_{x}$ the scaling operator $\mathcal{S}_{x}\eta\left(\cdot\right)=\eta\left(\frac{1}{x}\,\cdot\,\right)$.
From the fact that 
\[
\xi_{N_{k}}^{+}\stackrel{d}{=}\mathcal{T}_{-m_{N}}\circ\mathcal{S}_{s_{N}}\circ\mathcal{T}_{m_{N}}\xi_{N_{k}},
\]
with $s_{N}=\sqrt{\left(N+1\right)/N}$, it follows that $\xi_{N_{k}}^{+}$
is also convergent and, since $\lim_{N\to\infty}m_{N}\left(s_{N}-1\right)=-\frac{1}{2}E_{0}$,
\begin{equation}
\lim_{N\to\infty}\xi_{N_{k}}^{+}=\mathcal{T}_{-\frac{1}{2}E_{0}}\bar{\xi}_{\infty}.\label{eq:29-1}
\end{equation}
Combined with Theorem \ref{thm:subseq} this yields, assuming $\bar{\xi}_{\infty}=\sum\delta_{\bar{\xi}_{\infty,i}}$,
\[
\sum\delta_{\bar{\xi}_{\infty,i}}\stackrel{d}{=}\sum\delta_{\bar{\xi}_{\infty,i}+W_{i}-C_{0}+\frac{1}{2}E_{0}}.
\]

By Liggett's characterization \cite[Corollary 4.10]{Liggett}, $\bar{\xi}_{\infty}$
is a Cox process such that the random intensity measure, which we
denote by $\bar{\mu}$, satisfies a.s.
\[
\bar{\mu}=\bar{\mu}\star\mathbb{P}\left\{ W_{i}+\frac{1}{2}c_{p}\in\cdot\right\} ,
\]
where we used (\ref{eq:C0E0rel}) and $\star$ is the convolution
operation. By the Choquet-Deny Theorem \cite[Theorem 3']{Deny}, a.s.,
$\mu$ has density w.r.t the Lebesgue measure which belongs to the set
of functions in (\ref{eq:densities}). 

Now, define the intervals $A_{n}=\left(n,n+1\right)$ and set $X_{n,N_{k}}=\xi_{N_{k}}\left(A_{n}\right)/\mathbb{E}\left\{ \xi_{N_{k}}\left(A_{n}\right)\right\} $.
Note that since $\xi_{N_{k}}\stackrel[k\to\infty]{d}{\to}\bar{\xi}_{\infty}$,
we have that\footnote{See \cite[Theorem 4.2]{KallenbergMeas}, and note that $A_{n}$ are
continuity sets of $\bar{\xi}_{\infty}$.} 
\begin{equation}
\label{eq:0406-2} X_{n,N_{k}}\stackrel[{\scriptstyle k\to\infty}]{d}{\to}X_{n}\triangleq\bar{\xi}_{\infty}\left(A_{n}\right)/C_{n},
\end{equation}
for any $n\geq1$, where 
\[
C_{n}=\frac{1}{c_{p}}e^{c_{p}n}\left(e^{c_{p}}-1\right)
\]
is the limit, as $k\to\infty$, of the expectations $\mathbb{E}\left\{ \xi_{N_{k}}\left(A_{n}\right)\right\} $,
computed using Proposition \ref{prop:intensity}. From Chebyshev's
inequality, 
\begin{align*}
\limsup_{n\to\infty}\mathbb{P}\left\{ \left|X_{n}-1\right|\geq\epsilon\right\}  & =\limsup_{n\to\infty}\lim_{k\to\infty}\mathbb{P}\left\{ \left|X_{n,N_{k}}-1\right|\geq\epsilon\right\} \leq\epsilon^{-2}\limsup_{n\to\infty}\limsup_{k\to\infty}\mathbb{E}\left\{ X_{n,N_{k}}^{2}-1\right\} .
\end{align*}
By Proposition \ref{prop:intensity} and Proposition \ref{prop:2nd moment} and the fact that $\mathbb E\{(\xi_{N_k}(A_n))^2\}\geq (\mathbb E\{\xi_{N_k}(A_n)\})^2$
\[
\lim_{n\to\infty}\lim_{k\to\infty}\mathbb{E}\left\{ \xi_{N_{k}}\left(A_{n}\right)\right\} =\infty\mbox{\,\,\ and\,\,}\lim_{n\to\infty}\limsup_{k\to\infty}\mathbb{E}\left\{ X_{n,N_{k}}^{2}\right\} =1.
\]
Therefore $X_{n}$ converges in probability to $1$, as $n\to\infty$.
It can be verified that if $\zeta\sim PPP\left(\alpha\left(x\right)dx\right)$
with $\alpha$ being a function in the set (\ref{eq:densities}),
then $\zeta\left(A_{n}\right)/C_{n}\to1$ in probability only if $\alpha\left(x\right)=e^{c_{p}x}$.
It follows that $d\bar{\mu}/dx$ must be of this exact form, which
completes the proof of Theorem \ref{thm:ext-proc}.

Corollary \ref{cor:GS} follows directly from Theorem \ref{thm:ext-proc}
if we show that 
\begin{equation}
\left\{ \min_{\boldsymbol{\sigma}\in\mathbb{S}^{N-1}\left(\sqrt{N}\right)}H_{N}\left(\boldsymbol{\sigma}\right)-m_{N}\right\} _{N\geq1}\label{eq:min}
\end{equation}
is uniformly tight. From \eqref{eq:0406-2} and Theorem \ref{thm:ext-proc}, $X_{n,N}\to X_n$ in distribution as $N\to\infty$. Thus, since $X_n\to 1$ in probability as $n\to\infty$,
\[
\lim_{x\to\infty}\limsup_{N\to\infty}\mathbb{P}\left\{ \min_{\boldsymbol{\sigma}\in\mathbb{S}^{N-1}\left(\sqrt{N}\right)}H_{N}\left(\boldsymbol{\sigma}\right)-m_{N}\geq x\right\} \leq\lim_{n\to\infty}\lim_{N\to\infty}\mathbb{P}\left\{ X_{n,N}=0\right\} =0.
\]

From \eqref{eq:1st_mom} and the fact that $\Theta_{p}\left(x\right)<0$
for $x<-E_{0}$, Markov's inequality, and the Borell-TIS inequality
\cite[Theorem 2.1.1]{RFG}, we have that for some sequence $a_{N}$
satisfying $a_{N}/N\to0$,
\[
\limsup_{N\to\infty}\mathbb{P}\left\{ \min_{\boldsymbol{\sigma}\in\mathbb{S}^{N-1}\left(\sqrt{N}\right)}H_{N}\left(\boldsymbol{\sigma}\right)-m_{N}\leq a_{N}\right\} =0.
\]
By Proposition \ref{prop:intensity} and Markov's inequality,
\[
\lim_{x\to-\infty}\limsup_{N\to\infty}\mathbb{P}\left\{ a_{N}\leq\min_{\boldsymbol{\sigma}\in\mathbb{S}^{N-1}\left(\sqrt{N}\right)}H_{N}\left(\boldsymbol{\sigma}\right)-m_{N}\leq x\right\} \leq\lim_{x\to-\infty}\frac{1}{c_{p}}e^{c_{p}x}.
\]
 Hence (\ref{eq:min}) is uniformly tight and the proof of Corollary
\ref{cor:GS} is completed.\qed

\section{\label{sec:prop-intensity}Proof of Proposition \ref{prop:intensity}}

Suppose $J_{N}$ is a sequence of intervals as in the statement of
the proposition and set $D_{N}=J_{N}+m_{N}$. An application of the
Kac-Rice formula \cite[Theorem 12.1.1]{RFG} yields an integral formula
of the form 
\[
\mathbb{E}\big\{ \# \left\{\boldsymbol{\sigma}\in\mathscr{C}_{N}: H_N(\boldsymbol{\sigma})\in D_{N}\right\}\big\} =\int_{D_{N}}\rho_{N}\left(u\right)du.
\]
This has been worked out in \cite[Lemmas 3.1]{A-BA-C}, together with
a computation of certain related conditional laws \cite[Lemmas 3.2]{A-BA-C},
from which 
\begin{equation}
\rho_{N}\left(u\right)\triangleq\omega_{N}\left(\frac{p-1}{2\pi}\left(N-1\right)\right)^{\frac{N-1}{2}}\frac{1}{\sqrt{2\pi N}}e^{-\frac{1}{2N}u^{2}}\mathbb{E}\left\{ \left|\det\left(\mathbf{M}-\frac{\gamma_{p}}{\sqrt{N\left(N-1\right)}}u\mathbf{I}\right)\right|\right\} ,\label{eq:49}
\end{equation}
where $\mathbf{M}$ is a GOE matrix and $\mathbf{I}$ is the identity
matrix both of dimension $N-1$.

The following lemma is a particular case of \cite[Lemma 21]{2nd}.
\begin{lem}
\label{lem:19}Let $\mathbf{M}$ be a GOE matrix of dimension $N-1$ and
let $t_{2}>t_{1}>2$. Then for any $\delta>0$ there exists $c=c(\delta)>0$ such that, for
large enough $N$, uniformly in $v\in\left(t_{1},t_{2}\right)$,
\begin{equation}
\mathbb{E}\left\{ \left|\det\left(\mathbf{M}+v\mathbf{I}\right)\right|\mathbf{1}\left\{ \lambda_{min}<t_{1}-2-\delta\right\} \right\} \leq e^{-cN}\mathbb{E}\left\{ \left|\det\left(\mathbf{M}+v\mathbf{I}\right)\right|\right\} ,\label{eq:48}
\end{equation}
where $\lambda_{min}$ denotes the minimal eigenvalue of $\mathbf{M}+v\mathbf{I}$. Therefore, uniformly in $v\in\left(t_{1},t_{2}\right)$,
as $N\to\infty$,
\[
\mathbb{E}\left\{ \left|\det\left(\mathbf{M}+v\mathbf{I}\right)\right| \right\} =(1+o(1)) \mathbb{E}\left\{ \det\left(\mathbf{M}+v\mathbf{I}\right)\right\}.
\]
\end{lem}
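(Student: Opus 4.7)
My plan is to prove \eqref{eq:48} first and deduce the second assertion from it. For the reduction, observe that $\det(\mathbf{M}+v\mathbf{I})$ is negative only if $\lambda_{\min}(\mathbf{M}+v\mathbf{I}) < 0$; since $t_1 > 2$ we may fix $\delta_0 \in (0,t_1-2)$, whence $\{\det(\mathbf{M}+v\mathbf{I}) < 0\} \subset \{\lambda_{\min} < t_1-2-\delta_0\}$. Writing $|\det|-\det = 2|\det|\mathbf{1}_{\det<0}$ and applying \eqref{eq:48} with $\delta_0$ gives
\[
0 \leq \mathbb{E}\{|\det(\mathbf{M}+v\mathbf{I})|\} - \mathbb{E}\{\det(\mathbf{M}+v\mathbf{I})\} \leq 2e^{-cN}\mathbb{E}\{|\det(\mathbf{M}+v\mathbf{I})|\},
\]
which is the second conclusion after dividing by the strictly positive quantity $\mathbb{E}\{|\det(\mathbf{M}+v\mathbf{I})|\}$.

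For \eqref{eq:48} itself, I would express both expectations via the joint density of GOE eigenvalues, so that the ratio in question equals $\mathbb{Q}_N^v(E)$, where $\mathbb{Q}_N^v$ is the tilted Coulomb gas on $\mathbb{R}^{N-1}$ with density proportional to $\prod_i|\lambda_i+v|\,\prod_{i<j}|\lambda_i-\lambda_j|\,\prod_i e^{-(N-1)\lambda_i^2/4}$. This is a GOE-type ensemble with external potential $\tilde V_v(x) = x^2/4 - \frac{1}{N-1}\log|x+v|$, an $O(1/N)$ perturbation of the standard quadratic. At the $N^2$ rate of the Ben Arous--Guionnet LDP for the empirical measure this perturbation is subleading, so the equilibrium measure is still the semicircle $\mu^*$; at the $N$ rate of the edge LDP for the smallest eigenvalue it contributes only an $O(1)$ correction to the rate function, which still vanishes at $-2$ and is strictly positive below. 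Hence $\mathbb{Q}_N^v(\lambda_{\min} < -2-\eta) \leq e^{-c(\eta)N}$ for every $\eta>0$, uniformly in $v$ on any compact subset of $(2,\infty)$. Since $v \geq t_1$, on the event $E$ we have $\lambda_{\min}(\mathbf{M}) \leq t_1-2-\delta-v \leq -2-\delta$, and the bound with $\eta = \delta$ yields \eqref{eq:48}.

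I expect the main obstacle to be making the uniform-in-$v$ edge LDP for $\mathbb{Q}_N^v$ rigorous. While the $N^2$-scale empirical-measure LDP with smooth potentials is classical, the $N$-scale edge LDP under the specific perturbation $-\frac{1}{N-1}\log|x+v|$ requires verification: one must confirm that the tilted equilibrium measure retains its left edge at $-2$ (rather than being pushed inward by a soft-edge effect) and that the associated rate function has a strictly positive left derivative at $-2$, uniformly in $v \in [t_1,t_2]$. This can be done either by directly adapting the Ben Arous--Dembo--Guionnet edge LDP to the bounded, smooth perturbation, or by invoking a general LDP for $\beta$-ensembles with continuous potentials and using a finite $\varepsilon$-net over $[t_1,t_2]$ for uniformity. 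As the excerpt indicates, all of this is absorbed into the more general statement of \cite[Lemma 21]{2nd}.
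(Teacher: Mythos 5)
The paper supplies no proof of this lemma at all: inequality \eqref{eq:48} is quoted as a special case of \cite[Lemma 21]{2nd}, and the second assertion is introduced only by the word ``Therefore'' inside the statement. Your deduction of the second assertion from \eqref{eq:48} is correct and complete: $\det(\mathbf{M}+v\mathbf{I})<0$ forces a negative eigenvalue of $\mathbf{M}+v\mathbf{I}$, so for any $\delta_{0}\in(0,t_{1}-2)$ the event is contained in $\{\lambda_{min}<t_{1}-2-\delta_{0}\}$, and the identity $\left|\det\right|-\det=2\left|\det\right|\mathbf{1}_{\det<0}$ combined with \eqref{eq:48} gives $0\leq\mathbb{E}\{|\det(\mathbf{M}+v\mathbf{I})|\}-\mathbb{E}\{\det(\mathbf{M}+v\mathbf{I})\}\leq2e^{-cN}\mathbb{E}\{|\det(\mathbf{M}+v\mathbf{I})|\}$ uniformly in $v\in(t_{1},t_{2})$, which is the claim. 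So on the part of the lemma that is genuinely ``new'' relative to the citation, you are fully rigorous.

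For \eqref{eq:48} itself your tilted-Coulomb-gas route is a reasonable and genuinely different reformulation, but as you acknowledge it is only a sketch: the uniform-in-$v$ left-edge large deviation bound for $\mathbb{Q}_{N}^{v}$ \emph{is} the content of the inequality in this formulation, so asserting it does not yet constitute a proof, and both you and the paper ultimately fall back on \cite[Lemma 21]{2nd}. One statement in the sketch should be repaired: the tilt does not produce an ``$O(1)$ correction to the rate function'' at speed $N$ --- taken literally that would mean an $e^{O(N)}$ change in the probability and would contradict your next clause that the rate function still vanishes at $-2$. The correct accounting is that a single outlier at $y$ picks up only the bounded factor $|y+v|$, while the bulk factor $\exp\{(N-1)\langle L_{N-1},\log|\cdot+v|\rangle\}$ is common to the constrained integral and to the normalization to leading exponential order; hence the tilt shifts the log-probability by $O(1)$ and the rate function by $O(1/N)$, and the unperturbed left-edge rate function (strictly positive below $-2$, uniformly for $v$ in a compact subset of $(2,\infty)$) survives. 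With that correction the heuristic is sound, but a self-contained proof still requires either the perturbed edge LDP you describe or the concentration argument of \cite[Lemma 21]{2nd}.
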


From our assumption on $J_{N}$, there exist $t_{2}>t_{1}>2$ such
that for large enough $N$ and for any $u\in D_{N}$, $-\frac{\gamma_{p}}{\sqrt{N\left(N-1\right)}}u\in\left(t_{1},t_{2}\right)$.
Therefore  (\ref{eq:49}) still holds, uniformly in $u\in D_{N}$,
if we remove the absolute value and multiply the expectation by $\left(1+o\left(1\right)\right)$,
as $N\to\infty$.

Let $p_{N}\left(x\right)$ be the Hermite polynomials, with the normalization
\[
\int p_{N}\left(x\right)p_{M}\left(x\right)e^{-x^{2}}dx=\delta_{NM}.
\]
Corollary 11.6.3 of \cite{RFG} states that
\begin{equation}
\left(N-1\right)^{\frac{N-1}{2}}\mathbb{E}\left\{ \det\left(\mathbf{M}-v\mathbf{I}\right)\right\} =\left(-1\right)^{N-1}\pi^{1/4}\sqrt{\left(N-1\right)!}p_{N-1}\left(\sqrt{\frac{N-1}{2}}v\right).\label{eq:51}
\end{equation}
From \cite[Theorem 2.2]{DKMVZ99} (see \cite[p. 20]{DeiftGioev} for
a statement more convenient for our needs; in our case, $V\left(x\right)=x^{2}$,
$V_{N}\left(x\right)=x^{2}/2$, $c_{N}=\sqrt{2N}$, $d_{N}=0$, $h_{N}\left(x\right)=4$,
$m=1$) and some calculus, $p_{N}\left(x\right)$ satisfy the following
Plancherel-Rotach type asymptotics. For any $\delta>0$, uniformly
in $x<-\left(1+\delta\right)$, as $N\to\infty$,
\begin{align}
p_{N}\left(\sqrt{2N}x\right) & =\frac{\left(-1\right)^{N-1}}{\sqrt{4\pi\sqrt{2N}}}\exp\left\{ N\left(\Omega\left(2x\right)+\frac{1}{2}\right)\right\} \tilde{h}\left(x\right)\left(1+O\left(\frac{1}{N}\right)\right),\label{eq:53}
\end{align}
where $\Omega(x)$ and $\tilde{h}(x)$  are defined in \eqref{eq:Omega} and \eqref{htilde}, respectively.
Note that, for small enough $\delta>0$, for large enough $N$, if
$u\in D_{N}$, then 
\[
x_{u}=x_{u,N,p}\triangleq\frac{1}{2}\cdot\frac{\gamma_{p}}{\sqrt{N\left(N-1\right)}}u<-\left(1+\delta\right).
\]
Therefore, combining (\ref{eq:51}) and (\ref{eq:53}), we have that
\begin{align*}
\rho_{N}\left(u\right) & =\omega_{N}\left(\frac{p-1}{2\pi}\right)^{\frac{N-1}{2}}\frac{1}{\sqrt{2\pi N}}e^{-\frac{1}{2N}u^{2}}\pi^{1/4}\sqrt{\left(N-1\right)!}p_{N-1}\left(\sqrt{2\left(N-1\right)}x_{u}\right)\left(1+o\left(1\right)\right)\\
 & =\frac{1}{2\sqrt{2\pi}}\tilde{h}\left(x_{u}\right)\exp\left\{ \left(N-1\right)\Theta_{p}\left(\frac{u}{\sqrt{N\left(N-1\right)}}\right)-\frac{1}{2}\log N\right\} \left(1+o\left(1\right)\right),
\end{align*}
uniformly in $u\in D_{N}$, where $\Theta_p(x)$ is defined in \eqref{eq:Theta_p}.

Using a first order Taylor expansion for $\Theta_{p}$ around $-E_{0}$
(recall that $\Theta_{p}\left(-E_{0}\right)=0$ and $\Theta_p'(-E_0)=c_p$) and the fact that
\[
\frac{1}{\sqrt{\left(N-1\right)}}=\frac{1}{\sqrt{N}}\left(1+\frac{1}{2N}+O\left(\frac{1}{N^{2}}\right)\right),
\]
we have, uniformly in $u\in D_{N}$, 
\begin{align}
\rho_{N}\left(u\right) & =\frac{1}{2\sqrt{2\pi}}\tilde{h}\left(-\frac{1}{2}\gamma_{p}E_{0}\right)\exp\left\{ c_{p}\left(\left(u+E_{0}N\right)-\frac{1}{2}E_{0}\right)-\frac{1}{2}\log N\right\} \left(1+o\left(1\right)\right).\label{eq:52}
\end{align}

Now, note that, with $\nu_N(x)$ denoting the density function of $\xi_N$,
\[
\nu_{N}\left(x\right)=\left(1+\iota_{p}\right)^{-1}\rho_{N}\left(x+m_{N}\right).
\]
Substitution of $m_{N}$ and $K_{0}$, defined in (\ref{eq:m_N})
and (\ref{eq:C0}), in (\ref{eq:52}) completes the proof of Proposition
\ref{prop:intensity}.\qed

\section{\label{sec:prop-2nd}Proof of Proposition \ref{prop:2nd moment}}

The proposition is a direct consequence of the results of \cite{2nd}.
With $B\subset\mathbb R$ and $I_R\subset [-1,1]$, in \cite{2nd} the random variables $\mbox{Crt}_{N}\left(B\right)$
and $\left[\mbox{Crt}_{N}\left(B,I_{R}\right)\right]_{2}$ are defined
as follows. The former is the number of points $\boldsymbol{\sigma}$
on the sphere $\mathbb{S}^{N-1}\left(\sqrt{N}\right)$ such that $\boldsymbol{\sigma}$
is a critical point of $H_{N}$, with critical value $H_{N}\left(\boldsymbol{\sigma}\right)\in NB$.
The latter is the number of ordered pairs $\left(\boldsymbol{\sigma}_{1},\boldsymbol{\sigma}_{2}\right)$
of points on the sphere $\mathbb{S}^{N-1}\left(\sqrt{N}\right)$ such
that $\boldsymbol{\sigma}_{i}$ are critical points of $H_{N}$, with
critical values $H_{N}\left(\boldsymbol{\sigma}_{i}\right)\in NB$,
and the overlap $R\left(\boldsymbol{\sigma}_{1},\boldsymbol{\sigma}_{2}\right)$
belongs to $I_{R}$. Recall that for odd $p$, $H_{N}\left(-\boldsymbol{\sigma}\right)=-H_{N}\left(\boldsymbol{\sigma}\right)$,
and for even $p$, $H_{N}\left(-\boldsymbol{\sigma}\right)=H_{N}\left(\boldsymbol{\sigma}\right)$,
for any $\boldsymbol{\sigma}$. Also, for any $B\subset\left(-\infty,0\right)$,
clearly $NB\cap\left(-NB\right)$ is empty. From these facts we have that for such $B$, 
\begin{equation}
\left[\mbox{Crt}_{N}\left(B,\left(-1,1\right)\right)\right]_{2}=\left(\mbox{Crt}_{N}\left(B\right)\right)^{2}-\left(1+\iota_{p}\right)\mbox{Crt}_{N}\left(B\right).\label{eq:59}
\end{equation}

For any small enough $\delta,\epsilon>0$, denoting $B_{\delta}=\left(-E_{0}-\delta,-E_{0}+\delta\right)$
and $I_{\epsilon}=\left(-1,1\right)\setminus\left(-\epsilon,\epsilon\right)$,
Theorem 5 and Lemma 6 of \cite{2nd}  yield
\begin{equation}
\lim_{N\to\infty}\frac{1}{N}\log\left(\mathbb{E}\left\{ \left[\mbox{Crt}_{N}\left(B_{\delta},I_{\epsilon}\right)\right]_{2}\right\} \right)\leq\sup_{r\in I_{\epsilon}}\sup_{u\in B_{\delta}}\Psi_{p}\left(r,u\right),\label{eq:57}
\end{equation}
where $\Psi_{p}\left(r,u\right)=\Psi_{p}\left(r,u,u\right)$ and $\Psi_{p}\left(r,u_{1},u_{2}\right)$
is given in equation (2.7) of \cite{2nd}. By part (1) of Lemma 7 of \cite{2nd}, 
\[
\sup_{r\in I_{\epsilon}}\Psi_{p}\left(r,-E_0\right)<\Psi_{p}\left(0,-E_0\right)=2\Theta_p(-E_0)=0.
\]

The function  $\Psi_{p}(r,u)$ is continuous on $(-1,1)\times\mathbb R$ and can be extended to a continuous function on $[-1,1]\times\mathbb R$. Thus, with $\epsilon$ fixed, for small enough $\delta>0$ we also have that 
\[
\sup_{r\in I_{\epsilon}}\sup_{u\in B_{\delta}}\Psi_{p}\left(r,-E_0\right)<\Psi_{p}\left(0,-E_0\right)=2\Theta_p(-E_0)=0,
\]
and the left-hand side of (\ref{eq:57}) is negative.

Let $J_{N}$ be a sequence of intervals as in the statement of the
proposition, set $A\left(N\right)=\left(J_{N}+m_{N}\right)/N$ and note that
\[
\mathbb{E}\left\{ \xi_{N}\left(J_{N}\right)\right\} =\left(1+\iota_{p}\right)^{-1}\mathbb{E}\left\{ \mbox{Crt}_{N}\left(A\left(N\right)\right)\right\} .
\]
By (\ref{eq:59}),
for any $\epsilon>0$ and small enough $\delta>0$, for large enough
$N$, $A\left(N\right)\subset B_{\delta}$ and
\begin{align}
\mathbb{E}\left\{ \left(\xi_{N}\left(J_{N}\right)\right)^{2}-\xi_{N}\left(J_{N}\right)\right\}  & =\left(1+\iota_{p}\right)^{-2}\mathbb{E}\left\{ \left[\mbox{Crt}_{N}\left(A\left(N\right),\left(-1,1\right)\right)\right]_{2}\right\} ,\nonumber \\ 
\mathbb{E}\left\{ \left[\mbox{Crt}_{N}\left(A\left(N\right),I_{\epsilon}\right)\right]_{2}\right\}  & \leq\mathbb{E}\left\{ \left[\mbox{Crt}_{N}\left(B_{\delta},I_{\epsilon}\right)\right]_{2}\right\} .
\end{align}
From our assumption on $J_{N}$ and Proposition \ref{prop:intensity},
for large enough $N$, $\mathbb{E}\left\{ \mbox{Crt}_{N}\left(A\left(N\right)\right)\right\} >c/2$.
Thus, since the left-hand side of (\ref{eq:57}) is negative, the
proposition will follow if we can show that
\begin{equation}
\lim_{N\to\infty}\frac{\mathbb{E}\left\{ \left[\mbox{Crt}_{N}\left(A\left(N\right),\left(-\epsilon_N,\epsilon_N\right)\right)\right]_{2}\right\} }{\left(\mathbb{E}\left\{ \mbox{Crt}_{N}\left(A\left(N\right)\right)\right\} \right)^{2}}\leq 1,\label{eq:61}
\end{equation}
for any sequence $\epsilon_N>0$ such that $\epsilon_N\to 0$ as $N\to\infty$.
This follows from Lemma 19 of \cite{2nd}.

\section{\label{sec:auxres}Auxiliary results and the proof of Theorem \ref{thm:subseq}}

We begin with four lemmas which will be used in the proof
of Theorem \ref{thm:subseq}. In the proof of the latter 
we shall relate the (sequence of) extremal processes $\xi_{N}^{+}$
of the perturbed Hamiltonian $H_{N}^{+}$ to those of $H_{N}$, namely
$\xi_{N}$. We will show that they, in some sense, asymptotically
differ by adding independent increments to the corresponding atoms,
at least on bounded intervals around $m_{N}$. The approximation Lemma \ref{lem:apx.Ligg}
will be used to conclude from those relations - concerning the restrictions
of non-limiting processes to bounded intervals - a similar relation
between the limiting process $\xi_{\infty}$ and $\xi_{\infty}^{+}$.
Its proof is simple but technical and is therefore deferred to Appendix
I. For any measure $\mu$ on $\mathbb{R}$, let $\left.\mu\right|_{A}\left(B\right)=\mu\left(A\cap B\right)$
denote the restriction of $\mu$ to a measurable set $A$. 
\begin{lem}
\label{lem:apx.Ligg}Let $\eta=\sum_{i}\delta_{\eta_{i}}$ and $\eta_{N}=\sum_{i}\delta_{\eta_{N,i}}$,
$N\geq1$, be point processes such that $\eta_{N}\stackrel{d}{\to}\eta$.
Assume that the intensity function of $\eta$ is bounded from above
by $e^{ax}$, for some $a>0$. Let $X_{i}$ be a sequence of random
variables independent of $\eta$ and $\eta_{N}$, such that $\sup_i \mathbb P\{X_i\leq x\}\leq v(x)$, for some function $v(x)$ satisfying $\int v(-x) e^{ax}dx <\infty$.  Suppose that $\eta_{N}^{+}=\sum_{i}\delta_{\eta_{N,i}^{+}}$
is an additional sequence of point processes and, for any $L>0$ and
$N\geq1$, $\left(\bar{X}_{N,i}\left(L\right)\right)_{i\geq1}$ is
a sequence of random variables such that:
\begin{enumerate}
\item For any $\kappa>0$, assuming $\left.\eta_{N}\right|_{\left[-L,L\right]}=\sum_{i=1}^{Q_{N,L}}\delta_{\eta_{N,L,i}}$,
with $Q_{N,L}=\eta_{N}\left(\left[-L,L\right]\right)$, and defining
$\bar{\eta}_{N,L}\triangleq\sum_{i=1}^{Q_{N,L}}\delta_{\eta_{N,L,i}+\bar{X}_{N,i}\left(L\right)}$,
we have 
\begin{equation}
\lim_{L\to\infty}\liminf_{N\to\infty}\mathbb{P}\left\{ \left.\eta_{N}^{+}\right|_{\left[-\kappa,\kappa\right]}=\left.\bar{\eta}_{N,L}\right|_{\left[-\kappa,\kappa\right]}\right\} =1.\label{eq:15}
\end{equation}

\item \label{enu:2}For any fixed $m$ and $L$, $\left(\bar{X}_{N,i}\left(L\right)\right)_{i=1}^{m}$
converges in probability to $\left(X_{i}\right)_{i=1}^{m}$, as $N\to\infty$.
\end{enumerate}

Then, 
\[
\eta_{N}^{+}\stackrel[{\scriptstyle N\to\infty}]{d}{\to}\eta_{\infty}^{+}\triangleq\sum_{i}\delta_{\eta_{i}+X_{i}},
\]
and $\eta_{\infty}^{+}$ is locally finite.

\end{lem}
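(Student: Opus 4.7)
The plan is to prove the lemma in two steps: first establish local finiteness of $\eta_\infty^+$, and then prove $\eta_N^+\to\eta_\infty^+$ in the vague topology by approximating through the auxiliary objects supplied by the two hypotheses. Both parts will use the intensity bound on $\eta$ together with the tail bound on the $X_i$'s, tied together by the integrability condition $\int v(-x)e^{ax}dx<\infty$.

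For local finiteness, fix $\kappa>0$. Since the $X_i$ are independent of $\eta$ and $\mathbb{P}\{X_i\in[-\kappa-\eta_i,\kappa-\eta_i]\mid\eta\}\leq v(\kappa-\eta_i)$, Campbell's formula together with the intensity bound $\lambda(y)\leq e^{ay}$ gives
$$\mathbb{E}\bigl[\eta_\infty^+([-\kappa,\kappa])\bigr]=\mathbb{E}\Bigl[\sum_i\mathbf{1}\{\eta_i+X_i\in[-\kappa,\kappa]\}\Bigr]\leq\int v(\kappa-y)e^{ay}dy=e^{a\kappa}\int v(-x)e^{ax}dx<\infty,$$
after the substitution $u=\kappa-y$ followed by $x=-u$. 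Hence $\eta_\infty^+([-\kappa,\kappa])<\infty$ almost surely.

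For the convergence, in the vague topology it suffices to prove $\eta_N^+(f)\to\eta_\infty^+(f)$ in distribution for every non-negative continuous $f$ with $\mathrm{supp}(f)\subset[-\kappa,\kappa]$. Introduce the intermediate process $\eta_{\infty,L}^+\triangleq\sum_{i:\,|\eta_i|\leq L}\delta_{\eta_i+X_i}$ and decompose, for any $\epsilon>0$,
$$\mathbb{P}\bigl\{|\eta_N^+(f)-\eta_\infty^+(f)|>3\epsilon\bigr\}\leq P_1(N,L)+P_2(N,L)+P_3(L),$$
where $P_1=\mathbb{P}\{\eta_N^+(f)\neq\bar\eta_{N,L}(f)\}$, $P_2=\mathbb{P}\{|\bar\eta_{N,L}(f)-\eta_{\infty,L}^+(f)|>\epsilon\}$, and $P_3=\mathbb{P}\{|\eta_{\infty,L}^+(f)-\eta_\infty^+(f)|>2\epsilon\}$; the order of limits is $\limsup_{L\to\infty}\limsup_{N\to\infty}$. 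Hypothesis 1 directly controls $P_1$. For $P_3$, by Markov's inequality and the same Campbell computation,
$$\mathbb{E}\bigl|\eta_\infty^+(f)-\eta_{\infty,L}^+(f)\bigr|\leq\|f\|_\infty\Bigl(\int_L^\infty v(\kappa-y)e^{ay}dy+\int_{-\infty}^{-L}e^{ay}dy\Bigr),$$
and both integrals vanish as $L\to\infty$, the first precisely by $\int v(-x)e^{ax}dx<\infty$ and the second because $a>0$.

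The main technical obstacle is $P_2$. Here I would invoke Skorokhod's representation theorem for the Polish space of locally finite counting measures under the vague topology to assume that $\eta_N\to\eta$ almost surely, and restrict to $L$ among the (a.e.) continuity points of $\eta$, so that almost surely for large $N$ the number of atoms $Q_{N,L}$ stabilizes to $Q_L$ and the atoms $\eta_{N,L,i}$ converge coordinatewise to $\eta_{L,i}$. The fluctuation of $Q_L$ is delicate because hypothesis 2 only gives convergence of $(\bar X_{N,i}(L))_{i=1}^m$ for fixed $m$; I would handle this by conditioning on $\{Q_L\leq m\}$, applying hypothesis 2 together with continuity of $f$ to conclude $\bar\eta_{N,L}(f)\to\eta_{\infty,L}^+(f)$ in probability on this event, and then sending $m\to\infty$ using $\mathbb{P}\{Q_L>m\}\to 0$, which holds since $\mathbb{E}[Q_L]\leq\int_{-L}^L e^{ay}dy<\infty$. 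Collecting the three bounds and sending $L\to\infty$ after $N\to\infty$ concludes the argument. Apart from this bookkeeping, every step is a direct consequence of the integrability assumption on $v$.
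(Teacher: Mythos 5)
Your treatment of local finiteness matches the paper's argument and is correct. For convergence, however, you take a genuinely different route: you work at the level of the realization, coupling $\eta_N$ to $\eta$ via Skorokhod, whereas the paper works with Laplace functionals $\mathcal{L}_\zeta[g]=\mathbb E\{e^{-\langle g,\zeta\rangle}\}$ and never couples $\eta_N$ to its weak limit. The key device in the paper is a \emph{different} intermediate process $\hat\eta_{N,L}\triangleq\sum_{i=1}^{Q_{N,L}}\delta_{\eta_{N,L,i}+X_i}$, built from $\eta_N$'s atoms but with the limiting shifts $X_i$. This splits your $P_2$ into two conceptually distinct pieces: $\bar\eta_{N,L}(g)-\hat\eta_{N,L}(g)\to 0$ is a convergence in probability that lives entirely on the original probability space (bounded on $B\cap D$ by $\sum_{i\le m_0}\omega_g(|\bar X_{N,i}(L)-X_i|)$ via hypothesis (2)), and $\hat\eta_{N,L}\to\eta_\infty^+|_{[-L,L]}$ is a purely distributional convergence following from $\eta_N|_{[-L,L]}\stackrel{d}{\to}\eta|_{[-L,L]}$ (at stochastic continuity points $L$) together with independence of the $X_i$.

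Your fused $P_2$ swaps $\bar X_{N,i}\leftrightarrow X_i$ and $\eta_{N,L,i}\leftrightarrow\eta_{L,i}$ simultaneously, which is what forces the Skorokhod step, and here there is a genuine gap rather than bookkeeping. Applying Skorokhod to the marginal law of $\eta_N$ is not enough: the objects entering $P_1$ and $P_2$ jointly involve $\eta_N$, the labeled sequence $(\bar X_{N,i}(L))_i$ paired with a chosen atom enumeration, the $X_i$, and (through the event $B$) $\eta_N^+$. To carry the decomposition to a new probability space you would need Skorokhod for the \emph{joint} law of this tuple, which in turn requires establishing joint distributional convergence (e.g.\ that $(\eta_N,X,\bar X_N(L))\to(\eta,X,X)$ and then conditionally constructing $\eta_N^+$) — none of which appears in your sketch. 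Moreover, Skorokhod realizes the point measure $\tilde\eta_N$, not the labeled tuple of atoms; hypothesis (2) pairs $\bar X_{N,i}$ with the $i$th atom in a particular enumeration, and that pairing need not survive the change of space unless you pass to a canonical ordering and argue that the hypothesis is stable under the re-labeling. These obstacles can likely be overcome, but they are real, and the paper's choice of intermediate $\hat\eta_{N,L}$ (rather than your $\eta^+_{\infty,L}$) is precisely what lets it avoid them.
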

The following lemma establishes that, with high probability, for any
critical point of $H_{N}$ with critical value in a bounded set around
$m_{N}$ there is a critical point of $H_{N}^{+}$ within microscopic
distance on the sphere. Moreover, it expresses, up to a lower order
term, the difference of the two critical values in terms of the perturbation
field $H_{N}^{\prime}$ evaluated at the critical point of $H_{N}$.
We denote the set of critical points of $H_{N}$ with critical value
at distance $L$ at most from $m_{N}$ by 
\begin{equation}
\mathscr{C}_{N}\left(L\right)\triangleq\left\{ \boldsymbol{\sigma}\in\mathscr{C}_{N}:\,H_{N}\left(\boldsymbol{\sigma}\right)\in\left[m_{N}-L,\,m_{N}+L\right]\right\} ,\label{eq:25}
\end{equation}
and define $\mathscr{C}_{N}^{+}\left(L\right)$ similarly with $H_N^+(\boldsymbol{\sigma})$.
\begin{lem}
\label{lem:5}For any $\alpha<1/2$, 
there exists a sequence $\rho_{N}>0$ with $\lim_{N\to\infty}\rho_{N}=0$,
such that the following holds. For any fixed $L$, there exist a mapping
$\mathscr{G}_{N,L}:\,\mathscr{C}_{N}\left(L\right)\to\mathscr{C}_{N}^{+}$
and an event $\mathcal{E}_{N,L}^{1}$, such that on $\mathcal{E}_{N,L}^{1}$
for any $\boldsymbol{\sigma}\in\mathscr{C}_{N}\left(L\right)$, denoting
$\boldsymbol{\sigma}'=\mathscr{G}_{N,L}\left(\boldsymbol{\sigma}\right)$,
we have 
\begin{align}
&(1)\mbox{ Value perturbation:\,\,} \quad \left|H_{N}^{+}\left(\boldsymbol{\sigma}'\right)-\left(H_{N}\left(\boldsymbol{\sigma}\right)+\frac{1}{\sqrt{N}}H_{N}^{\prime}\left(\boldsymbol{\sigma}\right)-C_{0}\right)\right|\leq\rho_{N},\nonumber \\
&(2)\mbox{ Location perturbation:\,\,} \qquad R\left(\boldsymbol{\sigma},\boldsymbol{\sigma}'\right)\geq1-N^{-2\alpha},\label{eq:28}
\end{align}
and such that 
\[
\lim_{N\to\infty}\mathbb{P}\left\{ \mathcal{E}_{N,L}^{1}\right\} =1.
\]

\end{lem}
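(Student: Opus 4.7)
The plan is to conduct a deterministic local analysis around each $\boldsymbol{\sigma}\in\mathscr{C}_N(L)$, on a high-probability event $\mathcal{E}_{N,L}^1$ packaging three quantitative bounds. Working in an orthonormal frame on $T_{\boldsymbol{\sigma}}\mathbb{S}^{N-1}(\sqrt N)$, we identify a geodesic neighborhood with a Euclidean ball via the exponential map and expand
\begin{equation*}
H_N^+(\boldsymbol{\sigma}+\tau)=H_N(\boldsymbol{\sigma})+\tfrac12 \tau^{T}\nabla^{2}H_N(\boldsymbol{\sigma})\,\tau+\tfrac{1}{\sqrt N}H_N'(\boldsymbol{\sigma})+\tfrac{1}{\sqrt N}\nabla H_N'(\boldsymbol{\sigma})^{T}\tau+\mathcal R(\tau),
\end{equation*}
where $\mathcal R$ gathers the cubic-in-$\tau$ part of $H_N$ and the quadratic-in-$\tau$ part of $N^{-1/2}H_N'$. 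The critical-point equation $\nabla H_N^+(\boldsymbol{\sigma}+\tau)=0$ reads $\nabla^{2}H_N(\boldsymbol{\sigma})\,\tau=-N^{-1/2}\nabla H_N'(\boldsymbol{\sigma})+O(\|\tau\|^{2})$; when $\|(\nabla^{2}H_N)^{-1}\|=O(1)$ and $\|\nabla H_N'(\boldsymbol{\sigma})\|=O(\sqrt N)$ the right-hand side is $O(1)$, and a Banach-contraction/inverse-function-theorem argument then produces a unique solution $\tau^{\star}$ with $\|\tau^{\star}\|=O(1)$. We set $\mathscr{G}_{N,L}(\boldsymbol{\sigma})=\boldsymbol{\sigma}+\tau^{\star}$ (projected back to the sphere).

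The event $\mathcal{E}_{N,L}^1$ enforces, simultaneously at every $\boldsymbol{\sigma}\in\mathscr{C}_N(L)$: (i) $\lambda_{\min}(\nabla^{2}H_N(\boldsymbol{\sigma}))\geq c=c(E_0)>0$; (ii) $\|\nabla H_N'(\boldsymbol{\sigma})\|=O(\sqrt N)$; and (iii) uniform $C^{3}$-type bounds on $H_N$ and $C^{2}$-type bounds on $H_N'$ on the tangent ball of Euclidean radius $N^{1/2-\alpha}$, sufficient to control $\mathcal R$. For (i), the Kac--Rice conditioning (as in \cite[Lemma 3.2]{A-BA-C}, underlying \eqref{eq:49}) distributes the Hessian at a critical point of value $u\approx-E_0N$ as $\sqrt{p(p-1)}\,(\mathbf M-\gamma_p u/\sqrt{N(N-1)}\,\mathbf I)$ for an $(N-1)$-dimensional GOE matrix $\mathbf M$; the shift equals $\approx\gamma_pE_0>2$, so Tracy--Widom-type edge estimates yield positivity with overwhelming probability. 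Conditions (ii) and (iii) follow from Borell--TIS applied to $H_N'$ and its derivatives, together with standard Gaussian regularity. Uniformity over the random set $\mathscr{C}_N(L)$ is enforced by a Kac--Rice first-moment bound on the expected number of critical points in the window $[m_N-L,m_N+L]$ at which any of (i)--(iii) fails, which should be $o(1)$ in view of Proposition \ref{prop:intensity}. This uniformity is the principal obstacle of the proof, as the three bounds must hold at every member of a random discrete set.

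Given $\mathcal{E}_{N,L}^1$, assertion (2) is immediate from $\|\tau^{\star}\|=O(1)$, yielding $R(\boldsymbol{\sigma},\boldsymbol{\sigma}')=1-O(1/N)$, far stronger than the required $1-N^{-2\alpha}$ for any $\alpha<1/2$. For (1), substitute $\tau^{\star}=-N^{-1/2}(\nabla^{2}H_N)^{-1}\nabla H_N'(\boldsymbol{\sigma})(1+o(1))$ back into the expansion. The linear-in-$\tau$ cross term and the quadratic-in-$\tau$ diagonal term combine (via the identity $\min_{\tau}(\tfrac12\tau^{T}A\tau+b^{T}\tau)=-\tfrac12 b^{T}A^{-1}b$) to
\begin{equation*}
H_N^+(\boldsymbol{\sigma}')=H_N(\boldsymbol{\sigma})+\tfrac{1}{\sqrt N}H_N'(\boldsymbol{\sigma})-\tfrac{1}{2N}\nabla H_N'(\boldsymbol{\sigma})^{T}(\nabla^{2}H_N)^{-1}\nabla H_N'(\boldsymbol{\sigma})+o(1).
\end{equation*}
Because $H_N'$ is independent of $H_N$, $\nabla H_N'(\boldsymbol{\sigma})$ is an independent Gaussian vector on $T_{\boldsymbol{\sigma}}\mathbb{S}^{N-1}(\sqrt N)$ with covariance $p\cdot\mathbf I$ (by direct differentiation of \eqref{eq:overlap}); a Hanson--Wright concentration inequality then replaces the quadratic form by $\tfrac{p}{2N}\mathrm{tr}((\nabla^{2}H_N)^{-1})$. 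The Wigner semicircle law for $\mathbf M$ together with the edge control from (i) (ensuring integrability of $1/\lambda$ against the spectral measure) identifies the limit as $\tfrac{\gamma_p}{2}\int(\gamma_pE_0-x)^{-1}d\mu^*(x)=C_0$, matching \eqref{eq:C0}.
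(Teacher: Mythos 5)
Your proposal is correct in its overall architecture and matches the paper's strategy in all the essential steps: the quadratic approximation of $H_N$ and linear approximation of $H_N'$ near each critical point, the explicit formula $\tau^{\star}\approx -N^{-1/2}(\nabla^{2}H_N)^{-1}\nabla H_N'$ for the displacement, the identification of the value shift as $N^{-1/2}H_N'(\boldsymbol{\sigma})-\tfrac{p}{2N}\mathrm{tr}((\nabla^{2}H_N)^{-1})+o(1)$ with the trace converging to $C_0$ via Corollary \ref{cor:1} and Wigner's law, and the enforcement of uniformity over the random set $\mathscr{C}_N(L)$ by a Kac--Rice first-moment bound on the expected number of critical points at which any of the required estimates fails (this is exactly the paper's notion of ``good'' critical points and Lemmas \ref{lem:good_suff}--\ref{lem:good_whp}). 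The one step where you genuinely diverge is the production of the true critical point of $H_N^+$ from the approximate one: you invoke an inverse-function-theorem/contraction argument, which requires uniform control of $\nabla^{3}H_N$ and $\nabla^{2}H_N'$ over the whole neighborhood, and moreover under the Kac--Rice conditioning at a critical point of $H_N$ (these fields are not independent of the location of the critical point, so your appeal to ``Borell--TIS plus standard Gaussian regularity'' hides a conditioning issue that the paper resolves for its scalar error fields via Anderson's inequality). The paper instead takes a softer route: it shows that on the good event the value of $\bar f^{+}_{\boldsymbol{\sigma}}$ at the approximate minimizer $Y_{\boldsymbol{\sigma}}$ is strictly below its infimum over the boundary of the ball of radius $N^{-\alpha}$, so the minimum over the closed ball is attained at an interior critical point; this requires only sup-norm control of the Taylor remainders $\bar f^{(1)},\bar f^{(2)}$ (obtained from Dudley's entropy bound and Borell--TIS), not of their derivatives. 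The trade-off is that your route, if carried out, localizes $\boldsymbol{\sigma}'$ to within $O(1)$ of $\boldsymbol{\sigma}$ and gives uniqueness in the neighborhood for free, whereas the paper only localizes to within $N^{1/2-\alpha}$ (hence the weaker overlap bound $1-N^{-2\alpha}$ in (2)) and must separately establish injectivity of $\mathscr{G}_{N,L}$ via Lemma \ref{lem:separation}. Both arguments are viable; the paper's is lighter on derivative estimates, yours is sharper in its conclusion but leaves the hardest technical work (the conditioned, uniform-over-the-ball derivative bounds) only sketched.
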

The proof of Lemma \ref{lem:5} is rather lengthy; Section \ref{sec:pfProp5}
is devoted to it. Note that no claim was made in Lemma \ref{lem:5}
that the mapping $\mathscr{G}_{N,L}$ is one-to-one. The following
lemma says that the near minimum critical points of $H_{N}$ are far
apart from each other which, combined with point (2) of Lemma \ref{lem:5},
allows us to conclude that indeed $\mathscr{G}_{N,L}$ is one-to-one,
with high probability. Another important consequence of the following lemma
is that the perturbations $H_{N}^{\prime}\left(\boldsymbol{\sigma}\right)/\sqrt{N}$
appearing in point (1) of Lemma \ref{lem:5} are asymptotically independent.
\begin{lem}
\label{lem:separation}There exists a sequence $r_{N}\in\left(0,1\right)$
with $r_{N}\to0$, such that for any $L>0$, 
\begin{equation}
\lim_{N\to\infty}\mathbb{P}\left\{ \forall\boldsymbol{\sigma}_{1}\neq\pm\boldsymbol{\sigma}_{2}\in\mathscr{C}_{N}\left(L\right):\,\left|R\left(\boldsymbol{\sigma}_{1},\boldsymbol{\sigma}_{2}\right)\right|\leq r_{N}\right\} =1,\label{eq:26}
\end{equation}
and similarly for $\mathscr{C}_{N}^{+}\left(L\right)$.
\end{lem}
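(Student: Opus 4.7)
The plan is to bound, via the paired Kac-Rice first-moment machinery of \cite{2nd}, the expected number of ordered pairs of critical points in $\mathscr{C}_N(L)$ (respectively $\mathscr{C}_N^+(L)$) that are distinct up to sign and have overlap of absolute value at least $r_N$; Markov's inequality will then deliver the conclusion. With $A(N) = \bigl([m_N - L, m_N + L]\bigr)/N$ and $I_N = \{r \in (-1, 1) : |r| \geq r_N\}$, it suffices to show $\mathbb{E}\{[\mathrm{Crt}_N(A(N), I_N)]_2\} \to 0$ for some sequence $r_N \to 0$. I would split $I_N = I_N^{\mathrm{bulk}} \cup I_N^{\mathrm{near}}$ with $I_N^{\mathrm{bulk}} = [-1+\delta, -r_N] \cup [r_N, 1-\delta]$ and $I_N^{\mathrm{near}} = (-1, -1+\delta) \cup (1-\delta, 1)$, for a small fixed $\delta > 0$, and treat the two ranges differently.

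For $I_N^{\mathrm{bulk}}$, I would apply Theorem 5 of \cite{2nd} to obtain
\[
\tfrac{1}{N} \log \mathbb{E}\bigl\{[\mathrm{Crt}_N(A(N), I_N^{\mathrm{bulk}})]_2\bigr\} \leq \sup_{r \in I_N^{\mathrm{bulk}},\, u \in A(N)} \Psi_p(r, u) + o(1).
\]
Since $A(N) \to \{-E_0\}$, continuity of $\Psi_p$ on $[-1, 1] \times \mathbb{R}$ reduces the right-hand side to $\sup_{|r| \in [r_N, 1-\delta]} \Psi_p(r, -E_0) + o(1)$; by Lemma 7(1) of \cite{2nd} this supremum equals $-\beta(r_N, \delta) < 0$, with $\beta(r_N, \delta) \to 0$ as $r_N \to 0$ (for $\delta$ fixed). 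A diagonal selection will yield a sequence $r_N \to 0$ satisfying $N \beta(r_N, \delta) \to \infty$ for every $\delta > 0$, which makes this contribution exponentially small.

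For $I_N^{\mathrm{near}}$, a pair with overlap in this range consists of two distinct critical points geodesically close on the sphere (up to a sign flip, since $|r|$ is close to $1$). Such pairs must be ruled out by local isolation. Conditional on $\boldsymbol{\sigma}$ being a critical point, the Hessian of $H_N$ at $\boldsymbol{\sigma}$ is in law a GOE matrix shifted by a constant multiple of $H_N(\boldsymbol{\sigma})/\sqrt{N}$, so when $H_N(\boldsymbol{\sigma}) \in [m_N - L, m_N + L]$ the spectrum is pushed outside the bulk $[-2, 2]$ (since $\gamma_p E_0 > 2$) and all eigenvalues exceed $c \sqrt{N}$ in absolute value with probability tending to $1$, by Lemma \ref{lem:19} together with standard GOE edge tail bounds. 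A Taylor expansion of $\nabla H_N$ around $\boldsymbol{\sigma}$, with the remainder controlled by tail bounds on the third-derivative tensor of $H_N$, then forces $\nabla H_N$ to be non-vanishing in a fixed-radius geodesic ball around $\boldsymbol{\sigma}$. A union bound over $\mathscr{C}_N(L)$, whose expected cardinality is $O(1)$ by Proposition \ref{prop:intensity}, rules out pairs in $I_N^{\mathrm{near}}$ with probability going to $1$.

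The main obstacle will be this near-trivial step: the uniform local-isolation estimate, while qualitatively standard, demands quantitative control of both the Hessian spectrum and the Taylor remainder across all critical points simultaneously. This is exactly the type of local analysis that Section \ref{sec:pfProp5} develops for the proof of Lemma \ref{lem:5}, and I would reuse those tools. For the perturbed process $\mathscr{C}_N^+(L)$, the argument transfers verbatim via the distributional identity $H_N^+ \stackrel{d}{=} s_N H_N$ with $s_N \to 1$: the paired Kac-Rice bound applies to $s_N H_N$ after absorbing $s_N$, and the near-trivial step is unchanged since $s_N m_N - m_N = O(1)$.
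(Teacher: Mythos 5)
Your reduction to bounding $\mathbb{E}\{[\mathrm{Crt}_N(A(N),I)]_2\}$ via the paired Kac--Rice results of \cite{2nd} and then applying Markov plus a diagonal extraction of $r_N$ is exactly the paper's argument, and your handling of $\mathscr{C}_N^+(L)$ via $H_N^+\stackrel{d}{=}s_N H_N$ also matches. The difference is your split at $|r|=1-\delta$: the paper does not need it, because the bound \eqref{eq:57} coming from Theorem 5 \emph{and Lemma 6} of \cite{2nd} is stated for the full set $I_\epsilon=(-1,1)\setminus(-\epsilon,\epsilon)$, i.e.\ all the way up to $\pm1$, and $\Psi_p$ extends continuously to $[-1,1]\times\mathbb{R}$, so $\sup_{r\in I_\epsilon}\sup_{u\in B_\delta}\Psi_p(r,u)<0$ already covers your $I_N^{\mathrm{near}}$. (The endpoints $\pm1$ themselves are excluded from $I_\epsilon$, which is consistent with the lemma's exclusion of $\boldsymbol{\sigma}_1=\pm\boldsymbol{\sigma}_2$.) Thus the entire second half of your plan is an unnecessary detour.

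As written, that second half is also where your proposal has a genuine gap. The local-isolation step for overlaps in $(1-\delta,1)$ with $\delta$ \emph{fixed} is not a verbatim reuse of Section \ref{sec:pfProp5}: the estimates there (Lemmas \ref{lem:generic entropy bd} and \ref{lem:entropy_est}, and the events $\mathcal{A}_5$--$\mathcal{A}_8$) are calibrated to balls of radius $N^{-\alpha}$ in the normalized coordinates, whereas overlap $1-\delta$ corresponds to a ball of fixed radius $\approx\sqrt{2\delta}$, where the cubic Taylor remainder is of the same order in $N$ as the quadratic term (both are $O(N)$ after the $\sqrt N$ normalization) and is only smaller by a factor of the radius. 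Moreover, a value comparison does not by itself exclude nearby saddle points or minima whose critical values lie within the same window $[m_N-L,m_N+L]$, so one would need a genuine gradient-nonvanishing argument with uniform control over all of $\mathscr{C}_N(L)$. None of this is carried out, and it is all avoidable: drop the split and run your bulk argument on $I_\epsilon$ directly.
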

The last ingredient we need is to show that critical points of $H_{N}^{+}$
are covered in some sense by the image of $\mathscr{G}_{N,L}$. This
is the content of the following lemma.
\begin{lem}
\label{lem:cover}For any fixed $\kappa>0$, with $\mathscr{G}_{N,L}$
as defined in Lemma \ref{lem:5}, 
\begin{equation}
\lim_{L\to\infty}\liminf_{N\to\infty}\mathbb{P}\left\{ \mathscr{C}_{N}^{+}\left(\kappa\right)\subset\mathscr{G}_{N,L}\left(\mathscr{C}_{N}\left(L\right)\right)\right\} =1.\label{eq:27}
\end{equation}

\end{lem}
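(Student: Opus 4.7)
The plan is to construct, for $L$ sufficiently large, a reverse map $\mathscr{F}_{N,\kappa,L}\colon\mathscr{C}_{N}^{+}(\kappa)\to\mathscr{C}_{N}(L)$ such that, with probability tending to $1$ as first $N\to\infty$ and then $L\to\infty$, one has $\mathscr{G}_{N,L}(\mathscr{F}_{N,\kappa,L}(\boldsymbol{\tau}))=\pm\boldsymbol{\tau}$ for every $\boldsymbol{\tau}\in\mathscr{C}_{N}^{+}(\kappa)$. Since the extremal process in (\ref{eq:xi_N}) is defined modulo the parity factor $(1+\iota_{p})^{-1}$, an identification up to sign suffices, and yields (\ref{eq:27}).

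\emph{Construction of $\mathscr{F}_{N,\kappa,L}$.} I would rerun the local analysis underlying Lemma \ref{lem:5} (developed in Section \ref{sec:pfProp5}) with the roles of $H_{N}$ and $H_{N}^{+}$ exchanged, now viewing the identity $H_{N}=H_{N}^{+}-\frac{1}{\sqrt{N}}H_{N}^{\prime}$ as a perturbation of $H_{N}^{+}$. For $\boldsymbol{\tau}\in\mathscr{C}_{N}^{+}(\kappa)$, the criticality $\nabla H_{N}^{+}(\boldsymbol{\tau})=0$ gives $\nabla H_{N}(\boldsymbol{\tau})=-\frac{1}{\sqrt{N}}\nabla H_{N}^{\prime}(\boldsymbol{\tau})$, which is $O(1)$ in norm since $\|\nabla H_{N}^{\prime}(\boldsymbol{\tau})\|$ is of order $\sqrt{N}$, while $\nabla^{2}H_{N}(\boldsymbol{\tau})=\nabla^{2}H_{N}^{+}(\boldsymbol{\tau})-\frac{1}{\sqrt{N}}\nabla^{2}H_{N}^{\prime}(\boldsymbol{\tau})$ retains the controlled semicircle-plus-shift spectrum of $\nabla^{2}H_{N}^{+}(\boldsymbol{\tau})$ up to a lower-order correction. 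On the favorable event where these estimates hold uniformly over $\mathscr{C}_{N}^{+}(\kappa)$ --- which has probability tending to $1$ by Gaussian concentration combined with the tightness of $|\mathscr{C}_{N}^{+}(\kappa)|$ implied by Proposition \ref{prop:intensity} --- the implicit function theorem produces a unique critical point $\boldsymbol{\sigma}\in\mathscr{C}_{N}$ in a small ball around $\boldsymbol{\tau}$ with $R(\boldsymbol{\sigma},\boldsymbol{\tau})\geq 1-N^{-2\alpha}$, and a quadratic Taylor expansion yields the reverse value-perturbation
\[
H_{N}(\boldsymbol{\sigma})=H_{N}^{+}(\boldsymbol{\tau})-\frac{1}{\sqrt{N}}H_{N}^{\prime}(\boldsymbol{\tau})-C_{0}+o(1).
\]
Setting $\mathscr{F}_{N,\kappa,L}(\boldsymbol{\tau}):=\boldsymbol{\sigma}$, this gives $|H_{N}(\boldsymbol{\sigma})-m_{N}|\leq\kappa+C_{0}+\frac{1}{\sqrt{N}}|H_{N}^{\prime}(\boldsymbol{\tau})|+o(1)$. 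A Kac--Rice computation --- bounding the expected number of $\boldsymbol{\tau}\in\mathscr{C}_{N}^{+}(\kappa)$ with $\frac{1}{\sqrt{N}}|H_{N}^{\prime}(\boldsymbol{\tau})|>t$ using the joint Gaussian density of $(H_{N},H_{N}^{\prime})$ together with Proposition \ref{prop:intensity} --- shows that $\sup_{\boldsymbol{\tau}\in\mathscr{C}_{N}^{+}(\kappa)}\frac{1}{\sqrt{N}}|H_{N}^{\prime}(\boldsymbol{\tau})|$ is tight in $N$, so for $L$ chosen large enough $\mathscr{F}_{N,\kappa,L}(\boldsymbol{\tau})\in\mathscr{C}_{N}(L)$ with probability tending to $1$.

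\emph{Identification and main obstacle.} Combining the location perturbation of Lemma \ref{lem:5}, namely $R(\mathscr{F}_{N,\kappa,L}(\boldsymbol{\tau}),\mathscr{G}_{N,L}(\mathscr{F}_{N,\kappa,L}(\boldsymbol{\tau})))\geq 1-N^{-2\alpha}$, with the construction bound $R(\mathscr{F}_{N,\kappa,L}(\boldsymbol{\tau}),\boldsymbol{\tau})\geq 1-N^{-2\alpha}$, the overlap triangle inequality gives $R(\mathscr{G}_{N,L}(\mathscr{F}_{N,\kappa,L}(\boldsymbol{\tau})),\boldsymbol{\tau})\geq 1-O(N^{-2\alpha})$; both points lie in $\mathscr{C}_{N}^{+}(L')$ for some $L'=L+O(1)$ (by the value perturbation of Lemma \ref{lem:5} applied to $\mathscr{F}_{N,\kappa,L}(\boldsymbol{\tau})\in\mathscr{C}_{N}(L)$ together with the previous tightness), so Lemma \ref{lem:separation} applied to $H_{N}^{+}$ forces $\mathscr{G}_{N,L}(\mathscr{F}_{N,\kappa,L}(\boldsymbol{\tau}))=\pm\boldsymbol{\tau}$, which is what we need. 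The principal obstacle I foresee is the reverse-map construction itself: unlike the setup of Lemma \ref{lem:5}, the fields $H_{N}^{+}$ and $H_{N}^{\prime}$ are \emph{not} independent. The saving grace is that the construction underlying Lemma \ref{lem:5} in Section \ref{sec:pfProp5} is purely local, depending only on pointwise Gaussian estimates of gradients and Hessians together with Kac--Rice-type moment bounds, both of which remain tractable under the joint Gaussian law of $(H_{N},H_{N}^{\prime})$, so the same arguments go through with minor adaptations to handle this dependence.
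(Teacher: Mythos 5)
Your strategy---building an explicit inverse map $\mathscr{F}_{N,\kappa,L}$ by rerunning the local perturbation analysis with the roles of $H_{N}$ and $H_{N}^{+}$ interchanged, and then identifying $\mathscr{G}_{N,L}\circ\mathscr{F}_{N,\kappa,L}$ with the identity via Lemma \ref{lem:separation}---is entirely different from the paper's proof, which constructs no reverse map at all. The paper argues by matching first moments: on $\mathcal{E}_{N,L}^{\prime}$ the map $\mathscr{G}_{N,L}$ is injective with image in $\mathscr{C}_{N}^{+}$, so the point process of image values restricted to $[-\kappa,\kappa]$ is dominated by $\xi_{N}^{+}|_{[-\kappa,\kappa]}$, and by Markov's inequality applied to the difference it suffices to show that the expected number of image points converges to $\lim_{N}\mathbb{E}\{\xi_{N}^{+}([-\kappa,\kappa])\}=\frac{1}{c_{p}}e^{c_{p}E_{0}/2}(e^{c_{p}\kappa}-e^{-c_{p}\kappa})$. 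Because $H_{N}^{\prime}$ is independent of $H_{N}$, the intensity of the image process is (up to the $\rho_{N}$ error and a Cauchy--Schwarz bound on the contribution of $(\mathcal{E}_{N,L}^{\prime})^{c}$) the convolution $\mu_{N,L}\star\mathbb{P}\{W-C_{0}\in\cdot\}$, whose limit equals the target precisely by the identity $C_{0}=\frac{1}{2}E_{0}-\frac{1}{2}c_{p}$. No second pass through the hard local analysis is needed.

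The genuine gap in your route sits exactly where you flag the ``principal obstacle'': the assertion that the machinery of Section \ref{sec:pfProp5} ``goes through with minor adaptations'' when the perturbing field $-\frac{1}{\sqrt{N}}H_{N}^{\prime}$ is correlated with the base field $H_{N}^{+}$ is the entire content of the argument, and it is not established. Every conditional-law computation in Section \ref{sub:lemmas} (e.g.\ that $\nabla\bar{f}_{\mathbf{n}}^{\prime}(0)$ given the critical-point data is an unconditioned $N(0,pI)$ vector), the Kac--Rice bound of Lemma \ref{lem:K-R}, and the analogue of Lemma \ref{lem:good_whp} would have to be redone for the correlated pair $(H_{N}^{+},H_{N}^{\prime})$, tracking the $O(1/N)$ shifts in conditional means and covariances through all eight good-point events; moreover, the tightness of $\sup_{\boldsymbol{\tau}\in\mathscr{C}_{N}^{+}(\kappa)}\frac{1}{\sqrt{N}}|H_{N}^{\prime}(\boldsymbol{\tau})|$ cannot follow from tightness of $|\mathscr{C}_{N}^{+}(\kappa)|$ plus pointwise Gaussian bounds, since the points are field-dependent, and itself requires a Kac--Rice computation with the joint law. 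None of this is conceptually hopeless---your implicit consistency check is correct, since the displacement $Y$ is aligned with $\nabla\bar{f}^{\prime}$ so the increment of $\frac{1}{\sqrt{N}}H_{N}^{\prime}$ along it is $-2C_{0}+o(1)$ rather than $o(1)$, making the reverse value shift again $-C_{0}$---but as written the proposal replaces the paper's one-page moment computation with an unproven duplicate of its longest and most technical section.
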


Next, we explain how Theorem  \ref{thm:subseq} follows if we assume the lemmas above. In Sections \ref{subsec:pf1} and \ref{subsec:pf2} we prove Lemmas \ref{lem:separation} and \ref{lem:cover} (assuming Lemma \ref{lem:5}). As mentioned before, the proof of  Lemma \ref{lem:5} will be given in the following section.

In those proofs we shall need notation related to the above which we now introduce. Lemma \ref{lem:5} is a statement on the event $\mathcal{E}_{N,L}^{1}$ defined there. Similarly,  define the events $\mathcal{E}_{N,L}^{2}$ and $\mathcal{E}_{N,L}^{3}$ by
\begin{align*}
\mathcal{E}_{N,L}^{2}&=\left\{ \forall\boldsymbol{\sigma}_{1}\neq\pm\boldsymbol{\sigma}_{2}\in\mathscr{C}_{N}\left(L\right):\,\left|R\left(\boldsymbol{\sigma}_{1},\boldsymbol{\sigma}_{2}\right)\right|\leq r_{N}\right\},\\ 
\mathcal{E}_{N,L}^{3}&=\left\{ \mathscr{C}_{N}^{+}\left(\kappa\right)\subset\mathscr{G}_{N,L}\left(\mathscr{C}_{N}\left(L\right)\right)\right\} ,
\end{align*}
so that Lemmas \ref{lem:separation} and \ref{lem:cover} are statements on those events.
Also set 
\begin{align*}
	\mathcal{E}_{N,L}^\prime&=\mathcal{E}_{N,L}^{1}\cap\mathcal{E}_{N,L}^{2},\\
	\mathcal{E}_{N,L}&=\mathcal{E}_{N,L}^\prime\cap\mathcal{E}_{N,L}^{3}.
\end{align*}
Define
\begin{equation}
Q_{N,L}\triangleq\xi_{N}\left(\left[-L,\,L\right)\right)=\left(1+\iota_{p}\right)^{-1}\left|\mathscr{C}_{N}\left(L\right)\right|.
\end{equation}
For odd $p$, let $\boldsymbol{\sigma}_{i}$, $i\leq Q_{N,L}$,
be an enumeration of $\mathscr{C}_{N}\left(L\right)$, and for even
$p$ let it be an enumeration of $\bar{\mathscr{C}}_{N}\left(L\right)=\left\{ \boldsymbol{\sigma}\in\mathscr{C}_{N}\left(L\right):\,\left\langle \boldsymbol{\sigma},\mathbf{n}\right\rangle >0\right\} $,
with $\mathbf{n}=\left(0,...,0,1\right)$.\footnote{This set almost surely contains exactly half of the points in $\mathscr{C}_{N}\left(L\right)$,
and by definition does not contain antipodal points.} For any $i\leq Q_{N,L}$ we define the following. First, we let $T_{N,i}\left(L\right)$ denote the error term  of point (1) of Lemma \ref{lem:5} that corresponds to $\boldsymbol{\sigma}_{i}$,
\[
T_{N,i}\left(L\right)  =H_{N}^{+}\left(\mathscr{G}_{N,L}\left(\boldsymbol{\sigma}_{i}\right)\right)-\left(H_{N}\left(\boldsymbol{\sigma}_{i}\right)+\frac{1}{\sqrt{N}}H_{N}^{\prime}\left(\boldsymbol{\sigma}_{i}\right)-C_{0}\right).
\]
Second, $\tilde{X}_{N,i}\left(L\right)$ denotes the expected shift of the critical value resulted from the perturbation of the Hamiltonian,
\[
\tilde{X}_{N,i}\left(L\right)  =\frac{1}{\sqrt{N}}H_{N}^{\prime}\left(\boldsymbol{\sigma}_{i}\right)-C_{0}.
\]
And third, $\bar{X}_{N,i}\left(L\right)$ is the corresponding actual shift,
\[
\bar{X}_{N,i}\left(L\right)  =\tilde{X}_{N,i}\left(L\right)+T_{N,i}\left(L\right)=H_{N}^{+}\left(\mathscr{G}_{N,L}\left(\boldsymbol{\sigma}_{i}\right)\right)-H_{N}\left(\boldsymbol{\sigma}_{i}\right).
\]
For any $i>Q_{N,L}$ set $\bar{X}_{N,i}\left(L\right)=W_{i}-C_{0}$
and $T_{N,i}\left(L\right)=0$, where $W_{i}\sim N\left(0,1\right)$
are independent of each other and any other variables.

\subsection{\label{sec:pfThem subseq}Proof of Theorem \ref{thm:subseq}, assuming
	Lemmas \ref{lem:apx.Ligg}, \ref{lem:5}, \ref{lem:separation} and \ref{lem:cover}}

By Lemmas \ref{lem:5}, \ref{lem:separation} and \ref{lem:cover},
\[
\lim_{L\to\infty}\liminf_{N\to\infty}\mathbb{P}\left\{ \mathcal{E}_{N,L}\right\} =1.
\]
On the event $\mathcal{E}_{N,L}$,
from point (2) of Lemma \ref{lem:5} and the definition of $\mathcal{E}_{N,L}^{2}$,
for large enough $N$, $\mathscr{G}_{N,L}$ is one-to-one. Combined
with the definition of $\mathcal{E}_{N,L}^{3}$ this yields, on $\mathcal{E}_{N,L}$,\footnote{Up to the negligible event that there exists $\boldsymbol{\sigma}\in\mathscr{C}_{N}\left(L\right)$
	with $\left\langle \boldsymbol{\sigma},\mathbf{n}\right\rangle =0$.}
\begin{equation}
\left.\xi_{N}^{+}\right|_{\left[-\kappa,\kappa\right]}=\left.\left(\sum_{i=1}^{Q_{N,L}}\delta_{H_{N}\left(\boldsymbol{\sigma}_{i}\right)-m_{N}+\bar{X}_{N,i}\left(L\right)}\right)\right|_{\left[-\kappa,\kappa\right]}.\label{eq:30}
\end{equation}

Now, let $\xi_{N_{k}}$ be an arbitrary convergent subsequence of
$\xi_{N}$ with limit in distribution $\bar{\xi}_{\infty}=\sum\delta_{\bar{\xi}_{\infty,i}}$.
We wish to show that the corresponding subsequence $\xi_{N_{k}}^{+}$ of
$\xi_{N}^{+}$ converges to $\sum\delta_{\bar{\xi}_{\infty,i}+W_{i}-C_{0}}$
in distribution. For odd $p$, since $H_{N}\left(-\boldsymbol{\sigma}\right)=-H_{N}\left(\boldsymbol{\sigma}\right)$,
if $\boldsymbol{\sigma}\in\mathscr{C}_{N}\left(L\right)$, then $-\boldsymbol{\sigma}\notin\mathscr{C}_{N}\left(L\right)$,
for large enough $N$. For even $p$, the same follows for $\bar{\mathscr{C}}_{N}\left(L\right)$,
by its definition. Note that $H_{N}^{\prime}$ is independent of $\boldsymbol{\sigma}_{i}$,
$i\leq Q_{N,L}$, and recall the bound we have on $T_{N,i}\left(L\right)$
from point (1) of Lemma \ref{lem:5}. Combined with Lemma \ref{lem:separation}
those imply that for any finite $m$, $\left(\bar{X}_{N,i}\left(L\right)\right)_{i=1}^{m}$
converges in probability to $\left(W_{i}-C_{0}\right)_{i=1}^{m}$
as $N\to\infty$, where $W_{i}\sim N\left(0,1\right)$ are i.i.d and
independent of $\bar{\xi}_{\infty}$. From Proposition \ref{prop:intensity} the density function of $\bar \xi_\infty$ is bounded by $e^{c_px}$. We showed that the conditions
of Lemma \ref{lem:apx.Ligg} are met with $\eta$, $\eta_N$, $\eta_N^+$ and $X_i$ in the statement of the lemma corresponding to $\bar \xi_\infty$, $\xi_{N_k}$, $\xi_{N_k}^+$ and $W_i-C_0$ in the current setting.
It follows that $\xi_{N_{k}}^{+}$
converges in distribution to $\sum\delta_{\bar{\xi}_{\infty,i}+W_{i}-C_{0}}$.
This completes the proof. \qed

\subsection{\label{subsec:pf1}Proof of Lemma \ref{lem:separation}}

Let $\delta$, $\epsilon>0$. With the notation we used in the proof of Proposition \ref{prop:2nd moment}, for large enough $N$,
\begin{equation}
\mathbb{E}\left\{ \forall\boldsymbol{\sigma}_{1}\neq\pm\boldsymbol{\sigma}_{2}\in\mathscr{C}_{N}\left(L\right):\,\left|R\left(\boldsymbol{\sigma}_{1},\boldsymbol{\sigma}_{2}\right)\right|\geq \epsilon \right\} \leq
\mathbb{E}\left\{ \left[\mbox{Crt}_{N}\left(B_{\delta},I_{\epsilon}\right)\right]_{2}\right\},\label{eq:l4}
\end{equation}
where  $B_{\delta}=\left(-E_{0}-\delta,-E_{0}+\delta\right)$
and $I_{\epsilon}=\left(-1,1\right)\setminus\left(-\epsilon,\epsilon\right)$.
In the proof of Proposition \ref{prop:2nd moment} we showed that, with $\epsilon$ fixed and small enough $\delta$,
\begin{equation*}
\lim_{N\to\infty}\frac{1}{N}\log\left(\mathbb{E}\left\{ \left[\mbox{Crt}_{N}\left(B_{\delta},I_{\epsilon}\right)\right]_{2}\right\} \right) < 0.
\end{equation*}
In particular, as $N\to\infty$, the left-hand side of \eqref{eq:l4} goes to $0$. The lemma follows from this and Markov's inequality (where we note that the statement about $\mathscr{C}_{N}^+\left(L\right)$ from the fact that $H_N^+(\boldsymbol{\sigma})$ has the same law as $\sqrt{(N+1)/N}H_N(\boldsymbol{\sigma})$).
 \qed

\subsection{\label{subsec:pf2}Proof of Lemma \ref{lem:cover} (assuming Lemma \ref{lem:5})}

By the argument that led to (\ref{eq:29-1}) and by Proposition \ref{prop:intensity},
as $N\to\infty$, 
\begin{align}
\lim_{N\to\infty}\mathbb{E}\left\{ \xi_{N}^{+}\left(\left[-\kappa,\,\kappa\right]\right)\right\} \nonumber 
 & =\lim_{N\to\infty}\mathbb{E}\left\{ \xi_{N}\left(\left[\frac{1}{2}E_{0}-\kappa,\,\frac{1}{2}E_{0}+\kappa\right]\right)\right\} \nonumber \\
 & =\int_{\frac{1}{2}E_{0}-\kappa}^{\frac{1}{2}E_{0}+\kappa}e^{c_{p}x}dx\nonumber \\
 & =\frac{1}{c_{p}}e^{\frac{1}{2}c_{p}E_{0}}\left(e^{c_{p}\kappa}-e^{-c_{p}\kappa}\right).\label{eq:43}
\end{align}

On $\mathcal{E}_{N,L}^{\prime}$ for large enough $N$, $\mathscr{G}_{N,L}$
is one-to-one and its image is contained in $\mathscr{C}_{N}^{+}$.
Thus,
\begin{equation}
\left(\mathbf{1}_{\mathcal{E}_{N,L}^{\prime}}\sum_{i=1}^{Q_{N,L}}\delta_{H_{N}^+\left(\mathscr{G}_{N,L}\left(\boldsymbol{\sigma}_{i}\right)\right)-m_{N}}\right)\left(\left[-\kappa,\kappa\right]\right)\leq\xi_{N}^{+}\left(\left[-\kappa,\kappa\right]\right).\label{eq:a1}
\end{equation}
Hence, by applying Markov's inequality to the difference of the two
sides of (\ref{eq:a1}), in order to prove the lemma, it will be sufficient
to show that
\begin{equation}
\liminf_{L\to\infty}\liminf_{N\to\infty}\mathbb{E}\left\{ \left(\mathbf{1}_{\mathcal{E}_{N,L}^{\prime}}\sum_{i=1}^{Q_{N,L}}\delta_{H_{N}^+\left(\mathscr{G}_{N,L}\left(\boldsymbol{\sigma}_{i}\right)\right)-m_{N}}\right)\left(\left[-\kappa,\kappa\right]\right)\right\} \label{eq:31}
\end{equation}
is greater than or equal to (\ref{eq:43}) (and therefore equal). 

From the bound we have on $T_{N,i}\left(L\right)$ in point (1) of
Lemma \ref{lem:5}, for large
enough $N$,
\begin{align}
 & \!\!\!\! \mathbb{E}\left\{ \left(\mathbf{1}_{\mathcal{E}_{N,L}^{\prime}}\sum_{i=1}^{Q_{N,L}}\delta_{H_{N}^+\left(\mathscr{G}_{N,L}\left(\boldsymbol{\sigma}_{i}\right)\right)-m_{N}}\right)\left(\left[-\kappa,\kappa\right]\right)\right\}\nonumber \\ 
 &  =\mathbb{E}\left\{ \left(\mathbf{1}_{\mathcal{E}_{N,L}^{\prime}}\sum_{i=1}^{Q_{N,L}}\delta_{H_{N}\left(\boldsymbol{\sigma}_{i}\right)+\frac{1}{\sqrt{N}}H_{N}^\prime\left(\boldsymbol{\sigma}_{i}\right)-C_0-m_{N}+T_{N,i}(L)}\right)\left(\left[-\kappa,\kappa\right]\right)\right\}\nonumber \\ 
 &  \geq \mathbb{E}\left\{ \left(\mathbf{1}_{\mathcal{E}_{N,L}^{\prime}}\sum_{i=1}^{Q_{N,L}}\delta_{H_{N}\left(\boldsymbol{\sigma}_{i}\right)+\frac{1}{\sqrt{N}}H_{N}^\prime\left(\boldsymbol{\sigma}_{i}\right)-C_0-m_{N}}\right)\left(\left[-\kappa+\rho_N,\kappa-\rho_N\right]\right)\right\}\label{eq:l6} \\ 
 &  \geq \mathbb{E}\left\{ \left(\sum_{i=1}^{Q_{N,L}}\delta_{H_{N}\left(\boldsymbol{\sigma}_{i}\right)+\frac{1}{\sqrt{N}}H_{N}^\prime\left(\boldsymbol{\sigma}_{i}\right)-C_0-m_{N}}\right)\left(\left[-\kappa+\rho_N,\kappa-\rho_N\right]\right)\right\}\nonumber \\ 
 &-\mathbb{E}\left\{ \mathbf{1}_{\left(\mathcal{E}_{N,L}^{\prime}\right)^c} Q_{N,L}\right\}, \nonumber
\end{align}
where $\rho_{N}$ is defined in Lemma \ref{lem:5}.

From the Cauchy-Schwarz inequality, Propositions \ref{prop:intensity}
and \ref{prop:2nd moment} and Lemmas \ref{lem:5} and \ref{lem:separation},
\[
\lim_{L\to\infty}\limsup_{N\to\infty}\mathbb{E}\left\{ \mathbf{1}_{\left(\mathcal{E}_{N,L}^{\prime}\right)^c} Q_{N,L}\right\}\leq
\lim_{L\to\infty}\limsup_{N\to\infty}\left(\mathbb{E}\left\{ \left(Q_{N,L}\right)^{2}\right\} \mathbb{P}\left\{ \left(\mathcal{E}_{N,L}^{\prime}\right)^{c}\right\} \right)^{1/2}=0,
\]
since the limsup in $N$ of the second moment above is finite and the limit in $N$ of the probability goes to $0$.
Thus, when taking limits in \eqref{eq:l6} as in \eqref{eq:31}, the term corresponding to $\mathbb{E}\left\{ \mathbf{1}_{\left(\mathcal{E}_{N,L}^{\prime}\right)^c} Q_{N,L}\right\}$ vanishes.

Since $H_{N}^{\prime}\left(\boldsymbol{\sigma}_{i}\right)/\sqrt{N}\sim N\left(0,1\right)$
and the field $\left\{ H_{N}^{\prime}\left(\boldsymbol{\sigma}_{i}\right) \right\}$ is independent of the field $\left\{ H_{N}\left(\boldsymbol{\sigma}_{i}\right) \right\}$,
\begin{align*}
&\mathbb{E}\left\{ \left(\sum_{i=1}^{Q_{N,L}}\delta_{H_{N}\left(\boldsymbol{\sigma}_{i}\right)+\frac{1}{\sqrt{N}}H_{N}^\prime\left(\boldsymbol{\sigma}_{i}\right)-C_0-m_{N}}\right)\left(\left[-\kappa+\rho_N,\kappa-\rho_N\right]\right)\right\}\\
&\quad =\mu_{N,L} \star\mathbb{P}\left\{ W-C_{0}\in\,\cdot\,\right\} \left(\left[-\kappa+\rho_{N},\kappa-\rho_{N}\right]\right),
\end{align*}
where  $W\sim N\left(0,1\right)$ and $\mu_{N,L}$ is the intensity measure of $\sum_{i=1}^{Q_{N,L}}\delta_{H_{N}\left(\boldsymbol{\sigma}_{i}\right)-m_{N}}=\left.\xi_{N}\right|_{\left[-L,L\right]}$.

Therefore, by Proposition \ref{prop:intensity}, the limit in (\ref{eq:31})
is greater than or equal to 
\begin{align}
 & \lim_{L\to\infty}\lim_{N\to\infty}\mu_{N,L}\star\mathbb{P}\left\{ W-C_{0}\in\,\cdot\,\right\} \left(\left[-\kappa+\rho_{N},\kappa-\rho_{N}\right]\right)\nonumber \\
 & =\mu_{c_{p}}\star\mathbb{P}\left\{ W-C_{0}\in\,\cdot\,\right\} \left(\left[-\kappa,\kappa\right]\right)\nonumber \\
 & =\frac{1}{c_{p}}\exp\left\{ C_{0}c_{p}+\frac{1}{2}c_{p}^{2}\right\} \left(e^{c_{p}\kappa}-e^{-c_{p}\kappa}\right),\label{eq:32}
\end{align}
where $\mu_{c_{p}}\left(A\right)=\int_{A}e^{c_{p}x}dx$ for measurable
$A$. From (\ref{eq:C0E0rel}) it follows that (\ref{eq:43}) and
(\ref{eq:32}) are equal, which completes the proof.\qed

\section{\label{sec:pfProp5}Proof of Lemma \ref{lem:5}}

We fix throughout the section $\alpha\in\left(1/3,1/2\right)$, $\epsilon\in\left(0,1/2-\alpha\right)$,
and $\delta\in\left(0,p\left(E_{0}-E_{\infty}\right)\right)$ and
note that Lemma \ref{lem:5}  follows from this with general $\alpha<1/2$. Below we define a notion of $\left(\alpha,\epsilon,\delta\right)$-good
critical points. The lemma follows from the following two.
\begin{lem}
\label{lem:good_suff}A function $\mathscr{G}_{N,L}:\,\mathscr{C}_{N}\left(L\right)\to\mathscr{C}_{N}^{+}$
can be defined on the event that all $\boldsymbol{\sigma}\in\mathscr{C}_{N}\left(L\right)$
are $\left(\alpha,\epsilon,\delta\right)$-good such that, denoting
$\boldsymbol{\sigma}'=\mathscr{G}_{N,L}\left(\boldsymbol{\sigma}\right)$,
(\ref{eq:28}) holds.
\end{lem}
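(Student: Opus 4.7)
The plan is to construct $\mathscr{G}_{N,L}(\boldsymbol{\sigma})$ for each good critical point $\boldsymbol{\sigma}\in\mathscr{C}_N(L)$ by solving the critical-point equation $\nabla H_N^+=0$ in a microscopic neighborhood of $\boldsymbol{\sigma}$ on the sphere, via a quantitative implicit function / contraction argument. Concretely, I would work in a local chart $\Phi_{\boldsymbol{\sigma}}$ on $\mathbb{S}^{N-1}(\sqrt{N})$ at $\boldsymbol{\sigma}$ (for instance, the exponential map or orthogonal projection onto the tangent space), restricted to a tangent ball $B_{r_N}$ of a suitable radius $r_N\to 0$ in which the goodness conditions control all needed Taylor remainders. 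The $(\alpha,\epsilon,\delta)$-goodness should supply, in this chart: (i) a uniform lower bound of order $\delta$ on the smallest eigenvalue (in absolute value) of the Riemannian Hessian $\mathbf{H}\triangleq\nabla^2 H_N(\boldsymbol{\sigma})$, (ii) operator-norm bounds on the third covariant derivatives of $H_N$ on $B_{r_N}$ yielding a quadratic Taylor remainder for $\nabla H_N$, and (iii) bounds on $\mathbf{g}\triangleq\nabla H_N'(\boldsymbol{\sigma})$, on the operator norm of $\nabla^2 H_N'$, and on the linearization error of $H_N'$ on $B_{r_N}$.

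Given these, the gradient of $H_N^+$ in the chart takes the form
\[
\nabla H_N^+(u) = \mathbf{H} u + \tfrac{1}{\sqrt N}\mathbf{g} + \mathcal{R}(u),
\]
with $\mathcal{R}$ a remainder that is $o(1)$ relative to the leading terms on $B_{r_N}$. Since $\mathbf{H}$ is invertible with $\|\mathbf{H}^{-1}\|=O(1)$, a Banach fixed-point argument for the map $u\mapsto -\mathbf{H}^{-1}\bigl(\tfrac{1}{\sqrt N}\mathbf{g}+\mathcal{R}(u)\bigr)$ produces a unique zero $u^\star\in B_{r_N}$, and $\boldsymbol{\sigma}'\triangleq\Phi_{\boldsymbol{\sigma}}(u^\star)$ is the desired critical point of $H_N^+$. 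Since on the sphere of radius $\sqrt N$ one has $1-R(\boldsymbol{\sigma},\boldsymbol{\sigma}')=O(\|u^\star\|^2/N)$ and the goodness provides a bound $\|u^\star\|^2/N\leq N^{-2\alpha}$ (arising from $\epsilon<1/2-\alpha$), part (2) of (\ref{eq:28}) follows. Substituting $u^\star$ back into the Taylor expansion of $H_N^+$ itself yields
\[
H_N^+(\boldsymbol{\sigma}') = H_N(\boldsymbol{\sigma}) + \tfrac{1}{\sqrt N}H_N'(\boldsymbol{\sigma}) - \tfrac{1}{2N}\bigl\langle\mathbf{g},\mathbf{H}^{-1}\mathbf{g}\bigr\rangle + o(1),
\]
where the neglected contributions come from the cubic Taylor term of $H_N$ (controlled by the operator-norm bound on $D^3 H_N$) and from cross terms involving $H_N'$, all of which are $o(1)$ on the good event.

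The main remaining obstacle is to show that the quadratic form $\tfrac{1}{2N}\langle\mathbf{g},\mathbf{H}^{-1}\mathbf{g}\rangle$ converges to the deterministic limit $C_0$ of (\ref{eq:C0}), uniformly over all $\boldsymbol{\sigma}\in\mathscr{C}_N(L)$. Conditionally on $\boldsymbol{\sigma}$ being a critical point of $H_N$ with $H_N(\boldsymbol{\sigma})$ within $L$ of $-E_0 N$, the spectrum of $\mathbf{H}$ (in the normalization underlying (\ref{eq:49})) is asymptotically a semicircular distribution rigidly shifted so that its bulk sits above $\gamma_p E_0-2>0$, while $\mathbf{g}$ is an isotropic centered Gaussian vector on the tangent space, independent of $H_N$ since $H_N'$ is an independent copy. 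A Gaussian concentration bound for $\tfrac{1}{N}\mathbf{g}^{T}\mathbf{H}^{-1}\mathbf{g}$ around a constant multiple of $\tfrac{1}{N}\mathrm{tr}(\mathbf{H}^{-1})$, combined with convergence of the latter to the semicircle integral $\gamma_p\int(\gamma_p E_0-\lambda)^{-1}d\mu^*(\lambda)$ appearing in (\ref{eq:C0}), identifies the limit as $2C_0$. The delicate aspect is that all these estimates must hold simultaneously for the possibly exponentially many critical points in $\mathscr{C}_N(L)$; this uniformity is precisely what the notion of $(\alpha,\epsilon,\delta)$-good critical point is designed to bundle into a single deterministic condition, under which the above implicit-function construction proceeds with no further probabilistic input.
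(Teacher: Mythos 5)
Your overall strategy differs from the paper's in one essential respect, and the difference is where the gap lies. You propose to locate the critical point of $H_N^+$ by solving $\nabla H_N^+=0$ via a contraction/implicit-function argument, which requires a Lipschitz (i.e.\ gradient) bound on the Taylor remainder $\mathcal R(u)$ throughout the ball --- equivalently, control of the third derivatives of $H_N$ and the second derivatives of $H_N'$ near $\boldsymbol{\sigma}$. The paper's $(\alpha,\epsilon,\delta)$-goodness conditions do not supply this: the events $\mathcal A_5,\mathcal A_6$ only bound the \emph{sup-norm} of the error functions $\bar f_{\boldsymbol{\sigma}}^{(1)},\bar f_{\boldsymbol{\sigma}}^{(2)}$ on $\mathcal B_{N-1}(0,N^{-\alpha})$, not their gradients. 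The paper's proof is designed precisely to avoid needing gradient control of the remainders: it shows that the value of $\sqrt N\bar f_{\boldsymbol{\sigma}}^{+}$ at the interior point $Y_{\boldsymbol{\sigma}}$ (the minimizer of the convex quadratic-plus-linear approximation, which lies inside the ball by $\mathcal A_2$) is $O(N^{1-3\alpha})$ above $H_N+H_N'/\sqrt N-C_0$, while on the boundary $\Vert x\Vert=N^{-\alpha}$ the coercivity events $\mathcal A_7,\mathcal A_8$ force the value up by $\tfrac12 K_{p,\delta}N^{1-2\alpha}$; since $N^{1-2\alpha}\gg N^{1-3\alpha}$ the minimum over the closed ball is attained at an interior point, which is then the desired critical point. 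So if you want a contraction argument you must either add gradient bounds on the remainders to the definition of good (and re-prove that they hold with high probability), or derive them from the sup-norm bounds; as written, your conditions (ii)--(iii) are assumptions about goodness that the actual definition does not contain, so the proof does not establish the stated lemma.

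A second, smaller misallocation: the entire last paragraph of your proposal (concentration of $\tfrac1N\mathbf g^T\mathbf H^{-1}\mathbf g$ around $C_0$, convergence of $\tfrac1N\mathrm{tr}(\mathbf H^{-1})$ to the semicircle integral, and uniformity over exponentially many critical points) is probabilistic input that belongs to the companion Lemma \ref{lem:good_whp} (specifically the cases $i=3,4$ there), not to Lemma \ref{lem:good_suff}. The present lemma is purely deterministic on the good event: the identity $\Delta_{\boldsymbol{\sigma}}=g_3(\boldsymbol{\sigma})-g_4(\boldsymbol{\sigma})$ together with $\mathcal A_3\cap\mathcal A_4$ immediately gives $\Delta_{\boldsymbol{\sigma}}=-C_0+o(1)$, with no concentration argument needed here. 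You do correctly sense at the end that goodness is meant to bundle everything into a deterministic condition, but the proof you sketch does not then respect that division of labor.
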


\begin{lem}
\label{lem:good_whp}The event that all $\boldsymbol{\sigma}\in\mathscr{C}_{N}\left(L\right)$
are $\left(\alpha,\epsilon,\delta\right)$-good has probability approaching
$1$ as $N\to\infty$.
\end{lem}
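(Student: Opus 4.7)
The plan is to reduce Lemma \ref{lem:good_whp} to a first-moment estimate via a Kac--Rice argument. Let $\mathscr{B}_{N}(L)\subset\mathscr{C}_{N}(L)$ denote the set of critical points that fail to be $(\alpha,\epsilon,\delta)$-good; by Markov's inequality it suffices to show $\mathbb{E}\{|\mathscr{B}_{N}(L)|\}\to 0$. Writing this as an integral of the form $\int_{D_N}\rho_N(u)\,\mathbb{P}\{\boldsymbol{\sigma}\text{ is bad}\mid H_N(\boldsymbol{\sigma})=u,\,\nabla H_N(\boldsymbol{\sigma})=0\}\,du$, with $D_N=[m_N-L,m_N+L]$ and $\rho_N$ as in Section \ref{sec:prop-intensity}, and using that $\rho_N$ is of constant order on $D_N$ by Proposition \ref{prop:intensity}, the task reduces to showing that the conditional probability of failure of each goodness condition tends to $0$ uniformly in $u\in D_N$.

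The goodness conditions must be strong enough to drive the quadratic-plus-linear approximation scheme made explicit in Lemma \ref{lem:good_suff}. I therefore expect them to consist of three families of estimates: (i) a lower bound of order $\delta N$ on the smallest eigenvalue of the Riemannian Hessian of $H_N$ at $\boldsymbol{\sigma}$, ensuring non-degeneracy of the quadratic model; (ii) polynomial-in-$N^{\epsilon}$ bounds on the sup-norms of $H_N^{\prime}$, $\nabla H_N^{\prime}$, and $\nabla^{2}H_N^{\prime}$ over the geodesic ball of radius $N^{-\alpha}$ around $\boldsymbol{\sigma}$, so that $N^{-1/2}H_N^{\prime}$ contributes a controlled linear perturbation; (iii) analogous uniform bounds on the third derivatives of $H_N$ in the same ball, so that the Taylor remainder of $H_N$ is negligible compared to $\rho_N$.

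Each condition is verified with overwhelming conditional probability by a standard Gaussian tool. For (i), one uses the explicit conditional law of the Hessian given the value $u$ and the vanishing of the gradient, as computed in Lemma 3.2 of \cite{A-BA-C}: it is a GOE matrix shifted by a scalar multiple of the identity of magnitude $-u\gamma_p/\sqrt{N(N-1)}$. Since $u/N\approx-E_0$ and $E_0>E_\infty$, the shift pushes the spectrum strictly to the right of $\delta$ (the condition $\delta<p(E_0-E_\infty)$ is exactly what makes this possible), and the large deviation bound for the smallest GOE eigenvalue used in \cite{2nd} (e.g.\ Lemma \ref{lem:19}) yields exponential failure probability. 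For (ii), which depends only on the independent field $H_N^{\prime}$, and for (iii), one combines Borell--TIS with metric-entropy estimates for the suprema of the derivatives of $H_N,H_N^{\prime}$ on small spherical caps to obtain sub-Gaussian tails, then union-bounds over an $N^{-\alpha}$-net in the tangent space (whose cardinality is polynomial in $N$) to cover the entire ball.

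The main obstacle is calibrating the goodness conditions so that two opposing requirements are simultaneously met: (a) the failure probability is $o(1)$, or more precisely $o(1/\mathbb{E}\{|\mathscr{C}_N(L)|\})$ (recall that the latter is of constant order), and (b) the conditions are tight enough for Lemma \ref{lem:good_suff} to deliver the quantitative perturbation estimate \eqref{eq:28} with $\rho_N\to 0$. The ranges $\alpha\in(1/3,1/2)$ and $\epsilon\in(0,1/2-\alpha)$ reflect exactly this trade-off: the Taylor remainder contributes an error of order $N^{1/2-3\alpha+\epsilon}$, which must be $o(1)$, forcing $3\alpha>1/2+\epsilon$ (hence $\alpha>1/3$), while $\alpha<1/2$ keeps the ball small enough that on it the Hessian dominates the higher-order terms of $H_N$ and the linear term dominates $H_N^{\prime}$.
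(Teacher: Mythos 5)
Your overall reduction --- Markov's inequality applied to the expected number of bad critical points, a Kac--Rice formula converting this into $\mathbb{E}\{|\mathscr{C}_N(L)|\}$ (which is of constant order) times a conditional failure probability, and then per-condition Gaussian estimates --- is exactly the paper's strategy (Lemma \ref{lem:K-R} feeding into Lemmas \ref{lem:g1B1}, \ref{lem:g4B4} and \ref{lem:giBi}), and your treatment of the Hessian non-degeneracy condition via the shifted-GOE conditional law and the edge large-deviation bound of Lemma \ref{lem:19} is the right one. There are, however, two genuine gaps. First, your reconstruction of the goodness conditions omits the ones that identify the deterministic shift $C_0$: the paper requires (events $\mathcal{A}_3$, $\mathcal{A}_4$) that the second-order correction $\Delta_{\boldsymbol{\sigma}}$ to the critical value concentrate at $-\frac{p}{2\sqrt{N}}{\rm Tr}\bigl(\bigl(\nabla^{2}\bar{f}_{\boldsymbol{\sigma}}(0)\bigr)^{-1}\bigr)$ and that this trace concentrate at $C_0$. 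Verifying these is a substantive part of the proof, resting on Wigner's law and concentration of linear eigenvalue statistics of the GOE, and a variance computation for the quadratic form $(\nabla\bar f_{\boldsymbol{\sigma}}')^{T}(\nabla^{2}\bar f_{\boldsymbol{\sigma}})^{-1}\nabla\bar f_{\boldsymbol{\sigma}}'$; none of your three tools covers it. Moreover $g_4$ is a deterministic function of the Hessian, so ``the conditional probability given the value and gradient tends to $0$ uniformly'' is the wrong framework for it --- the paper instead proves a super-exponential unconditional bound that beats the $e^{KN}$ growth of the Kac--Rice prefactor.

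Second, for your family (iii) --- the Taylor remainder of $H_N$ on the ball of radius $N^{-\alpha}$ --- the field is correlated with the conditioning variables $H_N(\boldsymbol{\sigma})$, $\nabla H_N(\boldsymbol{\sigma})$, $\nabla^{2}H_N(\boldsymbol{\sigma})$ and with the determinant weight inside the Kac--Rice density, so Borell--TIS applied to the unconditional law does not directly bound the quantity appearing in the reduction. (For the $H_N'$-conditions this issue is absent by independence, which is why your family (ii) is fine.) The paper resolves this for the remainder term (case $i=6$) by proving an $e^{-KN}$ unconditional tail for arbitrary $K$ and then invoking Anderson's inequality to show that conditioning on $\nabla f(\mathbf n)=0$ can only decrease the failure probability. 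Some such step --- or an explicit computation of the conditional covariance of the remainder field --- is needed before your entropy/Borell--TIS estimate can be plugged into the Kac--Rice bound.
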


In Section \ref{sec:apx} we introduce the linear and quadratic approximations of the Hamiltonian $H_{N}\left(\boldsymbol{\sigma}\right)$ and the perturbed Hamiltonian $H_{N}^{+}\left(\boldsymbol{\sigma}\right)$ and related error functions. In Section \ref{sub:Approximate-critical-points} corresponding critical points and values will be discussed. Those will be used in Section \ref{sub:Good-critical-points} to define good critical points through bounds on various quantities. The proof of Lemma \ref{lem:good_suff} will then be given in Section \ref{sub:pf_good_suff}. 

In Sections \ref{sub:Auxiliary-results} and \ref{sub:pf_good_whp} we state several lemmas and explain 
how Lemma \ref{lem:good_whp} follows from them. 
Sections \ref{subsec:covs} and \ref{sub:Metric-entropies} are dedicated to results concerned with 
the covariance structure of the Hamiltonian, its gradient and its Hessian, and the metric entropy of the 
error functions related to the linear and quadratic approximations of Section \ref{sec:apx}.
The latter are used in Section \ref{sub:lemmas}, where the quoted
lemmas from Section \ref{sub:Auxiliary-results} are proved and the proof of Lemma \ref{lem:good_whp} is completed.

\subsection{\label{sec:apx}Linear and quadratic approximations}

It will be convenient to work on the Euclidean space instead of the
sphere directly, therefore we define the following. For any $x=\left(x_{1},...,x_{N-1}\right)\in\mathbb{R}^{N-1}$
with $\left\Vert x\right\Vert \leq1$ and any $y\in\mathbb{R}^{N}$
let 
\begin{align*}
	P\left(x\right) & =\left(x_{1},...,x_{N-1},\sqrt{1-\left\Vert x\right\Vert ^{2}}\right),\\
	S\left(y\right) & =\sqrt{N}y.
\end{align*}
For any $\boldsymbol{\sigma}\in\mathbb{S}^{N-1}=\left\{ \boldsymbol{\sigma}\in\mathbb{R}^{N}:\,\left\Vert \boldsymbol{\sigma}\right\Vert _{2}=1\right\} $
let $\theta_{\boldsymbol{\sigma}}$ be a rotation such that, with $\mathbf{n}\triangleq\left(0,...,0,1\right)$, 
\[
\theta_{\boldsymbol{\sigma}}\left(\mathbf{n}\right)=\boldsymbol{\sigma},
\]
and define 
\begin{equation}
	\bar{f}_{\boldsymbol{\sigma}}\left(x\right)\triangleq\frac{1}{\sqrt{N}}H_{N}\circ\theta_{\boldsymbol{\sigma}}\circ S\circ P\left(x\right),\label{eq:33}
\end{equation}
so that $\bar{f}_{\boldsymbol{\sigma}}\left(x\right)$ is a reparametrization
of the restriction of $H_{N}$ to the hemisphere around $\boldsymbol{\sigma}$,
normalized to have constant variance $1$. For $\boldsymbol{\sigma}=\mathbf{n}$, which due to stationarity we will be able to relate to the general case easily, we shall assume that the rotation $\theta_{\mathbf{n}}$ is the identity map. Define $\bar{f}_{\boldsymbol{\sigma}}^{\prime}\left(x\right)$
and $\bar{f}_{\boldsymbol{\sigma}}^{+}\left(x\right)$ similarly,
with $H_{N}$ replaced by $H_{N}^{\prime}$ and $H_{N}^{+}$, respectively.
The covariance function of $\bar{f}_{\boldsymbol{\sigma}}\left(x\right)$
is given by
\begin{align}
	\mathbb{E}\left\{ \bar{f}_{\boldsymbol{\sigma}}\left(x\right)\bar{f}_{\boldsymbol{\sigma}}\left(y\right)\right\}  & =\left(\left\langle x,y\right\rangle +\sqrt{1-\left\Vert x\right\Vert ^{2}}\sqrt{1-\left\Vert y\right\Vert ^{2}}\right)^{p}\triangleq\left(W\left(x,y\right)\right)^{p}.\label{eq:W}
\end{align}

Define the linear and quadratic approximations
\begin{align}
	\bar{f}_{\boldsymbol{\sigma},lin}^{\prime}\left(x\right) & =\bar{f}_{\boldsymbol{\sigma}}^{\prime}\left(0\right)+\left\langle \nabla\bar{f}_{\boldsymbol{\sigma}}^{\prime}\left(0\right),x\right\rangle ,\label{eq:flin}\\
	\bar{f}_{\boldsymbol{\sigma},quad}\left(x\right) & =\bar{f}_{\boldsymbol{\sigma}}\left(0\right)+\left\langle \nabla\bar{f}_{\boldsymbol{\sigma}}\left(0\right),x\right\rangle +\frac{1}{2}x^{T}\nabla^{2}\bar{f}_{\boldsymbol{\sigma}}\left(0\right)x,\label{eq:fquad}
\end{align}
where $\nabla$ and $\nabla^{2}$ are the usual Euclidean gradient
and Hessian. Define, for any $x\in\mathbb{R}^{N-1}$,
\begin{equation}
	\bar{f}_{\boldsymbol{\sigma}}^{\left(1\right)}\left(x\right)=\bar{f}_{\boldsymbol{\sigma}}^{\prime}\left(x\right)-\bar{f}_{\boldsymbol{\sigma},lin}^{\prime}\left(x\right)\,\,\mbox{and}\,\,\bar{f}_{\boldsymbol{\sigma}}^{\left(2\right)}\left(x\right)=\bar{f}_{\boldsymbol{\sigma}}\left(x\right)-\bar{f}_{\boldsymbol{\sigma},quad}\left(x\right).\label{eq:f(i)}
\end{equation}
From (\ref{eq:H+}),
\begin{equation}
	\sqrt{N}\bar{f}_{\boldsymbol{\sigma}}^{+}\left(x\right)=\sqrt{N}\bar{f}_{\boldsymbol{\sigma}}\left(x\right)+\bar{f}_{\boldsymbol{\sigma}}^{\prime}\left(x\right).\label{eq:3}
\end{equation}

In Section \ref{sub:Approximate-critical-points} we investigate
how the perturbation $\bar{f}_{\boldsymbol{\sigma}}^{\prime}\left(x\right)$
affect the critical value of $\sqrt{N}\bar{f}_{\boldsymbol{\sigma}}^{+}\left(x\right)$
when we assume that $x=0$ is a critical point of $\bar{f}_{\boldsymbol{\sigma}}\left(x\right)$
and approximate $\bar{f}_{\boldsymbol{\sigma}}^{\prime}\left(x\right)$
and $\bar{f}_{\boldsymbol{\sigma}}\left(x\right)$ by (\ref{eq:flin})
and (\ref{eq:fquad}), respectively. Section \ref{sub:Metric-entropies}
deals with metric entropy bounds related to the fields $\bar{f}_{\boldsymbol{\sigma}}^{\left(1\right)}\left(x\right)$
and $\bar{f}_{\boldsymbol{\sigma}}^{\left(2\right)}\left(x\right)$, which will be used to control the (maximum of the) error functions \eqref{eq:f(i)}.

\subsection{\label{sub:Approximate-critical-points}Approximate critical points
	and values of $\sqrt{N}\bar{f}_{\boldsymbol{\sigma}}^{+}\left(x\right)$}

Suppose $x=0$ is a critical point of $\sqrt{N}\bar{f}_{\boldsymbol{\sigma}}\left(x\right)$,
i.e. $\nabla\bar{f}_{\boldsymbol{\sigma}}\left(0\right)=0$. Define
the function 
\begin{align}
	\sqrt{N}\bar{f}_{\boldsymbol{\sigma},apx}^{+}\left(x\right) & \triangleq\sqrt{N}\bar{f}_{\boldsymbol{\sigma}}^{+}\left(x\right)-\sqrt{N}\bar{f}_{\boldsymbol{\sigma}}^{\left(2\right)}\left(x\right)-\bar{f}_{\boldsymbol{\sigma}}^{\left(1\right)}\left(x\right)\label{eq:5}\\
	& =\sqrt{N}\bar{f}_{\boldsymbol{\sigma},quad}\left(x\right)+\bar{f}_{\boldsymbol{\sigma},lin}^{\prime}\left(x\right).\nonumber 
\end{align}
Whenever the Hessian $\nabla^{2}\bar{f}_{\boldsymbol{\sigma}}\left(0\right)$
is invertible, and therefore $\sqrt{N}\bar{f}_{\boldsymbol{\sigma},apx}^{+}\left(x\right)$
is non-degenerate as a quadratic function, define 
\[
Y_{\boldsymbol{\sigma}}:=Y_{\boldsymbol{\sigma},N}\in\mathbb{R}^{N-1}\mbox{\,\,\ and\,\,}V_{\boldsymbol{\sigma}}:=V_{\boldsymbol{\sigma},N}\in\mathbb{R},
\]
to be the critical point and critical value of $\sqrt{N}\bar{f}_{\boldsymbol{\sigma},apx}^{+}\left(x\right)$,
respectively. If the Hessian is singular, set them arbitrarily to
be equal to $0$ (of course, this value will not affect our analysis
in the sequel). 

We now express $Y_{\boldsymbol{\sigma}}$ and $V_{\boldsymbol{\sigma}}$,
assuming $\nabla^{2}\bar{f}_{\boldsymbol{\sigma}}\left(0\right)$
is invertible, as functions of $\bar{f}_{\boldsymbol{\sigma}}^{\prime}\left(0\right)$,
$\nabla\bar{f}_{\boldsymbol{\sigma}}^{\prime}\left(0\right)$, and
$\nabla^{2}\bar{f}_{\boldsymbol{\sigma}}\left(0\right)$. By differentiation,
\[
\sqrt{N}\nabla^{2}\bar{f}_{\boldsymbol{\sigma}}\left(0\right)Y_{\boldsymbol{\sigma}}+\nabla\bar{f}_{\boldsymbol{\sigma}}^{\prime}\left(0\right)=0,
\]
and thus
\begin{equation}
	Y_{\boldsymbol{\sigma}}=-\frac{1}{\sqrt{N}}\left(\nabla^{2}\bar{f}_{\boldsymbol{\sigma}}\left(0\right)\right)^{-1}\nabla\bar{f}_{\boldsymbol{\sigma}}^{\prime}\left(0\right).\label{eq:4}
\end{equation}

By substitution in (\ref{eq:5}), $V_{\boldsymbol{\sigma}}=\sqrt{N}\bar{f}_{\boldsymbol{\sigma}}\left(0\right)+\bar{f}_{\boldsymbol{\sigma}}^{\prime}\left(0\right)+\Delta_{\boldsymbol{\sigma}}$
with 
\begin{align}
	\Delta_{\boldsymbol{\sigma}} & =\frac{\sqrt{N}}{2}Y_{\boldsymbol{\sigma}}^{T}\nabla^{2}\bar{f}_{\boldsymbol{\sigma}}\left(0\right)Y_{\boldsymbol{\sigma}}+\left\langle \nabla\bar{f}_{\boldsymbol{\sigma}}^{\prime}\left(0\right),Y_{\boldsymbol{\sigma}}\right\rangle \nonumber \\
	& =-\frac{1}{2\sqrt{N}}\left(\nabla\bar{f}_{\boldsymbol{\sigma}}^{\prime}\left(0\right)\right)^{T}\left(\nabla^{2}\bar{f}_{\boldsymbol{\sigma}}\left(0\right)\right)^{-1}\nabla\bar{f}_{\boldsymbol{\sigma}}^{\prime}\left(0\right)\label{eq:10}
\end{align}
(where here too $\Delta_{\boldsymbol{\sigma}}$ can be taken to be
$0$ on the event that $\nabla^{2}\bar{f}_{\boldsymbol{\sigma}}\left(0\right)$
is singular). In the sequel, we shall treat $Y_{\boldsymbol{\sigma}}$
and $\Delta_{\boldsymbol{\sigma}}$ as random fields on $\mathbb{S}^{N-1}$.

\subsection{\label{sub:Good-critical-points}Good critical points}

Below we define the functions $g_{i}=g_{i,N}$ on the sphere $\mathbb{S}^{N-1}$
and the corresponding sets $B_{i}=B_{i,N}$, $1\leq i\leq8$. Write 
\[
\mathcal{A}_{i}\left(\boldsymbol{\sigma}\right)=\mathcal{A}_{i,N}\left(\boldsymbol{\sigma}\right)\triangleq\left\{ g_{i,N}\left(\boldsymbol{\sigma}\right)\in B_{i,N}\right\} .
\]
With $\boldsymbol{\sigma}\in\mathbb{S}^{N-1}$, we say
that a critical point $\sqrt{N}\boldsymbol{\sigma}$ of the Hamiltonian
$H_{N}$\emph{ is $\left(\alpha,\delta,\epsilon\right)$-good }(or
simply \emph{good}) if $\mathcal{A}_{i}(\boldsymbol{\sigma})$ occurs for all $1\leq i\leq8$
(where we recall that in the beginning of Section \ref{sec:pfProp5} we assumed that $\alpha\in\left(1/3,1/2\right)$, $\epsilon\in\left(0,1/2-\alpha\right)$,
and $\delta\in\left(0,p\left(E_{0}-E_{\infty}\right)\right)$).

Recall that, with 
\[
\mathcal{B}_{N-1}\left(0,R\right)\triangleq\left\{ x\in\mathbb{R}^{N-1}:\,\left\Vert x\right\Vert \leq R\right\} ,
\]
denoting the Euclidean ball, for $\boldsymbol{\sigma}\in\mathbb{S}^{N-1}$, $\bar{f}_{\boldsymbol{\sigma}}:\mathcal{B}_{N-1}\left(0,1\right)\to\mathbb{R}$,
defined in (\ref{eq:33}), is the reparametrization of the restriction
of $\frac{1}{\sqrt{N}}H_{N}$ to the hemisphere around $\sqrt{N}\boldsymbol{\sigma}$
obtained by rotating, projecting and scaling it.  Let $\mathcal{V}\left(\delta\right)$ be the
set of real, symmetric matrices with eigenvalues in the interval
\[
\left(pE_{0}-\left(2\sqrt{p\left(p-1\right)}+\delta\right),\,pE_{0}+2\sqrt{p\left(p-1\right)}+\delta\right).
\]
The first function and corresponding set we define are
\[
g_{1}\left(\boldsymbol{\sigma}\right)=\nabla^{2}\bar{f}_{\boldsymbol{\sigma}}\left(0\right),\qquad B_{1}=\sqrt{N}\mathcal{V}\left(\delta\right).
\]

The random fields $\bar{f}_{\boldsymbol{\sigma}}^{+}$ and $\bar{f}_{\boldsymbol{\sigma}}^{\prime}$
were also defined in Section \ref{sec:apx}, similarly to $\bar{f}_{\boldsymbol{\sigma}}$
only with $H_{N}^{+}$ and $H_{N}^{\prime}$, respectively. In our
study of the perturbations of critical points and values in the same
section we defined $Y_{\boldsymbol{\sigma}}$ (see (\ref{eq:4}))
as the critical point of $\sqrt{N}\bar{f}_{\boldsymbol{\sigma},apx}^{+}\left(x\right)$
defined in (\ref{eq:5})\footnote{At least when $\nabla^{2}\bar{f}_{\boldsymbol{\sigma}}\left(0\right)$
is invertible, which is the case when $\mathcal{A}_{1}$ occurs, and
we shall indeed restrict to this event when discussing $Y_{\boldsymbol{\sigma}}$
below.}. The latter was defined as an approximation to $\sqrt{N}\bar{f}_{\boldsymbol{\sigma}}^{+}\left(x\right)$
obtained by replacing $\bar{f}_{\boldsymbol{\sigma}}$ and $\bar{f}_{\boldsymbol{\sigma}}^{\prime}$
by their quadratic and linear approximations, respectively. We define
\[
g_{2}\left(\boldsymbol{\sigma}\right)=Y_{\boldsymbol{\sigma}},\qquad B_{2}=\mathcal{B}_{N-1}\left(0,N^{-\alpha}\right),
\]
which is related to the proof of point (2) of (\ref{eq:28}).

Together with $Y_{\boldsymbol{\sigma}}$ we defined the corresponding
approximated critical value by $V_{\boldsymbol{\sigma}}=\sqrt{N}\bar{f}_{\boldsymbol{\sigma}}\left(0\right)+\bar{f}_{\boldsymbol{\sigma}}^{\prime}\left(0\right)+\Delta_{\boldsymbol{\sigma}}$
where $\Delta_{\boldsymbol{\sigma}}$ is given in (\ref{eq:10}).
We define 
\begin{align*}
g_{3}\left(\boldsymbol{\sigma}\right) & =\Delta_{\boldsymbol{\sigma}}+\frac{p}{2\sqrt{N}}{\rm Tr}\left(\left(\nabla^{2}\bar{f}_{\boldsymbol{\sigma}}\left(0\right)\right)^{-1}\right), & B_{3}=\left(-N^{-\frac{1}{2}+\epsilon},N^{-\frac{1}{2}+\epsilon}\right),\\
g_{4}\left(\boldsymbol{\sigma}\right) & =\frac{p}{2\sqrt{N}}{\rm Tr}\left(\left(\nabla^{2}\bar{f}_{\boldsymbol{\sigma}}\left(0\right)\right)^{-1}\right), & B_{4}=\left(C_{0}-\tau_{\epsilon,\delta}\left(N\right),C_{0}+\tau_{\epsilon,\delta}\left(N\right)\right),
\end{align*}
where $C_{0}$ is defined in (\ref{eq:C0}) and $\tau_{\epsilon,\delta}\left(N\right)$
is a sequence of numbers such that $\lim_{N\to\infty}\tau_{\epsilon,\delta}\left(N\right)=0$ which will be assumed to be large enough whenever needed.
Since $\Delta_{\boldsymbol{\sigma}}=g_{3}\left(\boldsymbol{\sigma}\right)-g_{4}\left(\boldsymbol{\sigma}\right)$,
knowing that $\mathcal{A}_{3}(\boldsymbol{\sigma})$ and $\mathcal{A}_{4}(\boldsymbol{\sigma})$
occur allows us to conclude that $\Delta_{\boldsymbol{\sigma}}$ is close to $-C_0$.

Lastly, we recall the linear and quadratic approximations $\bar{f}_{\boldsymbol{\sigma},lin}^{\prime}\left(x\right)$
and $\sqrt{N}\bar{f}_{\boldsymbol{\sigma},quad}\left(x\right)$ defined
in (\ref{eq:flin}) and (\ref{eq:fquad}), and the corresponding `error'
functions $\bar{f}_{\boldsymbol{\sigma}}^{\left(1\right)}\left(x\right)$
and $\bar{f}_{\boldsymbol{\sigma}}^{\left(2\right)}\left(x\right)$
defined in (\ref{eq:f(i)}). With 
\begin{equation}
K_{p,\delta}=p\left(E_{0}-E_{\infty}\right)-\delta>0,\label{eq:Kp}
\end{equation}
we set
\begin{align*}
g_{5}\left(\boldsymbol{\sigma}\right) & =\sup_{x\in\mathcal{B}_{N-1}\left(0,N^{-\alpha}\right)}\left|\bar{f}_{\boldsymbol{\sigma}}^{\left(1\right)}\left(x\right)\right|, & B_{5}=\left(-CN^{\frac{1}{2}-2\alpha},CN^{\frac{1}{2}-2\alpha}\right),\\
g_{6}\left(\boldsymbol{\sigma}\right) & =\sup_{x\in\mathcal{B}_{N-1}\left(0,N^{-\alpha}\right)}\left|\sqrt{N}\bar{f}_{\boldsymbol{\sigma}}^{\left(2\right)}\left(x\right)\right|, & B_{6}=\left(-CN^{1-3\alpha},CN^{1-3\alpha}\right),\\
g_{7}\left(\boldsymbol{\sigma}\right) & =\inf_{x:\,\left\Vert x\right\Vert =N^{-\alpha}}\sqrt{N}\bar{f}_{\boldsymbol{\sigma},quad}\left(x\right)-\sqrt{N}\bar{f}_{\boldsymbol{\sigma}}\left(0\right), & B_{7}=\left(\frac{1}{2}K_{p,\delta}N^{1-2\alpha},\,\infty\right),\\
g_{8}\left(\boldsymbol{\sigma}\right) & =\inf_{x:\,\left\Vert x\right\Vert =N^{-\alpha}}\bar{f}_{\boldsymbol{\sigma},lin}^{\prime}\left(x\right)-\bar{f}_{\boldsymbol{\sigma}}^{\prime}\left(0\right), & B_{8}=\left(-N^{-\frac{1}{2}-\alpha+\epsilon},\,\infty\right),
\end{align*}
where $C>0$ is a constant which will be assumed to be large enough
whenever needed. The events $\mathcal{A}_{i}$, $5\leq i\leq8$, will
be used when we will need to show that the minimum of $\sqrt{N}\bar{f}_{\boldsymbol{\sigma}}^{+}$
in $\mathcal{B}_{N-1}\left(0,N^{-\alpha}\right)$ is attained at an
interior point (by comparing the minimum on the boundary to a particular
value attained at a point in the interior).

\subsection{\label{sub:pf_good_suff}Proof of Lemma \ref{lem:good_suff}}

Let $\sqrt{N}\boldsymbol{\sigma}\in\mathscr{C}_{N}$, which is equivalent
to $x=0$ being a critical point of $\sqrt{N}\bar{f}_{\boldsymbol{\sigma}}\left(x\right)$,
and suppose $\sqrt{N}\boldsymbol{\sigma}$ is a good critical point.
Use $(\mathcal A_i)$ as a shorthand to ``$\mathcal A_i(\boldsymbol{\sigma})$ occurs''.
Since $\left(\mathcal{A}_{1}\right)$, $\nabla^{2}\bar{f}_{\boldsymbol{\sigma}}\left(0\right)$
is invertible and $Y_{\boldsymbol{\sigma}}$ is the global minimum
point of the convex function $\sqrt{N}\bar{f}_{\boldsymbol{\sigma},apx}^{+}\left(x\right)$
and it is defined by (\ref{eq:4}). The corresponding minimal value
is, as $N\to\infty$, 
\begin{align}
\sqrt{N}\bar{f}_{\boldsymbol{\sigma},apx}^{+}\left(Y_{\boldsymbol{\sigma}}\right) & \overset{\eqref{eq:10}}{=}\sqrt{N}\bar{f}_{\boldsymbol{\sigma}}\left(0\right)+\bar{f}_{\boldsymbol{\sigma}}^{\prime}\left(0\right)+g_{3}\left(\boldsymbol{\sigma}\right)-g_{4}\left(\boldsymbol{\sigma}\right)\label{eq:11}\\
 & \overset{\left(\mathcal{A}_{3}\right),\left(\mathcal{A}_{4}\right)}{=}H_{N}\left(\sqrt{N}\boldsymbol{\sigma}\right)+\frac{1}{\sqrt{N}}H_{N}^{\prime}\left(\sqrt{N}\boldsymbol{\sigma}\right)-C_{0}+o\left(1\right).\nonumber 
\end{align}

Now, as $N\to\infty$, 
\begin{align*}
\sqrt{N}\bar{f}_{\boldsymbol{\sigma}}^{+}\left(Y_{\boldsymbol{\sigma}}\right) & \overset{\eqref{eq:5}}{=}\sqrt{N}\bar{f}_{\boldsymbol{\sigma},apx}^{+}\left(Y_{\boldsymbol{\sigma}}\right)+\sqrt{N}\bar{f}_{\boldsymbol{\sigma}}^{\left(2\right)}\left(Y_{\boldsymbol{\sigma}}\right)+\bar{f}_{\boldsymbol{\sigma}}^{\left(1\right)}\left(Y_{\boldsymbol{\sigma}}\right)\\
 & \overset{\left(\mathcal{A}_{2}\right)}{\leq}\sqrt{N}\bar{f}_{\boldsymbol{\sigma},apx}^{+}\left(Y_{\boldsymbol{\sigma}}\right)+g_{5}\left(\boldsymbol{\sigma}\right)+g_{6}\left(\boldsymbol{\sigma}\right)\\
 & \overset{\left(\mathcal{A}_{5}\right),\left(\mathcal{A}_{6}\right)}{=}\sqrt{N}\bar{f}_{\boldsymbol{\sigma},apx}^{+}\left(Y_{\boldsymbol{\sigma}}\right)+O\left(N^{1-3\alpha}\right),
\end{align*}
and
\begin{align*}
\inf_{x:\,\left\Vert x\right\Vert =N^{-\alpha}}\sqrt{N}\bar{f}_{\boldsymbol{\sigma}}^{+}\left(x\right) & \overset{\eqref{eq:5}}{=}\inf_{x:\,\left\Vert x\right\Vert =N^{-\alpha}}\left\{ \sqrt{N}\bar{f}_{\boldsymbol{\sigma},quad}\left(x\right)+\bar{f}_{\boldsymbol{\sigma},lin}^{\prime}\left(x\right)+\sqrt{N}\bar{f}_{\boldsymbol{\sigma}}^{\left(2\right)}\left(x\right)+\bar{f}_{\boldsymbol{\sigma}}^{\left(1\right)}\left(x\right)\right\} \\
 & \geq\sqrt{N}\bar{f}_{\boldsymbol{\sigma}}\left(0\right)+\bar{f}_{\boldsymbol{\sigma}}^{\prime}\left(0\right)+g_{7}\left(\boldsymbol{\sigma}\right)+g_{8}\left(\boldsymbol{\sigma}\right)-g_{5}\left(\boldsymbol{\sigma}\right)-g_{6}\left(\boldsymbol{\sigma}\right)\\
 & \overset{\left(\mathcal{A}_{i}\right),i=5,...,8}{\geq}H_{N}\left(\sqrt{N}\boldsymbol{\sigma}\right)+\frac{1}{\sqrt{N}}H_{N}^{\prime}\left(\sqrt{N}\boldsymbol{\sigma}\right)+\frac{1}{2}K_{p,\delta}N^{1-2\alpha}\left(1+o\left(1\right)\right).
\end{align*}
Therefore, 
\[
\sqrt{N}\bar{f}_{\boldsymbol{\sigma}}^{+}\left(Y_{\boldsymbol{\sigma}}\right)\leq\inf_{x:\,\left\Vert x\right\Vert =N^{-\alpha}}\sqrt{N}\bar{f}_{\boldsymbol{\sigma}}^{+}\left(x\right),
\]
and the minimum of $\sqrt{N}\bar{f}_{\boldsymbol{\sigma}}^{+}$ in
$\mathcal{B}_{N-1}\left(0,N^{-\alpha}\right)$ is attained at an interior
point $x_{*}$. 

Define $\mathscr{G}_{N,L}\left(\sqrt{N}\boldsymbol{\sigma}\right)$
to be the point corresponding to $x_{*}$ on the sphere (i.e., $\theta_{\boldsymbol{\sigma}}\circ S\circ P_{E}\left(x_{*}\right)$,
see (\ref{eq:33})). For large enough $N$, the fact that $\left\Vert x_{*}\right\Vert <N^{-\alpha}$
implies that point $(2)$ of (\ref{eq:28}) is satisfied for the good
critical point $\sqrt{N}\boldsymbol{\sigma}$. We also have that 
\[
\sup_{x\in B_{E}^{N-1}\left(0,N^{-\alpha}\right)}\left|\sqrt{N}\bar{f}_{\boldsymbol{\sigma}}^{+}\left(x\right)-\sqrt{N}\bar{f}_{\boldsymbol{\sigma},apx}^{+}\left(x\right)\right|\overset{\eqref{eq:5}}{\leq}g_{5}\left(\boldsymbol{\sigma}\right)+g_{6}\left(\boldsymbol{\sigma}\right)\leq o\left(1\right),\,\,\,\mbox{as }N\to\infty.
\]
Combined with (\ref{eq:11}), this implies that point $(1)$ of (\ref{eq:28})
is satisfied. This
completes the proof of Lemma \ref{lem:good_suff}. \qed

\subsection{\label{sub:Auxiliary-results}Auxiliary results for Lemma \ref{lem:good_whp}}

In order to prove Lemma \ref{lem:good_whp}, we will show that the
expected number of critical points $\boldsymbol{\sigma}\in\mathscr{C}_{N}\left(L\right)$
which are not good goes to $0$ as $N\to\infty$. The following lemma,
in the basis of our proof, is obtained by an application of a variant
of the Kac-Rice formula \cite[Theorem 12.1.1]{RFG}. We define $f(\boldsymbol{\sigma})=f_{N}(\boldsymbol{\sigma})$
as the unit variance random field on $\mathbb{S}^{N-1}\triangleq\left\{ \boldsymbol{\sigma}\in\mathbb{R}^{N}:\,\left\Vert \boldsymbol{\sigma}\right\Vert _{2}=1\right\} $
given by $f\left(\boldsymbol{\sigma}\right)=\frac{1}{\sqrt{N}}H_{N}\left(\sqrt{N}\boldsymbol{\sigma}\right)$.
Recall that $\omega_{N}$, given in (\ref{eq:omega_vol}), is the
surface area of the $N-1$-dimensional unit sphere.
\begin{lem}
\label{lem:K-R}For $2\leq i\leq8$ and $L>0$, 
\begin{align}
 & \mathbb{E}\left\{ \#\left\{ \sqrt{N}\boldsymbol{\sigma}\in\mathscr{C}_{N}\left(L\right):\,g_{i}\left(\boldsymbol{\sigma}\right)\notin B_{i},\,g_{1}\left(\boldsymbol{\sigma}\right)\in B_{1}\right\} \right\} \leq\omega_{N}\left(\left(N-1\right)p\left(p-1\right)\right)^{\frac{N-1}{2}}\varphi_{\nabla f(\mathbf n)}\left(0\right)\nonumber \\
 & \quad\times\mathbb{E}\left\{ \left|\det\left(\frac{\nabla^{2}f(\mathbf n)}{\sqrt{\left(N-1\right)p\left(p-1\right)}}\right)\right|\mathbf{1}\Big\{\left|\sqrt{N}f(\mathbf n)-m_{N}\right|<L,\,g_{i}\left(\mathbf{n}\right)\notin B_{i},\,g_{1}\left(\mathbf{n}\right)\in B_{1}\Big\}\,\Bigg|\,\nabla f(\mathbf n)=0\right\} ,\label{eq:35}
\end{align}
where $\varphi_{\nabla f(\mathbf n)}\left(0\right)$ is
the Gaussian density of $\nabla f(\mathbf n)$ at $0$,
$\mathbf{n}=\left(0,...,0,1\right)$,  $(E_i(\boldsymbol{\sigma}))_{i=1}^{N-1}$ is an arbitrary piecewise smooth orthonormal frame field on the sphere (w.r.t the standard Riemannian metric) and
\begin{equation}
\label{eq:gradHess11}
\nabla f\left(\boldsymbol{\sigma}\right)=\left(E_{i}f\left(\boldsymbol{\sigma}\right)\right)_{i=1}^{N-1},\,\,\nabla^{2}f\left(\boldsymbol{\sigma}\right)=\left(E_{i}E_{j}f\left(\boldsymbol{\sigma}\right)\right)_{i,j=1}^{N-1}.
\end{equation}

 In
addition, for $i=1$  (\ref{eq:35}) still holds 
if we remove the condition $g_{1}\left(\boldsymbol{\sigma}\right)\in B_{1}$
from both sides of the equation. Lastly, (\ref{eq:35}) holds as an equality if we remove the
indicator from the right-hand side and the requirements on $g_{i}\left(\boldsymbol{\sigma}\right)$
and $g_{1}\left(\boldsymbol{\sigma}\right)$ from left-hand side (in which case it expresses the expectation of the number of points in $\mathscr{C}_{N}\left(L\right)$).
\end{lem}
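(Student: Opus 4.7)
The plan is to apply the Kac-Rice formula \cite[Theorem 12.1.1]{RFG} to the unit-variance field $f$ on $\mathbb{S}^{N-1}$ and then to collapse the resulting integral over the sphere to a single term by invoking isotropy. Under the rescaling $\boldsymbol{\sigma} \mapsto \sqrt{N}\boldsymbol{\sigma}$, critical points of $H_N$ on $\sqrt{N}\mathbb{S}^{N-1}$ correspond bijectively to those of $f$ on $\mathbb{S}^{N-1}$, with values rescaled by $\sqrt{N}$. For any measurable event $E(\boldsymbol{\sigma})$ depending on $f$ in a neighborhood of $\boldsymbol{\sigma}$, Kac-Rice gives
\begin{equation*}
\mathbb{E}\bigl\{\#\{\boldsymbol{\sigma} : \nabla f(\boldsymbol{\sigma}) = 0,\, E(\boldsymbol{\sigma})\}\bigr\} = \int_{\mathbb{S}^{N-1}} \mathbb{E}\bigl\{|\det \nabla^2 f(\boldsymbol{\sigma})|\, \mathbf{1}_{E(\boldsymbol{\sigma})} \,\big|\, \nabla f(\boldsymbol{\sigma}) = 0\bigr\}\, \varphi_{\nabla f(\boldsymbol{\sigma})}(0)\, dS(\boldsymbol{\sigma}).
\end{equation*}
Choosing $E(\boldsymbol{\sigma}) = \{g_1(\boldsymbol{\sigma}) \in B_1\} \cap \{g_i(\boldsymbol{\sigma}) \notin B_i\} \cap \{|\sqrt{N} f(\boldsymbol{\sigma}) - m_N| < L\}$ makes the left-hand side coincide with that of (\ref{eq:35}).

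Isotropy of the law of $f$ is immediate from (\ref{eq:overlap}). The rotation $\theta_{\boldsymbol{\sigma}}$ introduced in Section \ref{sec:apx} pushes the canonical frame $(e_i)$ to $E_i(\boldsymbol{\sigma}) = \theta_{\boldsymbol{\sigma}}(e_i)$ on the tangent space at $\boldsymbol{\sigma}$ and pulls the field back to $\bar f_{\boldsymbol{\sigma}} = f\circ\theta_{\boldsymbol{\sigma}}\circ S\circ P$, whose law coincides with that of $\bar f_{\mathbf{n}}$. Since each $g_i(\boldsymbol{\sigma})$ is, by construction, the same functional of $\bar f_{\boldsymbol{\sigma}}$ as $g_i(\mathbf{n})$ is of $\bar f_{\mathbf{n}}$, the conditional expectation in the above display is independent of $\boldsymbol{\sigma}$ and equals its value at $\mathbf{n}$. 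The integral thus collapses to $\omega_N$ times this common value.

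The final step is the Hessian normalization. Differentiating (\ref{eq:W}) twice in each argument at the origin shows that each entry $E_i E_j f(\mathbf{n})$ is a centered Gaussian whose variance is of order $p(p-1)$ (twice that on the diagonal), so $\nabla^2 f(\mathbf{n})/\sqrt{(N-1)p(p-1)}$ has a GOE-like scaling; factoring $\sqrt{(N-1)p(p-1)}$ out of each of the $N-1$ rows of the Hessian produces the prefactor $((N-1)p(p-1))^{(N-1)/2}$ appearing in (\ref{eq:35}). The indicator $\{g_1 \in B_1\}$ confines the eigenvalues of $\nabla^2 f(\mathbf{n})$ to an interval bounded strictly away from zero (the constraint $\delta < p(E_0 - E_\infty)$ forces $pE_0 - 2\sqrt{p(p-1)} - \delta > 0$), which secures the non-degeneracy hypothesis of Kac-Rice. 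The main obstacle, more technical than conceptual, is verifying the measurability of the selection $\boldsymbol{\sigma} \mapsto \theta_{\boldsymbol{\sigma}}$ that enters the definitions of $g_5, \ldots, g_8$ (which involve suprema and infima over Euclidean balls in the $\bar f_{\boldsymbol{\sigma}}$ parametrization); a measurable choice on $\mathbb{S}^{N-1}\setminus\{-\mathbf{n}\}$ is sufficient, and the removed singleton contributes nothing to the integral. Since \cite[Lemmas 3.1, 3.2]{A-BA-C} already establish the equality version of (\ref{eq:35}) without the $\mathbf{1}\{g_i \notin B_i\}$ indicator, the present lemma amounts to inserting this extra measurable factor into their computation; the inequality (rather than equality) form merely accommodates keeping the indicator $\mathbf{1}\{g_1(\mathbf{n}) \in B_1\}$ on the right-hand side even in the $i = 1$ version in which the LHS does not enforce it.
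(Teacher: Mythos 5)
Your overall strategy (Kac--Rice plus isotropy to collapse the spherical integral to $\omega_N$ times the value at $\mathbf n$, then factoring $\sqrt{(N-1)p(p-1)}$ out of the Hessian) matches the paper's, but there is a genuine gap at the central technical step. You apply \cite[Theorem 12.1.1]{RFG} \emph{directly} with the auxiliary field $h=(g_i,g_1,\sqrt N f)$ and the set $B_i^c\times B_1\times D_N$, and you write the resulting identity as an equality. This presupposes that the regularity and non-degeneracy hypotheses of that theorem hold for the joint field $(\nabla f, h)$, which is exactly the point that is problematic and which your proposal does not address. For instance, for $i=4$ the component $g_4(\boldsymbol{\sigma})=\frac{p}{2\sqrt N}{\rm Tr}\bigl((\nabla^2\bar f_{\boldsymbol{\sigma}}(0))^{-1}\bigr)$ is a deterministic function of $g_1(\boldsymbol{\sigma})=\nabla^2\bar f_{\boldsymbol{\sigma}}(0)$, so the pair $(g_4,g_1)$ has no joint density and the theorem's hypotheses fail as stated; for $i=5,\dots,8$ the $g_i$ are suprema/infima of dependent fields, and the continuity of the relevant conditional densities is far from automatic. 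Your remark that the indicator $\{g_1\in B_1\}$ ``secures the non-degeneracy hypothesis'' confuses invertibility of the Hessian on an event with the non-degeneracy of the Gaussian/joint laws required by the theorem, and the measurability of $\boldsymbol{\sigma}\mapsto\theta_{\boldsymbol{\sigma}}$ is not the main obstacle.

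The paper's proof circumvents precisely this issue: it replaces $g_i$ by the smoothed field $g_i+\epsilon_2 Z_i$ (with $Z_i$ an independent standard Gaussian vector, which restores joint non-degeneracy) and enlarges $B_i^c$ to its closed $\epsilon_1$-neighbourhood $(B_i^c)_{\epsilon_1}$, applies Kac--Rice to the modified field, and then lets $\epsilon_2\to0$ (using $g_i+\epsilon_2 Z_i\stackrel{d}{\to}g_i$ and upper semi-continuity of the indicator of the closed set) followed by $\epsilon_1\to0$ (monotone convergence, $B_i^c$ closed). This limiting procedure is the reason the lemma is stated as an \emph{inequality}; your explanation that the inequality ``merely accommodates keeping the indicator $\mathbf 1\{g_1(\mathbf n)\in B_1\}$ on the right-hand side in the $i=1$ version'' is incorrect (for $i=1$ the lemma removes that condition from both sides). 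Indeed the paper explicitly remarks that the equality probably holds but that verifying the required non-degeneracy conditions directly would be tedious, which is exactly the verification your proposal implicitly assumes without carrying out. To repair your argument you would either have to perform that verification for each $i$, or incorporate the smoothing-and-limit device.
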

We note that the last case with equality in Lemma \ref{lem:K-R} also follows from \cite[Eq. (3.21)]{A-BA-C} by summing over $k$. The proof of Lemma \ref{lem:K-R} is postponed to Appendix II. We remark
that, in fact, it seems that (\ref{eq:35}) also holds as an equality.
However, proving this requires a tedious inspection of certain non-degeneracy
conditions related to \cite[Theorem 12.1.1]{RFG} which we prefer
avoiding since the easier to prove inequality above suffices to us.
We will prove in Section \ref{sub:lemmas} the following three
lemmas, using Lemma \ref{lem:K-R} for the first two.
\begin{lem}
\label{lem:g1B1}For any $L>0$,
\begin{equation}
\lim_{N\to\infty}\mathbb{E}\left\{ \#\left\{ \sqrt{N}\boldsymbol{\sigma}\in\mathscr{C}_{N}\left(L\right):\,g_{1}\left(\boldsymbol{\sigma}\right)\notin B_{1}\right\} \right\} =0.\label{eq:45}
\end{equation}

\end{lem}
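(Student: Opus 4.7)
The proof plan is to apply Lemma \ref{lem:K-R} with $i=1$, invoking the remark that for this $i$ the auxiliary condition $g_{1}(\boldsymbol{\sigma})\in B_{1}$ may be dropped from both sides of (\ref{eq:35}). This bounds the left-hand side of (\ref{eq:45}) by
\begin{equation*}
C_{N}\cdot\mathbb{E}\bigl\{|\det\tilde{\mathbf{H}}|\mathbf{1}\{|\sqrt{N}f(\mathbf{n})-m_{N}|<L,\,g_{1}(\mathbf{n})\notin B_{1}\}\bigm|\nabla f(\mathbf{n})=0\bigr\},
\end{equation*}
with $C_{N}=\omega_{N}((N-1)p(p-1))^{(N-1)/2}\varphi_{\nabla f(\mathbf{n})}(0)$ and $\tilde{\mathbf{H}}=\nabla^{2}f(\mathbf{n})/\sqrt{(N-1)p(p-1)}$. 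The equality form of Lemma \ref{lem:K-R} (no indicator) gives $\mathbb{E}\{\#\mathscr{C}_{N}(L)\}=C_{N}\cdot\mathbb{E}\{|\det\tilde{\mathbf{H}}|\mathbf{1}\{|\sqrt{N}f(\mathbf{n})-m_{N}|<L\}\mid\nabla f(\mathbf{n})=0\}$, and by Proposition \ref{prop:intensity} this is $(1+\iota_{p})\int_{-L}^{L}e^{c_{p}x}dx+o(1)=O(1)$. It therefore suffices to show that the ratio of the two conditional expectations is $O(e^{-cN})$.

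Conditioning further on $u=\sqrt{N}f(\mathbf{n})\in[m_{N}-L,m_{N}+L]$, the standard Gaussian conditional-Hessian formula (as used in \cite{A-BA-C}) yields
\begin{equation*}
\nabla^{2}f(\mathbf{n})\bigm|\bigl\{\nabla f(\mathbf{n})=0,\,\sqrt{N}f(\mathbf{n})=u\bigr\}\stackrel{d}{=}\sqrt{(N-1)p(p-1)}(\mathbf{M}+v_{u}\mathbf{I}),
\end{equation*}
with $\mathbf{M}$ an $(N-1)$-dimensional GOE matrix and $v_{u}=-\gamma_{p}u/\sqrt{N(N-1)}=\gamma_{p}E_{0}+o(1)$ uniformly in $u$. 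Unwinding $B_{1}=\sqrt{N}\mathcal{V}(\delta)$, the event $g_{1}(\mathbf{n})\notin B_{1}$ becomes, for large $N$ and after absorbing the $o(1)$ error in $v_{u}$ into a negligible adjustment of the threshold, the event $\{\lambda_{\min}(\mathbf{M})<-2-\delta'\}\cup\{\lambda_{\max}(\mathbf{M})>2+\delta'\}$ with $\delta'=\delta/(2\sqrt{p(p-1)})$.

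The $\lambda_{\min}$ contribution is handled directly by Lemma \ref{lem:19}: choose $t_{1},t_{2}$ with $2<t_{1}<\gamma_{p}E_{0}<t_{2}$ so that $v_{u}\in(t_{1},t_{2})$ for all large $N$ (this is possible since $\gamma_{p}E_{0}>2$ because $E_{0}>E_{\infty}=2/\gamma_{p}$), and apply the lemma with a $\delta$-parameter slightly smaller than $\delta'$; the resulting threshold $t_{1}-2-\delta$ majorises the relevant one uniformly in $v_{u}$, yielding the required $e^{-cN}$ suppression. The $\lambda_{\max}$ contribution requires the analogous statement for the upper edge, which is not stated in the paper but follows by an essentially identical argument to Lemma 21 of \cite{2nd}: condition on the top eigenvalue, use the standard large-deviation bound $\mathbb{P}\{\lambda_{\max}(\mathbf{M})>2+\epsilon\}\leq e^{-cN\epsilon^{3/2}}$, and compare against the exponential asymptotic for $\mathbb{E}\{|\det(\mathbf{M}+v_{u}\mathbf{I})|\}$ extracted from (\ref{eq:51})-(\ref{eq:53}). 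Integrating the resulting uniform $O(e^{-cN})$ ratio against the Gaussian density of $\sqrt{N}f(\mathbf{n})$ given $\nabla f(\mathbf{n})=0$ over $[m_{N}-L,m_{N}+L]$ preserves the bound and completes the proof. The only real obstacle is the $\lambda_{\max}$ analogue of Lemma \ref{lem:19}; this is the sort of estimate that arises already in \cite{A-BA-C,2nd}, so while technical it is routine.
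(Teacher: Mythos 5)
Your proof follows the paper's argument essentially step for step: Lemma \ref{lem:K-R} with $i=1$ and the $g_{1}\in B_{1}$ condition dropped, Corollary \ref{cor:1} for the conditional law of the Hessian, Lemma \ref{lem:19} for the determinant-weighted edge-deviation bound, and Proposition \ref{prop:intensity} (via the equality case of Lemma \ref{lem:K-R}) to conclude from the finiteness of $\lim_N\mathbb{E}\{\#\mathscr{C}_{N}(L)\}$. The only place you depart from the paper is in making explicit that $\{g_{1}(\mathbf{n})\notin B_{1}\}$ also contains the upper-edge event $\{\lambda_{\max}(\mathbf{M})>2+\delta'\}$, which the statement of Lemma \ref{lem:19} does not literally cover; the paper silently absorbs this into its citation of that lemma, so your sketched $\lambda_{\max}$ analogue (top-eigenvalue large deviations against the exponential asymptotics of the expected absolute determinant) fills in a step the paper leaves implicit rather than signalling any real divergence or difficulty.
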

 
\begin{lem}
\label{lem:g4B4}For $i=4,6$ and any $L>0$,
\[
\lim_{N\to\infty}\mathbb{E}\left\{ \#\left\{ \sqrt{N}\boldsymbol{\sigma}\in\mathscr{C}_{N}\left(L\right):\,g_{i}\left(\boldsymbol{\sigma}\right)\notin B_{i},\,g_{1}\left(\boldsymbol{\sigma}\right)\in B_{1}\right\} \right\} =0.
\]

\end{lem}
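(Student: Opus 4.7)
The plan is to apply the Kac-Rice bound of Lemma \ref{lem:K-R}, which reduces each expectation to an integral of a conditional expectation at the point $\mathbf{n}$, given $\nabla f(\mathbf{n}) = 0$ and weighted by $|\det(\nabla^2 f(\mathbf{n})/\sqrt{(N-1)p(p-1)})|$. Since the unconstrained version of this Kac-Rice integral equals $\mathbb{E}\{\#\mathscr{C}_N(L)\}$, which is $O(1)$ by Proposition \ref{prop:intensity}, and since Lemma \ref{lem:19} lets us drop the absolute value on $\{g_1(\mathbf{n}) \in B_1\}$ at the cost of a factor $1+o(1)$, it suffices to show that, uniformly in $u \in [(m_N-L)/\sqrt{N}, (m_N+L)/\sqrt{N}]$, the conditional probability
\[
\mathbb{P}\{g_i(\mathbf{n}) \notin B_i,\; g_1(\mathbf{n}) \in B_1 \mid \nabla f(\mathbf{n}) = 0,\; f(\mathbf{n}) = u\}
\]
tends to $0$ as $N \to \infty$ at a rate dominating the normalized determinant factor, which is itself uniformly bounded on $\{g_1(\mathbf{n}) \in B_1\}$.

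For $i = 4$, I would invoke the conditional Hessian law from \cite[Lemma 3.2]{A-BA-C}: given $\nabla f(\mathbf{n}) = 0$ and $f(\mathbf{n}) = u$, the Hessian $\nabla^2 f(\mathbf{n})$ is distributed as $\sqrt{(N-1)p(p-1)}\,\mathbf{M} - pu\mathbf{I}$ for a GOE matrix $\mathbf{M}$. Since $u = -\sqrt{N}E_0(1+o(1))$ throughout the permitted range, the eigenvalues of $\nabla^2 \bar{f}_{\boldsymbol{\sigma}}(0)/\sqrt{N}$ are approximately $\sqrt{p(p-1)}\lambda_i + pE_0$ with $(\lambda_i)$ the eigenvalues of $\mathbf{M}$. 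On $\{g_1(\mathbf{n}) \in B_1\}$ these lie in a fixed compact interval bounded away from $0$ (thanks to $E_0 > E_\infty$), so
\[
g_4(\mathbf{n}) = \frac{p}{2N}\sum_{i=1}^{N-1} \frac{1}{\sqrt{p(p-1)}\lambda_i + pE_0} + o(1)
\]
is a bounded continuous functional of the empirical spectral measure of $\mathbf{M}$. By the exponentially fast concentration of that measure to the semicircle law $\mu^*$, combined with the symmetry of $\mu^*$ and the definition \eqref{eq:C0}, $g_4(\mathbf{n})$ converges in probability to $C_0$. Choosing the tolerance $\tau_{\epsilon,\delta}(N) \to 0$ slowly enough to be dominated by this concentration rate yields the desired bound on the conditional probability.

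For $i = 6$, I need a uniform control on $\sup_{\|x\| \leq N^{-\alpha}}|\bar{f}_{\boldsymbol{\sigma}}^{(2)}(x)|$. Expanding the covariance \eqref{eq:W} around $x=y=0$ and subtracting the first and second order Taylor terms, the remainder field has variance $O(\|x\|^6)$ and increment variance $O(\|x-y\|^2(\|x\|\vee\|y\|)^4)$. A Dudley/Borell--TIS chaining argument over the radius-$N^{-\alpha}$ ball in $\mathbb{R}^{N-1}$, of the kind to be set up in Section \ref{sub:Metric-entropies}, then yields $\sup_{\|x\| \leq N^{-\alpha}}|\bar{f}_{\boldsymbol{\sigma}}^{(2)}(x)| = O(N^{-3\alpha}\sqrt{N})$ with probability $1 - o(1)$. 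Multiplying by $\sqrt{N}$ places $g_6(\mathbf{n})$ in $B_6$ once $C$ is taken large enough. The Kac-Rice conditioning on $\nabla \bar{f}_{\boldsymbol{\sigma}}(0) = 0$ perturbs the field only by a finite-rank Gaussian regression correction whose contribution to $\bar{f}_{\boldsymbol{\sigma}}^{(2)}$ is of the same order, so the bound persists.

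The principal difficulty lies in the sup bound for $i = 6$: the Taylor-remainder field is non-stationary with variance that degenerates at the origin, and the chaining must account for this non-uniform geometry under the Kac-Rice conditioning. This is exactly what the covariance computations of Section \ref{subsec:covs} and the metric entropy estimates of Section \ref{sub:Metric-entropies} are designed to provide. By comparison, the $i = 4$ case is standard, reducing to quantitative concentration of the GOE spectrum, for which sharp bounds are available from random matrix theory.
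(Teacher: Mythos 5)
Your overall plan mirrors the paper's: Kac--Rice reduction via Lemma~\ref{lem:K-R}, then random-matrix concentration for $i=4$ and a Dudley/Borell--TIS chaining bound followed by a conditioning transfer for $i=6$, using the covariance structure of Corollary~\ref{cor:1}. However, there are two genuine gaps in the way you set up each step.

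First, the reduction step is wrong as stated. You claim that the normalized determinant factor ``is itself uniformly bounded on $\{g_1(\mathbf{n})\in B_1\}$,'' so that it suffices for the conditional probability to merely tend to zero. This is false: on $B_1$ every eigenvalue of $\nabla^2 f(\mathbf n)/\sqrt{(N-1)p(p-1)}$ is indeed bounded away from $0$ and $\infty$, but there are $N-1$ of them, so the determinant is of order $e^{aN}$ for some $a>0$. Likewise the prefactor $\omega_N\left((N-1)p(p-1)\right)^{(N-1)/2}\varphi_{\nabla f(\mathbf n)}(0)$ is exponentially large, and it is only cancelled against $e^{-u^2/2}$ and the determinant \emph{inside the expectation}; since $g_4$ is a deterministic function of $\nabla^2 f(\mathbf n)$, you cannot factor the indicator out of that expectation. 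The paper instead crudely bounds prefactor, $e^{-u^2/2}$ and the determinant on $B_1$ each by $e^{KN}$ and then proves that $\frac{1}{N}\log\mathbb{P}\{g_4\notin B_4,\,g_1\in B_1\mid f(\mathbf n)=u\}\to-\infty$ uniformly, chosing $\tau_{\epsilon,\delta}(N)\to 0$ so that $N\tau_{\epsilon,\delta}(N)^2\to\infty$; an exponential rate is essential, not merely convergent tolerance. Your reference to ``exponentially fast concentration'' suggests you sensed this, but the summary of the reduction would let through an argument that proves only concentration in probability, which is insufficient.

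Second, for $i=6$ the transfer from an unconditional Borell--TIS sup bound to the bound conditional on $\nabla f(\mathbf n)=0$ is not established. You assert that the conditioning ``perturbs the field only by a finite-rank Gaussian regression correction whose contribution \ldots is of the same order, so the bound persists,'' but this is the wrong picture: conditioning on $\nabla f(\mathbf n)=0$ does not additively perturb $\bar f_{\mathbf n}^{(2)}$, it replaces its Gaussian law by the centered law with the Schur-complement covariance. Moreover the unconditional remainder field $\bar f_{\mathbf n}^{(2)}$ is \emph{not} independent of $\nabla f(\mathbf n)$, so some argument is needed. The paper resolves this cleanly by observing that $\{|\sqrt N\bar f_{\mathbf n}^{(2)}(x_i)|\le CN^{1-3\alpha}\}_i$ is a symmetric convex set and then applying Anderson's inequality, which shows that conditioning on any centered linear Gaussian event can only increase the probability of such a set. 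Your proof needs either this argument or a concrete quantitative bound on the regression component; as written, the ``same order'' claim is a gap, and the required tail must in any case be $e^{-KN}$ (with $K$ made large by increasing $C$), not $1-o(1)$ as you state.
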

 
\begin{lem}
\label{lem:giBi}For $i\neq1,\,4,\,6$ we have the following. For
any $L>0$, there exists a sequence $e_{\epsilon,\delta}\left(N\right)$
satisfying $\lim_{N\to\infty}e_{\epsilon,\delta}\left(N\right)=0$,
such that 
\begin{equation}
\mathbb{P}\left\{ \left.g_{i}\left(\mathbf{n}\right)\notin B_{i}\,\right|\,\nabla^{2}f(\mathbf n)=\mathbf{A}_{N-1},\,\nabla f(\mathbf n)=0,\,f(\mathbf n)=u\right\} \leq e_{\epsilon,\delta}\left(N\right),\label{eq:6-1}
\end{equation}
uniformly over all $N\geq1$, $u\in\frac{1}{\sqrt{N}}\left(m_{N}-L,\,m_{N}+L\right)$,
and $\mathbf{A}_{N-1}\in B_{1}$.
\end{lem}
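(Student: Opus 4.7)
The plan is to treat each of the five relevant indices $i\in\{2,3,5,7,8\}$ separately, after recording two structural observations that drive every case. First, the chart $P$ is isometric at $0$ and $\partial_{i}\partial_{j}P_{N}(0)=-\delta_{ij}$ cancels exactly against the (vanishing) radial derivative of (any radial extension of) $f$, so at $x=0$ the Euclidean gradient and Hessian of $\bar{f}_{\mathbf{n}}$ agree with the Riemannian gradient and Hessian of $f$ at $\mathbf{n}$ in the canonical frame. Hence the conditioning on the right-hand side of \eqref{eq:6-1} fixes $\bar{f}_{\mathbf{n}}(0)=u$, $\nabla\bar{f}_{\mathbf{n}}(0)=0$, and $\nabla^{2}\bar{f}_{\mathbf{n}}(0)=\mathbf{A}_{N-1}$. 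Second, because $H_{N}$ and $H_{N}^{\prime}$ are independent, the conditioning leaves the law of the perturbation field $\bar{f}_{\mathbf{n}}^{\prime}$ intact; differentiating $W(x,y)^{p}$ once in each argument at $x=y=0$ identifies $g:=\nabla\bar{f}_{\mathbf{n}}^{\prime}(0)\sim N(0,p\mathbf{I}_{N-1})$ under the conditional measure.

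With these in hand the index $i=7$ becomes deterministic: $g_{7}(\mathbf{n})=\frac{\sqrt{N}}{2}\lambda_{\min}(\mathbf{A}_{N-1})N^{-2\alpha}$, and $\mathbf{A}_{N-1}\in\sqrt{N}\mathcal{V}(\delta)$ forces $\lambda_{\min}(\mathbf{A}_{N-1})>\sqrt{N}K_{p,\delta}$, so $g_{7}(\mathbf{n})>\tfrac{1}{2}K_{p,\delta}N^{1-2\alpha}$ almost surely under the conditional measure, and the exceptional probability is zero. For $i\in\{2,3,8\}$ each $g_{i}(\mathbf{n})$ is an explicit (linear, scalar, or quadratic) functional of $g$ whose coefficients are controlled by the spectral bounds coming from $\mathbf{A}_{N-1}\in B_{1}$, namely $\|\mathbf{A}_{N-1}^{-1}\|_{\mathrm{op}}\leq(\sqrt{N}K_{p,\delta})^{-1}$. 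Cases $i=2,8$ then reduce to Gaussian concentration of $\|g\|$ around $\sqrt{p(N-1)}$ (concentration of $\chi^{2}_{N-1}$), while $i=3$, being up to sign and normalization the centered quadratic form $g^{T}\mathbf{A}_{N-1}^{-1}g-p\,\mathrm{Tr}(\mathbf{A}_{N-1}^{-1})$ of variance $2p^{2}\mathrm{Tr}(\mathbf{A}_{N-1}^{-2})\leq 2p^{2}/K_{p,\delta}^{2}$ uniformly in $\mathbf{A}_{N-1}\in B_{1}$, follows from a Chebyshev (or Hanson--Wright) inequality. Each of these yields a bound $e_{\epsilon,\delta}(N)\to0$ that is uniform in $u$ and $\mathbf{A}_{N-1}$, as required.

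The main obstacle is the case $i=5$. By Taylor's theorem, $\bar{f}_{\mathbf{n}}^{(1)}(x)=\tfrac{1}{2}x^{T}\nabla^{2}\bar{f}_{\mathbf{n}}^{\prime}(\xi(x))x$ for some $\xi(x)$ on the segment $[0,x]$, so the required bound $g_{5}(\mathbf{n})\leq CN^{1/2-2\alpha}$ reduces to the uniform operator-norm estimate $\sup_{\|\xi\|\leq N^{-\alpha}}\|\nabla^{2}\bar{f}_{\mathbf{n}}^{\prime}(\xi)\|_{\mathrm{op}}\leq C\sqrt{N}$ with overwhelming probability. Since $\bar{f}_{\mathbf{n}}^{\prime}$ is independent of the conditioning, the estimate need only be proved unconditionally, and this is precisely the kind of uniform bound that the metric-entropy / Gaussian chaining results of Section \ref{sub:Metric-entropies} are designed to deliver, applied to the matrix-valued centered Gaussian field $\nabla^{2}\bar{f}_{\mathbf{n}}^{\prime}$ (the one-point operator-norm estimate being standard for a GOE-like matrix of entry variance $O(N)$, and the supremum over a ball of radius $N^{-\alpha}$ being controlled by the smoothness of the covariance). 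This uniform Hessian bound is the substantive step; the remaining four indices are routine once the two structural observations above have been unpacked.
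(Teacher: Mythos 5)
Your treatment of the cases $i=2,3,7,8$ matches the paper's proof in substance: the conditioning fixes $(\bar f_{\mathbf n}(0),\nabla\bar f_{\mathbf n}(0),\nabla^2\bar f_{\mathbf n}(0))$ via the identification \eqref{eq:E}, leaves the law of $\nabla\bar f_{\mathbf n}^\prime(0)\sim N(0,p\mathbf I)$ untouched, and then $i=7$ is deterministic from $\lambda_{\min}(\mathbf A_{N-1})>\sqrt N K_{p,\delta}$, while $i=2,8$ reduce to concentration of a Chi variable and $i=3$ to Chebyshev for the centered quadratic form $\frac{p}{2\sqrt N}\sum_i(1-W_i^2)/\lambda_i(\mathbf A_{N-1})$ (your stated variance omits the $\frac{1}{4N}$ prefactor, but the conclusion is unaffected). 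These four cases are fine and essentially identical to the paper's argument.

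The case $i=5$ is where you diverge, and where there is a genuine gap. The paper does \emph{not} pass through the Hessian of $\bar f^{\prime}_{\mathbf n}$: it works directly with the scalar error field $\bar f^{(1)}_{\mathbf n}$, computes its canonical metric explicitly from the covariance $W(x,y)^p$ (Lemma \ref{lem:entropy_est}, giving $d_{\bar f^{(1)}}(x,y)\leq a_p^{(1)}R\|x-y\|$ on $\mathcal B_{N-1}(0,R)$), feeds this into the Dudley bound of Lemma \ref{lem:generic entropy bd} to get $\mathbb E\{\sup\bar f^{(1)}\}\lesssim N^{1/2-2\alpha}$, and concludes with Borell--TIS using the pointwise variance bound $(a_p^{(1)})^2N^{-4\alpha}$. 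Your route via the Taylor remainder $\bar f^{(1)}(x)=\int_0^1(1-t)\,x^T\nabla^2\bar f^{\prime}_{\mathbf n}(tx)x\,dt$ would indeed give the right order \emph{if} you had the estimate $\sup_{\|\xi\|\leq N^{-\alpha}}\|\nabla^2\bar f^{\prime}_{\mathbf n}(\xi)\|_{\mathrm{op}}\leq C\sqrt N$ with probability tending to $1$, but that estimate is not delivered by the tools you cite: Lemma \ref{lem:generic entropy bd} applies to a scalar Gaussian field on a Euclidean ball, whereas the operator norm is a supremum of the Gaussian field $(x,v)\mapsto v^T\nabla^2\bar f^{\prime}_{\mathbf n}(x)v$ over $\mathcal B_{N-1}(0,N^{-\alpha})\times\mathbb S^{N-2}$, so one must additionally chain over the sphere of directions (contributing entropy of order $N$) and verify a Lipschitz bound for the canonical metric in both variables --- the latter requiring exactly the kind of covariance-derivative computation that Lemma \ref{lem:entropy_est} performs for the scalar field. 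Asserting that the bound is ``precisely the kind the metric-entropy results are designed to deliver'' does not close this; you would need to carry out that two-parameter chaining (or an $\varepsilon$-net argument over directions) explicitly, at which point the paper's direct approach to $\bar f^{(1)}$ is both shorter and already uses the covariance identities of Lemma \ref{lem:cov_fbar} that you would need anyway.
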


\subsection{\label{sub:pf_good_whp}Proof of Lemma \ref{lem:good_whp} assuming
	Lemmas \ref{lem:g1B1}, \ref{lem:g4B4} and \ref{lem:giBi}}

By Lemmas \ref{lem:g1B1} and \ref{lem:g4B4} the mean number of critical
points $\sqrt{N}\boldsymbol{\sigma}\in\mathscr{C}_{N}\left(L\right)$
for which $\mathcal{A}_{i}$ does not occur for some $i\in\left\{ 1,4,6\right\} $
goes to $0$ as $N\to\infty$. We shall prove that the mean number
of critical points $\sqrt{N}\boldsymbol{\sigma}\in\mathscr{C}_{N}\left(L\right)$
such that $g_{i}\left(\boldsymbol{\sigma}\right)\notin B_{i}$ and
$g_{1}\left(\boldsymbol{\sigma}\right)\in B_{1}$ also goes to $0$
as $N\to\infty$, for any $i\in\left\{ 1,...,8\right\} \setminus\left\{ 1,4,6\right\} $.
From this it will follow that the mean number of critical points $\sqrt{N}\boldsymbol{\sigma}\in\mathscr{C}_{N}\left(L\right)$
which are not good goes to $0$ as $N\to\infty$, and by Markov's
inequality Lemma \ref{lem:good_whp} will follow.

Using Lemma \ref{lem:K-R} and  Lemma \ref{lem:giBi}, by conditioning
in (\ref{eq:35}) on $f\left(\mathbf{n}\right)$ and $\nabla^{2}f\left(\mathbf{n}\right)$,
in addition to $\nabla f\left(\mathbf{n}\right)=0$, we have that
\begin{align*}
	& \!\!\!\!\!\! \mathbb{E}\left\{ \#\left\{ \sqrt{N}\boldsymbol{\sigma}\in\mathscr{C}_{N}\left(L\right):\,g_{i}\left(\boldsymbol{\sigma}\right)\notin B_{i},\,g_{1}\left(\boldsymbol{\sigma}\right)\in B_{1}\right\} \right\} \\
	& \leq e_{\epsilon,\delta}\left(N\right)\cdot\omega_{N}\left(\left(N-1\right)p\left(p-1\right)\right)^{\frac{N-1}{2}}\varphi_{\nabla f\left(\mathbf{n}\right)}\left(0\right)\\
	&\mathbb{E}\left\{ \left|\det\left(\frac{\nabla^{2}f\left(\mathbf{n}\right)}{\sqrt{\left(N-1\right)p\left(p-1\right)}}\right)\right|\mathbf{1}\Big\{\left|\sqrt{N}f\left(\mathbf{n}\right)-m_{N}\right|<L\Big\}\,\Bigg|\,\nabla f\left(\mathbf{n}\right)=0\right\} \\
	& =e_{\epsilon,\delta}\left(N\right)\cdot\mathbb{E}\left\{ \#\left\{ \boldsymbol{\sigma}\in\mathscr{C}_{N}\left(L\right)\right\} \right\} ,
\end{align*}
for any $i\in\left\{ 1,...,8\right\} \setminus\left\{ 1,4,6\right\} $.
By Proposition \ref{prop:intensity}, $\mathbb{E}\left\{ \#\left\{ \boldsymbol{\sigma}\in\mathscr{C}_{N}\left(L\right)\right\} \right\} $
converges to a finite number, and thus the expectation above goes
to $0$ as $N\to\infty$. This completes the proof. \qed

\subsection{\label{subsec:covs}Covariances and the conditional law of the Hessian}

In this section we state two results about the covariance of the (normalized) Hamiltonian, its gradient and its Hessian at a point and about the conditional law of the Hessian given the value of the Hamiltonian at a point. These result will be used in the proof of the lemmas of Section \ref{sub:Auxiliary-results}. 

With the map
\begin{align*}
	P:\,&\{x\in\mathbb R^{N-1}: \Vert x \Vert_2 < 1\}\longrightarrow \left\{ \boldsymbol{\sigma}\in\mathbb{S}^{N-1}:\, \langle \boldsymbol{\sigma},\mathbf n \rangle>0\right\},\\
	&(x_1,...,x_{N-1})\longmapsto \left(x_{1},...,x_{N-1},\sqrt{1-\left\Vert x\right\Vert ^{2}}\right),
\end{align*}
we have that
\[
f\circ P(x)=\bar{f}_{\mathbf{n}}\left(x\right),
\]
where  $\bar{f}_{\boldsymbol{\sigma}}(x)$ is defined in \eqref{eq:33}.

We note that there exists a smooth  orthonormal frame field $E=(E_i)$ defined on some neighborhood of $\mathbf n$ in $\mathbb S^{N-1}$ such that\footnote{This can be seen by the following. Letting $\left\{ \frac{\partial}{\partial x_{i}}\right\} _{i=1}^{N-1}$ denote the pushforward of $\left\{ \frac{d}{d x_{i}}\right\} _{i=1}^{N-1}$ by $P$ we have that at the north pole, $\left\{  \frac{\partial}{\partial x_{i}} (\mathbf n)\right\} _{i=1}^{N-1}$ is an orthonormal frame. For any point $\boldsymbol{\sigma}$ in a small neighborhood of $\mathbf n$ we can define an orthonormal frame as the parallel transport of $\left\{ \frac{\partial}{\partial x_{i}}(\mathbf n)\right\} _{i=1}^{N-1}$ along the geodesic connecting $\mathbf n$ and $\boldsymbol{\sigma}$. This yields a smooth orthonormal frame field on this neighborhood, say $E_i (\boldsymbol{\sigma})=\sum_{j=1}^{N-1}a_{ij} (\boldsymbol{\sigma})\frac{\partial}{\partial x_{j}}(\boldsymbol{\sigma})$, $i=1,...,N-1$. Working with the coordinate system $P$ one can verify that at $x=0$ the Christoffel symbols $\Gamma_{ij}^k$ are equal to $0$, and therefore (see e.g. \cite[Eq. (2), P. 53]{DoCarmo}) the derivatives $\frac{d}{dx_k}a_{ij}(P(x))$ at $x=0$ are also equal to $0$.}
\begin{equation}
\label{eq:E}\left\{ f\left(\mathbf{n}\right),\nabla f\left(\mathbf{n}\right),\nabla^{2}f\left(\mathbf{n}\right)\right\}   =\left\{ \bar{f}_{\mathbf{n}}\left(0\right),\nabla\bar{f}_{\mathbf{n}}\left(0\right),\nabla^{2}\bar{f}_{\mathbf{n}}\left(0\right)\right\} ,
\end{equation}
where  $\nabla\bar{f}_{\mathbf{n}}(0)$ and $\nabla^{2}\bar{f}_{\mathbf{n}}(0)$
are the usual (Euclidean) gradient and Hessian, while $\nabla f\left(\mathbf{n}\right)$ and $\nabla^2 f\left(\mathbf{n}\right)$ are defined in \eqref{eq:gradHess11}.

%
Thus, if we set in Lemma \ref{lem:cov_fbar} below $x=0$ in the first three equations, we have that Lemma \ref{lem:cov_fbar} and Corollary \ref{cor:1} follow from \cite[Lemma 3.2]{A-BA-C} (where a different normalization is used for GOE matrices). The proof of \cite[Lemma 3.2]{A-BA-C} consists of computing  derivatives and using the well-known relation (cf. \cite[eq. (5.5.4)]{RFG}),
\begin{align}
\label{eq:derivatives}	
& \mathbb E\left\{ \frac{d^{k}}{d x_{i_{1}}\cdots d x_{i_{k}}}\bar{f}_{\mathbf n}\left(x\right)\cdot\frac{d^{l}}{d y_{i_{1}}\cdots d y_{i_{l}}}\bar{f}_{\mathbf n}\left(y\right)\right\}\\ &\quad =\frac{d^{k}}{d x_{i_{1}}\cdots d x_{i_{k}}}\frac{d^{l}}{d y_{i_{1}}\cdots d y_{i_{l}}}\mathbb E \left\{ \bar{f}_{\mathbf n}\left(x\right)\cdot\bar{f}_{\mathbf n}\left(y\right)\right\}.\nonumber
\end{align}
The first equation in Lemma \ref{lem:cov_fbar} with general $x$ follows from the definition of $W(x,y)$ given in \eqref{eq:W}. From \eqref{eq:derivatives}, the second and third equations in Lemma  \ref{lem:cov_fbar} follow with general $x$.
\begin{lem}
	\cite[Lemma 3.2]{A-BA-C}\label{lem:cov_fbar} For any $x\in \mathcal B_{N-1}\left(0,1\right)$,
	\begin{align*}
		\mathbb{E}\left\{ \bar{f}_{\mathbf n}\left(x\right)\cdot\bar{f}_{\mathbf n}\left(0\right)\right\}  & =\left(W\left(0,x\right)\right)^{p},\\
		\mathbb{E}\left\{ \bar{f}_{\mathbf n}\left(x\right)\cdot\frac{d}{d x_{i}}\bar{f}_{\mathbf n}\left(0\right)\right\}  & =p\left(W\left(0,x\right)\right)^{p-1}x_{i},\\
		\mathbb{E}\left\{ \bar{f}_{\mathbf n}\left(x\right)\cdot\frac{d}{d x_{i}}\frac{d}{d x_{j}}\bar{f}_{\mathbf n}\left(0\right)\right\}  & =-\delta_{ij}p\left(W\left(0,x\right)\right)^{p}+p\left(p-1\right)\left(W\left(0,x\right)\right)^{p-2}x_{i}x_{j},\\
		\mathbb{E}\left\{ \frac{d}{d x_{i}}\bar{f}_{\mathbf n}\left(0\right)\cdot\frac{d}{d x_{j}}\bar{f}_{\mathbf n}\left(0\right)\right\}  & =\delta_{ij}p,\\
		\mathbb{E}\left\{ \frac{d}{d x_{i}}\frac{d}{d x_{j}}\bar{f}_{\mathbf n}\left(0\right)\cdot\frac{d}{d x_{k}}\bar{f}_{\mathbf n}\left(0\right)\right\}  & =0.\\
		\mathbb{E}\left\{ \frac{d}{d x_{i}}\frac{d}{d x_{j}}\bar{f}_{\mathbf n}\left(0\right)\cdot\frac{d}{d x_{k}}\frac{d}{d x_{l}}\bar{f}_{\mathbf n}\left(0\right)\right\}  & =p\left(p-1\right)\left[\delta_{ij}\delta_{kl}+\delta_{il}\delta_{jk}+\delta_{ik}\delta_{jl}\right]+p\delta_{ij}\delta_{kl}.
	\end{align*}
\end{lem}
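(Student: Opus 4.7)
The plan is to derive all six identities as direct consequences of the base covariance formula $\mathbb{E}\{\bar f_{\mathbf n}(x)\bar f_{\mathbf n}(y)\}=(W(x,y))^{p}$ from \eqref{eq:W}, together with the commutation relation \eqref{eq:derivatives}, which lets us pull all partial derivatives outside the expectation. Once that is done, every claim in the lemma becomes a finite (and explicit) evaluation of partial derivatives of $W(x,y)^{p}$ at $y=0$ or at $x=y=0$.

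The key preliminary calculations are: $W(x,0)=\sqrt{1-\|x\|^{2}}$ and, for $i,j\in\{1,\dots,N-1\}$,
\[
\frac{\partial}{\partial y_{i}}W(x,y)\Big|_{y=0}=x_{i},\qquad \frac{\partial^{2}}{\partial y_{i}\partial y_{j}}W(x,y)\Big|_{y=0}=-\delta_{ij}W(0,x),
\]
which follow from differentiating $W(x,y)=\langle x,y\rangle+\sqrt{1-\|x\|^{2}}\sqrt{1-\|y\|^{2}}$. The first identity of the lemma is trivial (the symmetry $W(x,0)=W(0,x)$ handles the relabeling), and the second and third follow by applying the chain rule to $W(x,y)^{p}$:
\[
\frac{\partial}{\partial y_{i}}W(x,y)^{p}=pW(x,y)^{p-1}\frac{\partial W}{\partial y_{i}},
\]
\[
\frac{\partial^{2}}{\partial y_{i}\partial y_{j}}W(x,y)^{p}=p(p-1)W(x,y)^{p-2}\frac{\partial W}{\partial y_{i}}\frac{\partial W}{\partial y_{j}}+pW(x,y)^{p-1}\frac{\partial^{2}W}{\partial y_{i}\partial y_{j}},
\]
and plugging in the preliminary evaluations at $y=0$.

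For the last three identities, I would avoid redoing any calculus and simply differentiate the already-derived formulas with respect to the free variable $x$ and evaluate at $x=0$. Explicitly, differentiating the second formula with respect to $x_{j}$ at $x=0$ uses only that $W(0,0)=1$ and that $\partial_{x_{j}}W(0,x)^{p-1}|_{x=0}=0$ (since it is proportional to $x_{j}$), giving $p\delta_{ij}$. Differentiating the third formula with respect to $x_{k}$ at $x=0$ produces a sum of terms each containing at least one factor $x_{i}$, $x_{j}$, or $\partial_{x_{k}}W(0,x)|_{x=0}=0$, so the whole expression vanishes. Finally, differentiating the third formula twice in $x$ at $x=0$ yields two nonzero contributions: from $-\delta_{ij}pW(0,x)^{p}$ one gets $p^{2}\delta_{ij}\delta_{kl}$ using $\partial^{2}_{x_{k}x_{l}}W(0,x)^{p}|_{x=0}=-p\delta_{kl}$, and from $p(p-1)W(0,x)^{p-2}x_{i}x_{j}$ one gets $p(p-1)(\delta_{il}\delta_{jk}+\delta_{ik}\delta_{jl})$, the other terms vanishing because they carry unused factors of $x_{r}$. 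Rearranging produces the stated $p(p-1)[\delta_{ij}\delta_{kl}+\delta_{il}\delta_{jk}+\delta_{ik}\delta_{jl}]+p\delta_{ij}\delta_{kl}$.

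There is no substantive obstacle; the proof is entirely bookkeeping. The only mild care required is tracking which monomial factors $x_{r}$ appear in each intermediate derivative, so as to correctly identify which terms vanish upon final evaluation at $x=0$. Since this lemma is essentially a restatement of \cite[Lemma 3.2]{A-BA-C} with general $x$ in the first three identities (the latter being immediate from the definition \eqref{eq:W}), it suffices to point to that reference and note that extending to $x\neq 0$ changes nothing in the derivation.
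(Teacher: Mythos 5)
Your proposal is correct and follows essentially the same route as the paper, which simply cites \cite[Lemma 3.2]{A-BA-C} for the $x=0$ case and notes that the general-$x$ versions of the first three identities follow from the covariance formula \eqref{eq:W} together with the derivative-commutation relation \eqref{eq:derivatives}; you merely carry out explicitly the differentiation of $(W(x,y))^p$ that the cited reference performs. All of your intermediate evaluations (in particular $\partial^2_{y_iy_j}W(x,y)|_{y=0}=-\delta_{ij}W(0,x)$ and the recombination $p^2\delta_{ij}\delta_{kl}=p(p-1)\delta_{ij}\delta_{kl}+p\delta_{ij}\delta_{kl}$ in the last identity) check out.
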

\begin{cor}
	\cite[Lemma 3.2]{A-BA-C}\label{cor:1} The gradient $\nabla f(\mathbf n)$
	is a centered Gaussian vector with i.i.d entries of variance $p$,
	and it is independent of $\nabla^{2}f(\mathbf n)$ and $f(\mathbf n)$.
	Conditional on $f(\mathbf n)=u$, $\nabla^{2}f(\mathbf n)$
	has the same distribution as 
	\[
	\sqrt{\left(N-1\right)p\left(p-1\right)}\mathbf{M}-up\mathbf{I},
	\]
	where $\mathbf{M}=\mathbf{M}_{N-1}$ is a GOE matrix and $\mathbf{I}=\mathbf{I}_{N-1}$
	is the identity matrix, both of dimension $N-1$. 
\end{cor}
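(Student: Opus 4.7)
The plan is to read off the corollary directly from Lemma \ref{lem:cov_fbar} via the identification \eqref{eq:E}, which lets me work with $(\bar f_{\mathbf n}(0), \nabla\bar f_{\mathbf n}(0), \nabla^2\bar f_{\mathbf n}(0))$ in place of $(f(\mathbf n), \nabla f(\mathbf n), \nabla^2 f(\mathbf n))$. All three quantities are centered jointly Gaussian (linear functionals of the Gaussian Hamiltonian), so \emph{uncorrelated implies independent}, and the whole argument reduces to bookkeeping of the covariances supplied by Lemma \ref{lem:cov_fbar}.

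First I would treat the gradient. The fourth identity of Lemma \ref{lem:cov_fbar} gives $\mathbb{E}\{\partial_i\bar f_{\mathbf n}(0)\,\partial_j\bar f_{\mathbf n}(0)\}=p\delta_{ij}$, so $\nabla f(\mathbf n)\sim N(0,p\mathbf I_{N-1})$ with i.i.d.\ entries. Setting $x=0$ in the second identity kills the factor $x_i$ and yields $\mathbb{E}\{f(\mathbf n)\,\partial_i\bar f_{\mathbf n}(0)\}=0$, while the fifth identity asserts directly that every Hessian entry is uncorrelated with every gradient entry. Joint Gaussianity then gives independence of $\nabla f(\mathbf n)$ from the pair $(f(\mathbf n),\nabla^2 f(\mathbf n))$.

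Next I would identify the conditional law of the Hessian. Define the ``recentered'' Hessian
\[
\mathbf H \;\triangleq\; \nabla^2 f(\mathbf n) + p\,f(\mathbf n)\,\mathbf I.
\]
Evaluating the third identity of Lemma \ref{lem:cov_fbar} at $x=0$ (so that $W(0,0)=1$) gives $\mathbb{E}\{f(\mathbf n)[\nabla^2 f(\mathbf n)]_{ij}\} = -p\delta_{ij}$, which combined with $\mathrm{Var}(f(\mathbf n))=1$ makes $\mathbf H$ uncorrelated with $f(\mathbf n)$ and hence independent of it. A short bilinear expansion using the sixth identity then shows, after the cross-terms cancel the $p\delta_{ij}\delta_{kl}$ piece of the raw Hessian covariance,
\[
\mathbb{E}\{\mathbf H_{ij}\mathbf H_{kl}\} \;=\; p(p-1)\left[\delta_{ik}\delta_{jl} + \delta_{il}\delta_{jk}\right].
\]
This is exactly the covariance of $\sqrt{(N-1)p(p-1)}\,\mathbf M$ for an $(N-1)\times(N-1)$ GOE matrix $\mathbf M$ in the paper's normalization $\mathbb{E}\{\mathbf M_{ij}^2\}=(1+\delta_{ij})/(N-1)$. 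Since both sides are centered Gaussian with matching covariances, $\mathbf H \stackrel{d}{=} \sqrt{(N-1)p(p-1)}\,\mathbf M$, with $\mathbf M$ independent of $f(\mathbf n)$. Conditioning on $f(\mathbf n)=u$ and solving for $\nabla^2 f(\mathbf n) = \mathbf H - pu\,\mathbf I$ yields precisely the stated distribution.

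There is essentially no obstacle: the only care required is the arithmetic cancellation
$\,p(p-1)\delta_{ij}\delta_{kl} + p\delta_{ij}\delta_{kl} - 2p^2\delta_{ij}\delta_{kl} + p^2\delta_{ij}\delta_{kl} = 0\,$
in the covariance of $\mathbf H$, and correctly matching the GOE normalization used in the paper so that the scaling factor comes out as $\sqrt{(N-1)p(p-1)}$ rather than (say) $\sqrt{p(p-1)}$ or $\sqrt{Np(p-1)}$.
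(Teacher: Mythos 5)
Your proposal is correct and follows the same route the paper takes: the paper simply quotes Corollary \ref{cor:1} from \cite[Lemma 3.2]{A-BA-C}, noting it follows from the covariance identities of Lemma \ref{lem:cov_fbar} at $x=0$ together with \eqref{eq:E}, and your argument just writes out that routine verification (the decorrelation of $\mathbf H=\nabla^2 f(\mathbf n)+p f(\mathbf n)\mathbf I$ from $f(\mathbf n)$, the cancellation leaving $p(p-1)[\delta_{ik}\delta_{jl}+\delta_{il}\delta_{jk}]$, and the matching of the paper's GOE normalization). All the computations check out.
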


\subsection{\label{sub:Metric-entropies}Metric entropies}

The last ingredient we need before turning to the proof of Lemmas \ref{lem:g1B1}, \ref{lem:g4B4} and \ref{lem:giBi} are bounds on metric entropies. Those will be used to prove the bounds related to $g_{5}\left(\boldsymbol{\sigma}\right)$ and $g_{6}\left(\boldsymbol{\sigma}\right)$.

For a random field $w:\,T\to\mathbb{R}$, the canonical (pseudo) metric
is defined by 
\begin{equation}
d_{w}\left(x,y\right)\triangleq\sqrt{\mathbb{E}\left\{ \left(w\left(x\right)-w\left(y\right)\right)^{2}\right\} },\,\,\,\forall x,\,y\in T.\label{eq:can_met}
\end{equation}

\begin{lem}
	\label{lem:generic entropy bd}Suppose $w$ is a Gaussian field on
	the Euclidean ball of radius $R$,
	\[
	T\triangleq\mathcal{B}_{N}\left(0,R\right)=\left\{ x\in\mathbb{R}^{N}:\,\left\Vert x\right\Vert \leq R\right\} ,
	\]
	such that 
	\begin{equation}
	d_{w}\left(x,y\right)\leq\tau\cdot\left\Vert x-y\right\Vert ,\,\,\,\forall x,\,y\in T,\label{eq:1}
	\end{equation}
	for some $\tau>0$. Then
	\[
	\mathbb{E}\left\{ \sup_{x\in T}w\left(x\right)\right\} \leq \kappa \tau R\sqrt{N},
	\]
	where $\kappa>0$ is a universal constant.
\end{lem}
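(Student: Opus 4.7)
The plan is to apply Dudley's entropy integral bound for Gaussian suprema combined with the standard volume estimate for covering numbers of Euclidean balls. Recall that for a centered Gaussian field $w$ on a totally bounded pseudo-metric space $(T, d_w)$, Dudley's inequality gives
\[
\mathbb{E}\bigl\{\sup_{x\in T} w(x)\bigr\} \leq K \int_0^{\mathrm{diam}_{d_w}(T)/2} \sqrt{\log N(T, d_w, \epsilon)}\, d\epsilon,
\]
for a universal constant $K > 0$, where $N(T,d_w,\epsilon)$ denotes the minimal number of $d_w$-balls of radius $\epsilon$ needed to cover $T$.

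The first step is to translate the $d_w$-covering numbers into Euclidean ones using the Lipschitz hypothesis \eqref{eq:1}. Since $d_w(x,y) \leq \tau \|x-y\|$, any Euclidean ball of radius $\epsilon/\tau$ is contained in a $d_w$-ball of radius $\epsilon$, and hence
\[
N(T, d_w, \epsilon) \leq N\bigl(T, \|\cdot\|, \epsilon/\tau\bigr).
\]
The second step is the classical volume-comparison bound for covering the Euclidean ball $T = \mathcal{B}_N(0,R)$: for any $\delta > 0$,
\[
N(T, \|\cdot\|, \delta) \leq \left(\frac{3R}{\delta}\right)^N,
\]
(and the left-hand side equals $1$ once $\delta \geq R$). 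Combining these two facts yields $N(T, d_w, \epsilon) \leq (3R\tau/\epsilon)^N$ whenever $\epsilon \leq 3R\tau$, and $N(T,d_w,\epsilon)=1$ for $\epsilon \geq R\tau$. In particular $\mathrm{diam}_{d_w}(T) \leq 2R\tau$.

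The final step is to plug these bounds into Dudley's integral. Substituting $u = \epsilon/(R\tau)$,
\[
\int_0^{R\tau} \sqrt{\log N(T,d_w,\epsilon)}\, d\epsilon \leq \int_0^{R\tau} \sqrt{N \log(3R\tau/\epsilon)}\, d\epsilon = R\tau \sqrt{N} \int_0^1 \sqrt{\log(3/u)}\, du,
\]
and the last integral is a finite universal constant. This gives the stated bound with $\kappa = K \int_0^1 \sqrt{\log(3/u)}\, du$ (up to absorbing the factor $K$). There is no real obstacle here; the argument is entirely routine Gaussian-process machinery, and the only point requiring a word of care is verifying that Dudley's inequality is used in the form that bounds the expected supremum (as opposed to the expected supremum of $|w(x)-w(y)|$), but up to the universal constant this distinction does not affect the conclusion.
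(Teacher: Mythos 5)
Your proposal is correct and follows essentially the same route as the paper's proof: both convert $d_w$-covering numbers to Euclidean ones via the Lipschitz hypothesis, bound the latter by a volume argument to get an entropy of order $N\log(C\tau R/\epsilon)$, and conclude with Dudley's entropy integral over $[0,\tau R]$ after a change of variables. The only cosmetic difference is that the paper passes through packing numbers (yielding the constant $2$ where you have $3$), which does not affect the result.
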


\begin{proof}
	
For any $\epsilon>0$, the metric entropy (or $\epsilon$-covering
number) of $T$, denoted by $N\left(T,d_{w},\epsilon\right)$, is
the smallest number of balls,
\[
\mathcal{B}_{d_{w}}\left(x,\epsilon\right)\triangleq\left\{ y\in T:\,d_{w}\left(x,y\right)\leq\epsilon\right\} ,
\]
required to cover $T$. The log-entropy is defined by
\[
H\left(\epsilon\right)=H\left(T,d_{w},\epsilon\right)\triangleq\log\left(N\left(T,d_{w},\epsilon\right)\right).
\]

With $d_{E}\left(x,y\right)$ denoting the Euclidean distance, from
(\ref{eq:1}) we have 
\[
N\left(T,d_{w},\epsilon\right)\leq N\left(T,d_{E},\tau^{-1}\epsilon\right).
\]
Let $P\left(T,d_{E},\epsilon\right)$ be the packing number of $T$;
that is, the largest number of disjoint $\epsilon$-balls (w.r.t $d_{E}$)
in $T$. Then we have that
\[
N\left(T,d_{E},\tau^{-1}\epsilon\right)\leq P\left(T,d_{E},\tau^{-1}\epsilon/2\right).
\]
From volume considerations,
\[
P\left(T,d_{E},\epsilon/2\right)\leq\frac{{\rm Vol}\left(\mathcal{B}_{N}\left(0,R\right)\right)}{{\rm Vol}\left(\mathcal{B}_{N}\left(0,\tau^{-1}\epsilon/2\right)\right)}\leq\left(\frac{R}{\tau^{-1}\epsilon/2}\right)^{N}.
\]
Thus,
\[
H\left(T,d_{w},\epsilon\right)\leq N\log\left(\frac{2\tau R}{\epsilon}\right).
\]

Dudley's entropy bound (see \cite[Theorem 1.3.3]{RFG}) then gives
\[
\mathbb{E}\left\{ \sup_{x\in T}w\left(x\right)\right\} \leq \kappa'\int_{0}^{\tau R}\sqrt{H\left(T,d_{w},\epsilon\right)}d\epsilon\leq2\tau RK'\sqrt{N}\int_{0}^{1/2}\sqrt{\log\left(\frac{1}{\omega}\right)}d\omega\triangleq \kappa\tau R\sqrt{N},
\]
where $\kappa'>0$ is a universal constant and where we used the change
of variables $\omega=\epsilon/\left(2\tau R\right)$.
\end{proof}

We complement Lemma \ref{lem:generic entropy bd} by the following one.
\begin{lem}
	\label{lem:entropy_est}Let $R\in\left(0,1\right)$. Then for any
	$x,\,y\in\mathcal{B}_{N-1}\left(0,R\right)$, 
	\begin{align}
		d_{\bar{f}_{\boldsymbol{\sigma}}^{\left(1\right)}}\left(x,y\right) & \leq a_{p}^{\left(1\right)}R\left\Vert x-y\right\Vert ,\label{eq:d1bd}\\
		d_{\bar{f}_{\boldsymbol{\sigma}}^{\left(2\right)}}\left(x,y\right) & \leq a_{p}^{\left(2\right)}R^{2}\left\Vert x-y\right\Vert ,\label{eq:d2bd}
	\end{align}
	for appropriate constants $a_{p}^{\left(1\right)},\,a_{p}^{\left(2\right)}>0$.
\end{lem}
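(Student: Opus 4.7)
The plan is to write both $\bar{f}_{\boldsymbol{\sigma}}^{(1)}(x)-\bar{f}_{\boldsymbol{\sigma}}^{(1)}(y)$ and $\bar{f}_{\boldsymbol{\sigma}}^{(2)}(x)-\bar{f}_{\boldsymbol{\sigma}}^{(2)}(y)$ as integral Taylor remainders, and then bound the variance of the integrand directly from the covariance $\mathbb{E}\{\bar{f}_{\boldsymbol{\sigma}}(x)\bar{f}_{\boldsymbol{\sigma}}(y)\}=W(x,y)^{p}$ using Lemma \ref{lem:cov_fbar}. By the rotational invariance of the joint law of $H_{N}$ and $H_{N}^{\prime}$, it suffices to treat $\boldsymbol{\sigma}=\mathbf{n}$; I drop that subscript and write $w=x-y$ throughout.

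For \eqref{eq:d1bd}, the fundamental theorem of calculus gives the first-order remainder
\[
\bar{f}^{(1)}(x)-\bar{f}^{(1)}(y)=\int_{0}^{1}\bigl\langle\nabla\bar{f}'(y+tw)-\nabla\bar{f}'(0),\,w\bigr\rangle\,dt,
\]
so Jensen's inequality reduces the task to showing $\sup_{\|v\|\leq R}\mathrm{Var}(\langle\nabla\bar{f}'(v)-\nabla\bar{f}'(0),w\rangle)\leq C_{p,R}\,R^{2}\|w\|^{2}$. The needed covariances are obtained by differentiating $W(z_{1},z_{2})^{p}$ at $(v,v)$ and $(v,0)$, using $W(v,v)=1$, $W(v,0)=\sqrt{1-\|v\|^{2}}$, $\partial_{z_{1}^{i}}\partial_{z_{2}^{j}}W|_{(0,0)}=\delta_{ij}$, and the chain rule; this yields the explicit formula
\[
\mathrm{Var}\bigl(\langle\nabla\bar{f}'(v)-\nabla\bar{f}'(0),w\rangle\bigr)=2p\bigl[1-(1-\|v\|^{2})^{(p-1)/2}\bigr]\|w\|^{2}+\frac{p\langle v,w\rangle^{2}}{1-\|v\|^{2}}+2p(p-1)(1-\|v\|^{2})^{(p-3)/2}\langle v,w\rangle^{2}.
\]
Each of the three terms is $O(R^{2}\|w\|^{2})$ uniformly for $\|v\|\leq R$ by the elementary inequalities $1-(1-\|v\|^{2})^{\alpha}\leq\alpha\|v\|^{2}$ and $\langle v,w\rangle^{2}\leq R^{2}\|w\|^{2}$, with a constant depending only on $p$ (and, mildly, on $R$ through $(1-R^{2})^{-1}$).

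For \eqref{eq:d2bd}, plugging the identity $\nabla\bar{f}(z)-\nabla\bar{f}(0)=\int_{0}^{1}\nabla^{2}\bar{f}(rz)z\,dr$ into the first-order Taylor expansion of $\bar{f}(x)-\bar{f}(y)-\langle\nabla\bar{f}(0),w\rangle$, and checking that the contribution of $\nabla^{2}\bar{f}(0)$ exactly cancels the quadratic piece of $\bar{f}_{quad}(x)-\bar{f}_{quad}(y)=\langle\nabla\bar{f}(0),w\rangle+\tfrac{1}{2}(x+y)^{T}\nabla^{2}\bar{f}(0)w$, yields
\[
\bar{f}^{(2)}(x)-\bar{f}^{(2)}(y)=\int_{0}^{1}\!\!\int_{0}^{1}w^{T}\bigl[\nabla^{2}\bar{f}(ru)-\nabla^{2}\bar{f}(0)\bigr]u\,dr\,dt,\qquad u:=y+tw.
\]
Writing $\partial_{ij}\bar{f}(ru)-\partial_{ij}\bar{f}(0)=r\sum_{k}u^{k}\int_{0}^{1}\partial_{ijk}\bar{f}(qru)\,dq$ recasts the integrand as $r\int_{0}^{1}G(qru)\,dq$ for the \emph{scalar} directional third derivative $G(a):=\sum_{i,j,k}w^{i}u^{j}u^{k}\partial_{ijk}\bar{f}(a)=[(w\cdot\nabla)(u\cdot\nabla)^{2}\bar{f}](a)$. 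Its variance is the sixth mixed derivative $[(w\cdot\nabla_{a_{1}})(u\cdot\nabla_{a_{1}})^{2}(w\cdot\nabla_{a_{2}})(u\cdot\nabla_{a_{2}})^{2}]W(a_{1},a_{2})^{p}$ restricted to the diagonal $a_{1}=a_{2}=a$; being bi-homogeneous of degree $2$ in $w$ and $4$ in $u$ with coefficients that are smooth functions of $a$ on $\{\|a\|<1\}$, it is bounded by $C_{p,R}\|w\|^{2}\|u\|^{4}$ uniformly for $\|a\|\leq R$. Since $\|u\|\leq R$, Cauchy--Schwarz over the triple integral yields $d_{\bar{f}^{(2)}}(x,y)^{2}\leq C_{p,R}R^{4}\|w\|^{2}$, which is \eqref{eq:d2bd}.

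The main technical obstacle is keeping the constants $a_{p}^{(1)},a_{p}^{(2)}$ free of the ambient dimension $N-1$. A naive estimate proceeding via the operator norm of the covariance matrix of $\nabla\bar{f}'(v)-\nabla\bar{f}'(0)$, or a coordinate-by-coordinate sum of variances of the Hessian components $\partial_{ij}\bar{f}(v)-\partial_{ij}\bar{f}(0)$, would introduce a spurious factor of $N-1$ or $(N-1)^{2}$. The route above sidesteps this by contracting against the fixed vectors $w,u$ \emph{before} passing to variances, so that every quantity one must estimate is the variance of a single Gaussian scalar, determined by one scalar mixed derivative of $W(\cdot,\cdot)^{p}$ that depends on $w,u,a$ only through inner products. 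The Taylor-remainder identities and Jensen/Cauchy--Schwarz steps are otherwise routine.
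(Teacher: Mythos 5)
Your proposal is correct, but it reaches the Lipschitz bounds by a genuinely different (though closely related) mechanism than the paper. The paper works directly with the squared canonical metric $\varphi_{(k)}^{2}(x,x+tv)=d_{\bar f_{\boldsymbol\sigma}^{(k)}}^{2}(x,x+tv)$: it computes this quantity in closed form from Lemma \ref{lem:cov_fbar}, Taylor-expands it in $t$, bounds the $t^{2}$ coefficient by $C_{k}R^{2k}$, and then integrates the resulting one-sided derivative bound of $\varphi_{(k)}$ along the segment from $x$ to $y$. You instead put the Taylor-remainder integral \emph{inside} the expectation, representing $\bar f^{(k)}(x)-\bar f^{(k)}(y)$ as an iterated integral of directional first (resp.\ third) derivatives of the field, and then use Jensen/Cauchy--Schwarz to reduce everything to pointwise variances of scalar contractions. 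For \eqref{eq:d1bd} the two routes produce literally the same quantity: your variance formula for $\langle\nabla\bar f'(v)-\nabla\bar f'(0),w\rangle$ is exactly the $t^{2}$ coefficient in the paper's \eqref{eq:75}. The real divergence is in \eqref{eq:d2bd}: the paper carries out the explicit algebra for $\varphi_{(2)}^{2}-\varphi_{(1)}^{2}$ (its \eqref{eq:77}), while you replace this by a structural observation --- the variance of the contracted third derivative is a sixth mixed derivative of $W(\cdot,\cdot)^{p}$ on the diagonal, a polynomial in the inner products $\langle w,u\rangle,\langle w,a\rangle,\langle u,a\rangle,\|w\|^{2},\|u\|^{2},\|a\|^{2}$ that is bi-homogeneous of degrees $2$ and $4$ in $w$ and $u$. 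This is less explicit but makes the dimension-independence of the constants transparent, which is precisely the point you correctly flag as the only real danger. One small remark: your constants pick up a factor $(1-R^{2})^{-1}$ (and similar), so as written they are not uniform over all $R\in(0,1)$; the paper has the same issue and resolves it by first proving the bound for $R<1/2$ and asserting the general case follows by enlarging constants. Since the lemma is only ever invoked with $R=N^{-\alpha}$, this is harmless, but to match the statement verbatim you should either restrict to $R<1/2$ as the paper does or note explicitly how the constant is made independent of $R$.
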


\begin{proof}

For $k=1$, $2$, denote 
\[
\varphi_{\left(k\right)}\left(x,y\right)=d_{\bar{f}_{\boldsymbol{\sigma}}^{\left(k\right)}}\left(x,y\right).
\]
Suppose that the one-sided derivative 
\begin{equation}
\left.\frac{d}{dt}\right|_{t=0^{+}}\varphi_{\left(k\right)}\left(x,x+tv\right)\triangleq\lim_{t\to0^{+}}\frac{\varphi_{\left(k\right)}\left(x,x+tv\right)-\varphi_{\left(k\right)}\left(x,x\right)}{t},\label{eq:72}
\end{equation}
exists and its absolute value is bounded by $m>0$ uniformly in $v\in\mathbb{R}^{N-1}$
with $\left\Vert v\right\Vert =1$ and $x\in\mathcal{B}_{N-1}\left(0,R\right)$.
Then, for any $x_{1},\,x_{2}\in\mathcal{B}_{N-1}\left(0,R\right)$,
setting $v_{12}=\frac{x_{2}-x_{1}}{\left\Vert x_{2}-x_{1}\right\Vert }$
and $x\left(s\right)=x_{1}+sv_{12}$, we have that
\[
d_{\bar{f}_{\boldsymbol{\sigma}}^{\left(k\right)}}\left(x_{1},x_{2}\right)\leq\int_{0}^{\left\Vert x_{2}-x_{1}\right\Vert }\left|\left.\frac{d}{dt}\right|_{t=0^{+}}\varphi_{\left(k\right)}\left(x\left(s\right),x\left(s\right)+tv\right)\right|ds\leq m\left\Vert x_{2}-x_{1}\right\Vert .
\]
Hence, the lemma follows if we prove the existence of (\ref{eq:72})
and an appropriate upper bound on its absolute value (for $k=1,2$).

From the expressions that we derive below for $\varphi_{\left(k\right)}^{2}\left(x,x+tv\right)$
(see (\ref{eq:75}) and (\ref{eq:77})) it follows that the latter possesses
derivatives up to second order. We will also show that 
\begin{equation}
\varphi_{\left(k\right)}^{2}\left(x,x\right)=\left.\frac{d}{dt}\right|_{t=0}\varphi_{\left(k\right)}^{2}\left(x,x+tv\right)=0\,\,\,\,\,\mbox{and}\,\,\,\,\,\left.\frac{d^{2}}{dt^{2}}\right|_{t=0}\varphi_{\left(k\right)}^{2}\left(x,x+tv\right)\geq0.\label{eq:73}
\end{equation}
From this, by a Taylor expansion of $\varphi_{\left(k\right)}^{2}\left(x,x+tv\right)$,
we have that 
\begin{align}
	\left.\frac{d}{dt}\right|_{t=0^{+}}\varphi_{\left(k\right)}\left(x,x+tv\right) & =\lim_{t\to0^{+}}\frac{\sqrt{\varphi_{\left(k\right)}^{2}\left(x,x+tv\right)}-0}{t}\nonumber \\
	& =\lim_{t\to0^{+}}\frac{\sqrt{\frac{1}{2}t^{2}\cdot\left.\frac{d^{2}}{ds^{2}}\right|_{s=0}\varphi_{\left(k\right)}^{2}\left(x,x+sv\right)+o\left(t^{2}\right)}}{t}\nonumber \\
	& =\sqrt{\frac{1}{2}\left.\frac{d^{2}}{ds^{2}}\right|_{s=0}\varphi_{\left(k\right)}^{2}\left(x,x+sv\right)}.\label{eq:74}
\end{align}
Therefore, the lemma follows if we show, in addition to (\ref{eq:73}),
that 
\begin{equation}
\varphi_{\left(k\right)}^{2}\left(x,x+tv\right)\leq C_{k}R^{2k}t^{2}+o\left(t^{2}\right),\mbox{\,\,\,\ as }t\to0,\label{eq:78}
\end{equation}
for any $x\in\mathcal{B}_{N-1}\left(0,R\right)$ with and appropriate
constant $C_{k}>0$ independent of $R$, $x$ and $N$. Note that
if the lemma holds for $R<1/2$, then it is true in general, where
if needed the constants $a_{p}^{\left(k\right)}$ are increased. Therefore,
it is in fact enough to prove the uniform bound (\ref{eq:78}) assuming
$R<1/2$, and this is what we shall do.

Using Lemma \ref{lem:cov_fbar} we obtain, for $v\in\mathbb{R}^{N-1}$
with $\left\Vert v\right\Vert =1$,
\begin{align}
	\varphi_{\left(1\right)}^{2}\left(x,x+tv\right) & =\mathbb{E}\left\{ \left(\bar{f}_{\boldsymbol{\sigma}}^{\left(1\right)}\left(x\right)\right)^{2}\right\} +\mathbb{E}\left\{ \left(\bar{f}_{\boldsymbol{\sigma}}^{\left(1\right)}\left(x+tv\right)\right)^{2}\right\} -2\mathbb{E}\left\{ \bar{f}_{\boldsymbol{\sigma}}^{\left(1\right)}\left(x\right)\bar{f}_{\boldsymbol{\sigma}}^{\left(1\right)}\left(x+tv\right)\right\} \label{eq:76}\\
	& =2-2\left(W\left(x,x+tv\right)\right)^{p}+pt^{2}\nonumber \\
	& +2p\left(1-\left\Vert x\right\Vert ^{2}\right)^{\frac{p-1}{2}}\left\langle x,tv\right\rangle \nonumber \\
	& -2p\left(1-\left\Vert x+tv\right\Vert ^{2}\right)^{\frac{p-1}{2}}\left\langle x+tv,tv\right\rangle ,\nonumber 
\end{align}
where $W\left(x,y\right)$ is defined in (\ref{eq:W}). 

Note that 
\[
W\left(x,x+tv\right)=1-\frac{1}{2}\left(\frac{\left\langle x,v\right\rangle ^{2}}{1-\left\Vert x\right\Vert ^{2}}+\left\Vert v\right\Vert ^{2}\right)t^{2}+o\left(t^{2}\right).
\]
Also, any of the summands in (\ref{eq:76}) can be written as a quadratic
function in $t$ plus a remainder term of $o\left(t^{2}\right)$.
Doing so and combining like terms we arrive at 
\begin{align}
	\varphi_{\left(1\right)}^{2}\left(x,x+tv\right) & =p\left(\frac{\left\langle x,v\right\rangle ^{2}}{1-\left\Vert x\right\Vert ^{2}}+2-2\left(1-\left\Vert x\right\Vert ^{2}\right)^{\frac{p-1}{2}}+2\left(p-1\right)\left(1-\left\Vert x\right\Vert ^{2}\right)^{\frac{p-3}{2}}\left\langle x,v\right\rangle ^{2}\right)t^{2}+o\left(t^{2}\right)\nonumber \\
	& \leq p\left(\frac{\left\Vert x\right\Vert ^{2}}{1-\left\Vert x\right\Vert ^{2}}+2-2\left(1-\left\Vert x\right\Vert ^{2}\right)^{\frac{p-1}{2}}+2\left(p-1\right)\left(1-\left\Vert x\right\Vert ^{2}\right)^{\frac{p-3}{2}}\left\Vert x\right\Vert ^{2}\right)t^{2}+o\left(t^{2}\right).\label{eq:75}
\end{align}
Under the assumption that $R<1/2$, 
\[
\varphi_{\left(1\right)}^{2}\left(x,x+tv\right)\leq C_{1}R^{2}t^{2}+o\left(t^{2}\right),
\]
for an appropriate constant $C_{1}>0$ independent of $x\in\mathcal{B}_{N-1}\left(0,R\right)$
and $N$. Combined with (\ref{eq:74}), this proves that (\ref{eq:73})
and (\ref{eq:d1bd}) hold.

The proof for the case $k=2$ is similar. Lemma \ref{lem:cov_fbar}
and some algebra give, for $v\in\mathbb{R}^{N-1}$ with $\left\Vert v\right\Vert =1$,
\begin{align*}
	& \negthickspace\negthickspace\negthickspace\negthickspace\varphi_{\left(2\right)}^{2}\left(x,x+tv\right)-\varphi_{\left(1\right)}^{2}\left(x,x+tv\right)\\
	& =\frac{1}{2}p\left(p-1\right)\left(-2\left\langle x,v\right\rangle ^{2}t^{2}+\left(2t\left\langle x,v\right\rangle +t^{2}\right)^{2}+2\left\Vert x\right\Vert ^{2}t^{2}\right)+\frac{1}{4}p^{2}\left(2t\left\langle x,v\right\rangle +t^{2}\right)^{2}\\
	& -p\left(\left(1-\left\Vert x\right\Vert ^{2}\right)^{p/2}-\left(1-\left\Vert x\right\Vert ^{2}-2t\left\langle x,v\right\rangle -t^{2}\right)^{p/2}\right)\left(2t\left\langle x,v\right\rangle +t^{2}\right)\\
	& +p\left(p-1\right)\left(1-\left\Vert x\right\Vert ^{2}\right)^{p/2-1}\left(2t\left\Vert x\right\Vert ^{2}\left\langle x,v\right\rangle +\left\langle x,v\right\rangle ^{2}t^{2}\right)\\
	& -p\left(p-1\right)\left(1-\left\Vert x\right\Vert ^{2}-2t\left\langle x,v\right\rangle -t^{2}\right)^{p/2-1}\left(2\left\Vert x\right\Vert ^{2}+3t\left\langle x,v\right\rangle +t^{2}\right)\left(t\left\langle x,v\right\rangle +t^{2}\right).
\end{align*}
Expanding in $t$ and combining like terms yields
\begin{align}
	& \negthickspace\negthickspace\negthickspace\negthickspace\varphi_{\left(2\right)}^{2}\left(x,x+tv\right)-\varphi_{\left(1\right)}^{2}\left(x,x+tv\right)=\left[p\left(-2p+1\right)\left\langle x,v\right\rangle ^{2}+p\left(p-1\right)\left\Vert x\right\Vert ^{2}\right.\nonumber \\
	& \left.+p\left(4p-3\right)\left(p-2\right)\left\langle x,v\right\rangle ^{2}\left\Vert x\right\Vert ^{2}+p\left(p-1\right)\left(p-2\right)\left\Vert x\right\Vert ^{4}+o\left(\left\Vert x\right\Vert ^{4}\right)\right]t^{2}+o\left(t^{2}\right).\label{eq:77}
\end{align}
Combined with (\ref{eq:75}) this gives, for $R<1/2$,
\[
\varphi_{\left(2\right)}^{2}\left(x,x+tv\right)\leq C_{2}R^{4}t^{2}+o\left(t^{2}\right),
\]
with an appropriate $C_{2}>0$, for all $x\in\mathcal{B}_{N-1}\left(0,R\right)$.
The proof is thus completed.
\end{proof}

\subsection{\label{sub:lemmas}Proof of Lemmas \ref{lem:g1B1}, \ref{lem:g4B4},
and \ref{lem:giBi}}

In this section we prove the three lemmas in the title. The proofs
will rely on Corollary \ref{cor:1}, which describes the covariance
structure of the (normalized) Hamiltonian, its gradient and Hessian at a point,
and on the metric entropy bounds of Lemma \ref{lem:entropy_est} and the related Lemma \ref{lem:generic entropy bd}. Everywhere in this section we will assume that the choice of the orthonormal frame field $(E_i(\boldsymbol{\sigma}))_{i=1}^{N-1}$ is such that \eqref{eq:E} holds. Note that any of the cases $i=1,...,8$,
corresponding to $g_{i}$ and $B_{i}$, is covered by exactly one
of the lemmas. We therefore give the proofs according to these cases,
while stating in the titles of the proofs the corresponding lemma.

\subsection*{Proof of the case $i=1$ (Lemma \ref{lem:g1B1})}

 Recall that 
 \[
 g_{1}\left(\boldsymbol{\sigma}\right)=\nabla^{2}\bar{f}_{\boldsymbol{\sigma}}\left(0\right),\qquad B_{1}=\sqrt{N}\mathcal{V}\left(\delta\right),
 \]
  where $\mathcal{V}\left(\delta\right)$ is the
  set of real, symmetric matrices with eigenvalues in the interval
  \[
  \left(pE_{0}-\left(2\sqrt{p\left(p-1\right)}+\delta\right),\,pE_{0}+2\sqrt{p\left(p-1\right)}+\delta\right).
  \]
Using Lemma \ref{lem:K-R} and the fact that by Corollary \ref{cor:1}, $f(\mathbf n)$ and $\nabla f(\mathbf n)$ are independent, we have that, denoting $D_{N}=\frac{1}{\sqrt{N}}\left(m_{N}-L,\,m_{N}+L\right)$,
\begin{align}
 & \mathbb{E}\left\{ \#\left\{ \sqrt{N}\boldsymbol{\sigma}\in\mathscr{C}_{N}\left(L\right):\,g_{1}\left(\boldsymbol{\sigma}\right)\notin B_{1}\right\} \right\} \leq\omega_{N}\left(\left(N-1\right)p\left(p-1\right)\right)^{\frac{N-1}{2}}\varphi_{\nabla f\left(\mathbf{n}\right)}\left(0\right)\nonumber \\
 & \quad\times\int_{D_{N}}du\frac{1}{\sqrt{2\pi}}e^{-\frac{u^{2}}{2}}\mathbb{E}\left\{ \left|\det\left(\frac{\nabla^{2}f\left(\mathbf{n}\right)}{\sqrt{\left(N-1\right)p\left(p-1\right)}}\right)\right|\mathbf{1}\Big\{ g_{1}\left(\mathbf{n}\right)\notin B_{1}\Big\}\,\Bigg|\,\nabla f\left(\mathbf{n}\right)=0,\,f\left(\mathbf{n}\right)=u\right\} .\label{eq:54}
\end{align}
From Corollary \ref{cor:1}, under the conditioning, the normalized
Hessian matrix in (\ref{eq:54}) is a GOE matrix plus $-\frac{\gamma_{p}u}{\sqrt{N-1}}\mathbf{I}$ (where we recall that $\gamma_{p}=\sqrt{p/\left(p-1\right)}$).
Since $E_{0}>E_{\infty}=2/\gamma_{p}$, for a small enough constant
$a>0$, for any $u\in D_{N}$, $-\frac{\gamma_{p}u}{\sqrt{N-1}}>2+a$.
Thus, Lemma \ref{lem:19} and \eqref{eq:E}, by which $g_1(\mathbf n)=\nabla^{2}f(\mathbf{n})$, imply that the conditional expectation
in (\ref{eq:54}) is bounded by 
\[
e^{-cN}\mathbb{E}\left\{ \left|\det\left(\frac{\nabla^{2}f\left(\mathbf{n}\right)}{\sqrt{\left(N-1\right)p\left(p-1\right)}}\right)\right|\,\Bigg|\,\nabla f\left(\mathbf{n}\right)=0,\,f\left(\mathbf{n}\right)=u\right\} ,
\]
for an appropriate constant $c>0$, for all $N>N_{0}\left(L\right)$
(independent of $u$), and uniformly in $u\in D_{N}$. Hence, the left-hand side  of \eqref{eq:54} is less than $e^{-cN}$ times its right-hand side with the indicator removed. The latter is equal to $\mathbb{E}\left\{ \#\left\{ \boldsymbol{\sigma}\in\mathscr{C}_{N}\left(L\right)\right\} \right\}$, as follows from the case with equality in Lemma \ref{lem:K-R}.
Since $\mathbb{E}\left\{ \#\left\{ \boldsymbol{\sigma}\in\mathscr{C}_{N}\left(L\right)\right\} \right\} $
converges, by Proposition \ref{prop:intensity}, to a finite number
as $N\to\infty$, the proof is completed.\qed

\subsection*{Proof of the case $i=2$ (Lemma \ref{lem:giBi})}

Recall that
\[
g_{2}\left(\boldsymbol{\sigma}\right)=Y_{\boldsymbol{\sigma}}:=-\frac{1}{\sqrt{N}}\left(\nabla^{2}\bar{f}_{\boldsymbol{\sigma}}\left(0\right)\right)^{-1}\nabla\bar{f}_{\boldsymbol{\sigma}}^{\prime}\left(0\right),\qquad B_{2}=\mathcal{B}_{N-1}\left(0,N^{-\alpha}\right).
\]

The minimal eigenvalue of any $\mathbf{A}_{N-1}\in B_1=\sqrt{N}\mathcal{V}\left(\delta\right)$ is larger than $\sqrt NK_{p,\delta}>0$ (see \eqref{eq:Kp} and recall that $\delta$ was chosen in the beginning of Section \ref{sec:pfProp5} such that $K_{p,\delta}>0$). Thus, for any such $\mathbf{A}_{N-1}$ and $u\in\mathbb R$, conditional
on 
\begin{equation}
\nabla^{2}f\left(\mathbf{n}\right)=\mathbf{A}_{N-1},\,\nabla f\left(\mathbf{n}\right)=0,\,f\left(\mathbf{n}\right)=u,\label{eq:41}
\end{equation}
using \eqref{eq:E} we have the following stochastic domination
\[
\left\Vert Y_{\mathbf{n}}\right\Vert \leq\frac{1}{NK_{p,\delta}}\left\Vert \nabla\bar{f}_{\mathbf{n}}^{\prime}\left(0\right)\right\Vert .
\]

Since, by Corollary \ref{cor:1} and \eqref{eq:E},  $\nabla\bar{f}_{\mathbf{n}}^{\prime}\left(0\right)\sim N\left(0,pI\right)$,
\begin{align*}
&\mathbb{P}\left\{ \left.g_{2}\left(\mathbf{n}\right)\notin B_{2}\,\right|\,\nabla^{2}f(\mathbf n)=\mathbf{A}_{N-1},\,\nabla f(\mathbf n)=0,\,f(\mathbf n)=u\right\} \\
&\quad \leq \mathbb{P}\left\{ Q_{N-1}\geq N^{1-\alpha}\frac{K_{p,\delta}}{\sqrt{p}}\right\} ,
\end{align*}
where $Q_{N-1}$ has standard Chi distribution with $N-1$ degrees
of freedom. This completes the proof since we assumed (see the beginning of Section \ref{sec:pfProp5}) that $\alpha\in (1/3,1/2)$ and since $Q_{N-1}/\sqrt N$ converges in probability to $1$. \qed

\subsection*{Proof of the case $i=3$ (Lemma \ref{lem:giBi})}

Recall that 
\begin{align*}
	g_{3}\left(\boldsymbol{\sigma}\right)  =\Delta_{\boldsymbol{\sigma}}+\frac{p}{2\sqrt{N}}{\rm Tr}\left(\left(\nabla^{2}\bar{f}_{\boldsymbol{\sigma}}\left(0\right)\right)^{-1}\right),  \quad B_{3}=\left(-N^{-\frac{1}{2}+\epsilon},N^{-\frac{1}{2}+\epsilon}\right),
\end{align*}
where
\[
	\Delta_{\boldsymbol{\sigma}}  =\frac{\sqrt{N}}{2}Y_{\boldsymbol{\sigma}}^{T}\nabla^{2}\bar{f}_{\boldsymbol{\sigma}}\left(0\right)Y_{\boldsymbol{\sigma}}+\left\langle \nabla\bar{f}_{\boldsymbol{\sigma}}^{\prime}\left(0\right),Y_{\boldsymbol{\sigma}}\right\rangle.
\]
By Corollary \ref{cor:1} and \eqref{eq:E}, $\nabla\bar{f}_{\mathbf{n}}^{\prime}\left(0\right)\sim N\left(0,pI\right)$.
By definition $\nabla\bar{f}_{\mathbf{n}}^{\prime}\left(0\right)$
and $f\left(\mathbf{n}\right)$ are independent. Therefore,
conditional on $\eqref{eq:41}$, $\nabla\bar{f}_{\mathbf{n}}^{\prime}\left(0\right)$
has the same law as without the conditioning. Another way to write this law, and this is what we will use, is that conditional on $\eqref{eq:41}$, $\nabla\bar{f}_{\mathbf{n}}^{\prime}\left(0\right)$ has the same distribution as 
\[
\sqrt{p}\sum_{i=1}^{N-1}W_{i}a_{i},
\]
with $W_{i}\sim N\left(0,1\right)$ i.i.d and $a_{1},...,a_{N-1}$ being
an orthonormal basis composed of the eigenvectors of $\mathbf{A}_{N-1}$.
Thus, under the conditioning and for any $u\in \mathbb R$ and $\mathbf{A}_{N-1}\in B_1$, denoting the eigenvalues of $\mathbf{A}_{N-1}$ by $\lambda_{i}\left(\mathbf{A}_{N-1}\right)$,
$g_{3}\left(\mathbf{n}\right)$ has the same law as
\[
\frac{p}{2\sqrt{N}}\sum_{i=1}^{N-1}\left(1-W_{i}^{2}\right)/\lambda_{i}\left(\mathbf{A}_{N-1}\right).
\]
In particular, since the minimal eigenvalue of any $\mathbf{A}_{N-1}\in B_1=\sqrt{N}\mathcal{V}\left(\delta\right)$ is larger than $\sqrt NK_{p,\delta}$ (see \eqref{eq:Kp}), under the conditioning, 
$g_{3}\left(\mathbf{n}\right)$ is a centered variable with second moment bounded by 
\[
\left(\frac{p}{2NK_{p,\delta}}\right)^{2}\sum_{i=1}^{N-1}\mathbb{E}\left\{ \left(1-W_{i}^{2}\right)^{2}\right\} =\left(\frac{p}{2NK_{p,\delta}}\right)^{2}\cdot2\left(N-1\right).
\]
The lemma follows from
this.\qed

\subsection*{Proof of the case $i=4$ (Lemma \ref{lem:g4B4})}

Recall that
\begin{align*}
	g_{4}\left(\boldsymbol{\sigma}\right)  =\frac{p}{2\sqrt{N}}{\rm Tr}\left(\left(\nabla^{2}\bar{f}_{\boldsymbol{\sigma}}\left(0\right)\right)^{-1}\right), \quad B_{4}=\left(C_{0}-\tau_{\epsilon,\delta}\left(N\right),C_{0}+\tau_{\epsilon,\delta}\left(N\right)\right),
\end{align*}
where $C_{0}$ is defined in (\ref{eq:C0}) and $\tau_{\epsilon,\delta}\left(N\right)$
is a sequence of numbers such that $\lim_{N\to\infty}\tau_{\epsilon,\delta}\left(N\right)=0$ which is assumed to be large enough whenever needed.

By Lemma \ref{lem:K-R} we have, denoting $D_{N}=\frac{1}{\sqrt{N}}\left(m_{N}-L,\,m_{N}+L\right)$,
\begin{align}
 & \mathbb{E}\left\{ \#\left\{ \sqrt{N}\boldsymbol{\sigma}\in\mathscr{C}_{N}\left(L\right):\,g_{4}\left(\boldsymbol{\sigma}\right)\notin B_{4},g_{1}\left(\boldsymbol{\sigma}\right)\in B_{1}\right\} \right\} \leq\omega_{N}\left(\left(N-1\right)p\left(p-1\right)\right)^{\frac{N-1}{2}}\varphi_{\nabla f\left(\mathbf{n}\right)}\left(0\right)\nonumber \\
 & \quad\times\int_{D_{N}}\frac{1}{\sqrt{2\pi}}e^{-\frac{u^{2}}{2}}\mathbb{E}\left\{ \left|\det\left(\frac{\nabla^{2}f\left(\mathbf{n}\right)}{\sqrt{\left(N-1\right)p\left(p-1\right)}}\right)\right|\mathbf{1}\Big\{ g_{4}\left(\mathbf{n}\right)\notin B_{4},g_{1}\left(\mathbf{n}\right)\in B_{1}\Big\}\,\Bigg|\,f\left(\mathbf{n}\right)=u\right\} du,\nonumber
\end{align}
where we used the fact that $\nabla f\left(\mathbf{n}\right)$ is
independent of the expression in
the expectation (as follows from Corollary \ref{cor:1} and \eqref{eq:E}). Note that, for large enough $K>0$, the term preceding the integral above is bounded by $e^{KN}$ and so do $e^{-u^2/2}$ and the determinant above, for $u\in D_N$ and $\nabla^{2}f\left(\mathbf{n}\right)=g_1(\mathbf n)\in B_1$ (see \eqref{eq:E}).

Hence, in order to finish the proof it will be enough to show that, uniformly in $u\in D_N$,

\begin{equation}
\lim_{N\to\infty}\frac{1}{N}\log\left(\mathbb{P}\left\{ g_{4}\left(\mathbf{n}\right)\notin B_{4},g_{1}\left(\mathbf{n}\right)\in B_{1}\,\big|\,f\left(\mathbf{n}\right)=u\right\} \right)=-\infty.\label{eq:56}
\end{equation}

For any general real, symmetric matrix $\mathbf{A}$ let $\lambda_{i}\left(\mathbf{A}\right)$
denote the eigenvalues of $\mathbf{A}$. Let $\mathbf{M}$ be an $N-1$
dimensional GOE matrix and set $\tilde{\mathbf{M}}_{u}\triangleq\sqrt{\left(N-1\right)p\left(p-1\right)}\mathbf{M}-pu\mathbf{I}$.
From Corollary \ref{cor:1} and \eqref{eq:E}, as $N\to\infty$, 
\begin{align}
 & \mathbb{P}\left\{ \left.g_{4}\left(\mathbf{n}\right)\notin B_{4}\,,g_{1}\left(\mathbf{n}\right)\in B_{1}\,\right|\,f\left(\mathbf{n}\right)=u\right\} \nonumber \\
 & =\mathbb{P}\left\{ \left|\frac{p}{2\sqrt{N}}\sum_{i=1}^{N-1}\frac{1}{\lambda_{i}\left(\tilde{\mathbf{M}}_{u}\right)}-C_{0}\right|>\tau_{\epsilon,\delta}\left(N\right),\,\tilde{\mathbf{M}}_{u}\in\sqrt{N}\mathcal{V}\left(\delta\right)\right\} \nonumber \\
 & =\mathbb{P}\left\{ \left|\left(1+o\left(1\right)\right)\frac{1}{2}\gamma_{p}\cdot\frac{1}{N}\sum_{i=1}^{N-1}\frac{1}{\lambda_{i}\left(\mathbf{M}\right)+\gamma_{p}E_{0}\left(1+o\left(1\right)\right)}-C_{0}\right|>\tau_{\epsilon,\delta}\left(N\right),\,\mathbf{M}\in\mathcal{V}'\left(\delta\right)\right\} ,\label{eq:a2}
\end{align}
where $\gamma_{p}=\sqrt{p/(p-1)}$ and we define $\mathcal{V}'\left(\delta\right)=\mathcal{V}'\left(\delta,N\right)$
as the set of real, symmetric matrices with all eigenvalues in 
\[
\left(-\left(2+\frac{\delta}{\sqrt{p\left(p-1\right)}}+o\left(1\right)\right),2+\frac{\delta}{\sqrt{p\left(p-1\right)}}+o\left(1\right)\right),
\]
and all the $o\left(1\right)$ terms above are uniform in
$u\in D_{N}$.

The restriction $\delta\in\left(0,p\left(E_{0}-E_{\infty}\right)\right)$ (which was made in the beginning of Section \ref{sec:pfProp5}),
is such that there exists a constant $c_{\delta}>0$
such that, for large enough $N$, the event $\mathbf{M}\in\mathcal{V}'\left(\delta\right)$
is contained in the event $F_{N}=\left\{ \forall i:\,\lambda_{i}\left(\mathbf{M}+\gamma_{p}E_{0}\mathbf{I}\right)\geq c_{\delta}\right\} $.
Thus, if we are able to show that
\begin{equation}
\lim_{N\to\infty}\frac{1}{N}\log\left(\mathbb{P}\left\{ \left|\frac{1}{2}\gamma_{p}\cdot\frac{1}{N}\sum_{i=1}^{N-1}\frac{1}{\lambda_{i}\left(\mathbf{M}\right)+\gamma_{p}E_{0}}-C_{0}\right|>\tau_{\epsilon,\delta}\left(N\right),\,F_{N}\right\} \right)=-\infty,\label{eq:9}
\end{equation}
with some sequence $\tau_{\epsilon,\delta}\left(N\right)$ which converges
to $0$ as $N\to\infty$, the lemma follows (where to account for
the $o\left(1\right)$ terms we maybe need to increase $\tau_{\epsilon,\delta}\left(N\right)$
compared to (\ref{eq:a2})). Define $\bar{h}\left(x\right)=1/\left(x+\gamma_{p}E_{0}\right)$. Let $\rho\in(0,c_{\delta})$ and assume  $h\left(x\right)$ is some bounded, Lipschitz
continuous function such that for any $x\geq c_{\delta}-\gamma_{p}E_{0}-\rho$, it holds that $h\left(x\right)=\bar{h}\left(x\right)$. Then on the event $F_{N}$,
\[
\frac{1}{N}\sum_{i=1}^{N-1}\frac{1}{\lambda_{i}\left(\mathbf{M}\right)+\gamma_{p}E_{0}}=\left\langle L_{\mathbf{M}},h\right\rangle ,
\]
where $L_{\mathbf{M}}=\frac{1}{N}\sum_{i=1}^{N-1}\delta_{\lambda_{i}\left(\mathbf{M}\right)}$
is the empirical measure of eigenvalues of $\mathbf{M}$.

Hence, the probability in (\ref{eq:9}) is at most
\[
\mathbb{P}\left\{ \left|\frac{1}{2}\gamma_{p}\left\langle L_{\mathbf{M}},h\right\rangle -C_{0}\right|\geq\tau_{\epsilon,\delta}\left(N\right)\right\} .
\]
Wigner's law \cite{wigner} yields  
\[
\lim_{N\to\infty}\mathbb{E}\left\{ \frac{1}{2}\gamma_{p}\left\langle L_{\mathbf{M}},h\right\rangle \right\} =C_{0},
\]
where we recall that $C_{0} =\frac{1}{2}\gamma_{p}\left\langle \mu^*,h\right\rangle$   with $\mu^*$ denoting the semicircle law. Therefore, from standard concentration inequalities as in \cite[Theorem 2.3.5]{Matrices},
(\ref{eq:9}) follows, and the proof is completed, if we choose
$\tau_{\epsilon,\delta}\left(N\right)\to0$ such that 
\[
\lim_{N\to\infty}N\left(\tau_{\epsilon,\delta}\left(N\right)-\left|\mathbb{E}\left\{ \frac{1}{2}\gamma_{p}\left\langle L_{\mathbf{M}},h\right\rangle \right\} -C_{0}\right|\right)^{2}=\infty.
\]
\qed

\subsection*{Proof of the case $i=5$ (Lemma \ref{lem:giBi})}

Recall that
\begin{align*}
	g_{5}\left(\boldsymbol{\sigma}\right)  =\sup_{x\in\mathcal{B}_{N-1}\left(0,N^{-\alpha}\right)}\left|\bar{f}_{\boldsymbol{\sigma}}^{\left(1\right)}\left(x\right)\right|,  \quad B_{5}=\left(-CN^{\frac{1}{2}-2\alpha},CN^{\frac{1}{2}-2\alpha}\right),
\end{align*}
and
\begin{equation*}
\bar{f}_{\boldsymbol{\sigma}}^{\left(1\right)}\left(x\right)=\bar{f}_{\boldsymbol{\sigma}}^{\prime}\left(x\right)-\bar{f}_{\boldsymbol{\sigma},lin}^{\prime}\left(x\right)=\bar{f}_{\boldsymbol{\sigma}}^{\prime}\left(x\right)-\bar{f}_{\boldsymbol{\sigma}}^{\prime}\left(0\right)-\left\langle \nabla\bar{f}_{\boldsymbol{\sigma}}^{\prime}\left(0\right),x\right\rangle,
\end{equation*}
where $C>0$ is a constant which can be assumed to be large enough
whenever needed.

The random variable $g_{5}\left(\mathbf{n}\right)$ is measurable with respect to the process $\{\bar{f}_{\mathbf{n}}^{\prime}\left(x\right)\}_x$ and is therefore
independent of all the variables in the conditioning of (\ref{eq:6-1}).
From Lemmas \ref{lem:generic entropy bd} and \ref{lem:entropy_est}
with $R=N^{-\alpha}$, $\tau=a_p^{(1)}R$, and $\kappa$ being the universal constant of Lemma \ref{lem:generic entropy bd},
\[
\mathbb{E}\left\{ \sup_{x\in\mathcal{B}_{N-1}\left(0,N^{-\alpha}\right)}\bar{f}_{\mathbf{n}}^{\left(1\right)}\left(x\right)\right\} \leq \kappa \tau RN^{\frac{1}{2}}
= \kappa a_{p}^{\left(1\right)}N^{\frac{1}{2}-2\alpha},
\]
and 
\[
\sup_{x\in\mathcal{B}_{N-1}\left(0,N^{-\alpha}\right)}\mathbb{E}\left\{ \left(\bar{f}_{\mathbf{n}}^{\left(1\right)}\left(x\right)\right)^{2}\right\} =\sup_{x\in\mathcal{B}_{N-1}\left(0,N^{-\alpha}\right)}\left(d_{\bar{f}_{\mathbf{n}}^{\left(1\right)}}\left(0,x\right)\right)^{2}\leq \left( a_{p}^{\left(1\right)}R^2 \right)^2= \left(a_{p}^{\left(1\right)} \right)^2 N^{-4\alpha}.
\]
Thus, from the Borell-TIS
inequality \cite[Theorem 2.1.1]{RFG},
\[
\lim_{N\to\infty}\mathbb{P}\left\{ \sup_{x\in\mathcal{B}_{N-1}\left(0,N^{-\alpha}\right)}\bar{f}_{\mathbf{n}}^{\left(1\right)}\left(x\right)>2\kappa a_{p}^{\left(1\right)}N^{\frac{1}{2}-2\alpha}\right\} =0.
\]
From symmetry of the field $\bar{f}_{\mathbf{n}}^{\left(1\right)}\left(x\right)$,
one can treat the infimum similarly. Then, using a union bound the
lemma follows.\qed

\subsection*{Proof of the case $i=6$ (Lemma \ref{lem:g4B4})}

Recall that
\begin{align*}
	g_{6}\left(\boldsymbol{\sigma}\right)  =\sup_{x\in\mathcal{B}_{N-1}\left(0,N^{-\alpha}\right)}\left|\sqrt{N}\bar{f}_{\boldsymbol{\sigma}}^{\left(2\right)}\left(x\right)\right|,  \quad B_{6}=\left(-CN^{1-3\alpha},CN^{1-3\alpha}\right),
\end{align*}
and 
\[
\bar{f}_{\boldsymbol{\sigma}}^{\left(2\right)}\left(x\right)=\bar{f}_{\boldsymbol{\sigma}}\left(x\right)-\bar{f}_{\boldsymbol{\sigma}}\left(0\right)-\left\langle \nabla\bar{f}_{\boldsymbol{\sigma}}\left(0\right),x\right\rangle -\frac{1}{2}x^{T}\nabla^{2}\bar{f}_{\boldsymbol{\sigma}}\left(0\right)x,
\]
where $C>0$ is a constant which can be assumed to be large enough
whenever needed.

By Lemma \ref{lem:K-R},
\begin{align*}
	& \mathbb{E}\left\{ \#\left\{ \sqrt{N}\boldsymbol{\sigma}\in\mathscr{C}_{N}\left(L\right):\,g_{6}\left(\boldsymbol{\sigma}\right)\notin B_{6},\,g_{1}\left(\boldsymbol{\sigma}\right)\in B_{1}\right\} \right\} \\
	& \quad \leq\omega_{N}\left(\left(N-1\right)p\left(p-1\right)\right)^{\frac{N-1}{2}}\varphi_{\nabla f(\mathbf n)}\left(0\right)\nonumber 
	\mathbb{E}\left\{ \left|\det\left(\frac{\nabla^{2}f(\mathbf n)}{\sqrt{\left(N-1\right)p\left(p-1\right)}}\right)\right|\cdots \right.\\
	&\quad \left. \mathbf{1}\Big\{\left|\sqrt{N}f(\mathbf n)-m_{N}\right|<L,\,g_{6}\left(\mathbf{n}\right)\notin B_{6},\,g_{1}\left(\mathbf{n}\right)\in B_{1}\Big\}\,\Bigg|\,\nabla f(\mathbf n)=0\right\} .
\end{align*}
By the similar argument to the one that was used in the proof for the case $i=4$ to show that \eqref{eq:56} was enough to finish the proof there, here we have that the proof will follow if we can show that for any $K>0$ choosing $C$ (with we which defined $B_6$) large enough, for large
$N$, 
\begin{equation}
\mathbb{P}\left\{ g_{6}\left(\mathbf{n}\right)\notin B_{6}\,\Bigg|\,\nabla f\left(\mathbf{n}\right)=0\right\} \leq e^{-KN}.\label{eq:l1}
\end{equation}

From
Lemmas \ref{lem:generic entropy bd} and \ref{lem:entropy_est}, with $R=N^{-\alpha}$, $\tau=a_p^{(2)}R^2$ and with $\kappa$ being the universal constant of Lemma \ref{lem:generic entropy bd},
\begin{equation*}
	\mathbb{E}\left\{ \sup_{x\in\mathcal{B}_{N-1}\left(0,N^{-\alpha}\right)}\sqrt{N}\bar{f}_{\mathbf{n}}^{\left(2\right)}\left(x\right)\right\} \leq \kappa\tau R N= \kappa a_p^{(2)} N^{1-3\alpha},
\end{equation*}
and
\begin{align*}
\sup_{x\in\mathcal{B}_{N-1}\left(0,N^{-\alpha}\right)}\mathbb{E}\left\{ \left( \sqrt{N}\bar{f}_{\mathbf{n}}^{\left(2\right)}\left(x\right) \right)^{2}\right\} &=N \sup_{x\in\mathcal{B}_{N-1}\left(0,N^{-\alpha}\right)}\left(d_{\bar{f}_{\mathbf{n}}^{\left(2\right)}\left(x\right)}\left(0,x\right)\right)^{2}\\
&\leq N\left( a_p^{(2)}R^3 \right)^2= \left(a_{p}^{\left(2\right)} \right)^2 N^{1-6\alpha}.
\end{align*}

By the Borell-TIS inequality \cite[Theorem 2.1.1]{RFG}, for large $C$,
\begin{align*}
 \mathbb{P}\left\{ \sup_{x\in\mathcal{B}_{N-1}\left(0,N^{-\alpha}\right)}\sqrt{N}\bar{f}_{\mathbf{n}}^{\left(2\right)}\left(x\right)>CN^{1-3\alpha}\right\} 
  \leq \exp\left\{ -\frac{1}{2} N \left(\frac{C-\kappa a_p^{(2)}}{a_p^{(2)}}\right)^2\right\}.
\end{align*}

By symmetry, the same holds if we replace the supremum above by an infimum. Thus, from the union bound,
\begin{align*}
	\mathbb{P}\left\{ g_6(\mathbf n)\notin B_6 \right\}  &= \mathbb{P}\left\{ \sup_{x\in\mathcal{B}_{N-1}\left(0,N^{-\alpha}\right)}\left|\sqrt{N}\bar{f}_{\mathbf{n}}^{\left(2\right)}\left(x\right)\right|>CN^{1-3\alpha}\right\} \\
	&\leq 2\exp\left\{ -\frac{1}{2} N \left(\frac{C-\kappa a_p^{(2)}}{a_p^{(2)}}\right)^2\right\}=:S(C,N).
\end{align*}

What remains is to show the same conditional on $\nabla f (\mathbf n)=0$, as in \eqref{eq:l1}. Since $\bar{f}_{\mathbf{n}}^{\left(2\right)}\left(x\right)$ is a continuous field, it is enough to show that for any finite set of points $x_1,...,x_k\in \mathcal{B}_{N-1}\left(0,N^{-\alpha}\right)$,
\[
\mathbb{P}\left\{ \left. \sup_{i\leq k}\left|\sqrt{N}\bar{f}_{\mathbf{n}}^{\left(2\right)}\left(x_i\right)\right|>CN^{1-3\alpha} \, \right| \, \nabla f\left(\mathbf{n}\right)=0 \right\}\leq S(C,N).
\]

Equivalently, it is sufficient to show that
\begin{equation}
\mathbb{P}\left\{ \left. \left(\sqrt{N}\bar{f}_{\mathbf{n}}^{\left(2\right)}\left(x_i\right)\right)_{i\leq k} \in \left[-CN^{1-3\alpha},CN^{1-3\alpha}\right]^k \, \right| \, \nabla f\left(\mathbf{n}\right)=0 \right\} \label{eq:l2}
\end{equation}
can only decrease by removing the conditioning.

We note that 
\begin{align}
\label{eq:k8} \left( \sqrt{N}\bar{f}_{\mathbf{n}}^{\left(2\right)}\left(x_i\right) \right)_{i\leq k} &= \left( \mathbb E\left\{\left. \sqrt{N}\bar{f}_{\mathbf{n}}^{\left(2\right)}\left(x_i\right)\, \right| \,\nabla f (\mathbf n) \right\} \right)_{i\leq k} \\
&+ \left( \sqrt{N}\bar{f}_{\mathbf{n}}^{\left(2\right)}\left(x_i\right) - \mathbb E\left\{\left. \sqrt{N}\bar{f}_{\mathbf{n}}^{\left(2\right)}\left(x_i\right)\, \right| \,\nabla f (\mathbf n) \right\} \right)_{i\leq k},\nonumber
\end{align}
with the two summands in the right-hand side being independent since $\left\{\bar{f}_{\mathbf{n}}^{\left(2\right)}\left(x\right) \right\}_x$ and  $\nabla f (\mathbf n)$ are jointly Gaussian. Also, denoting the Gaussian vector in the second line above by $V_k$ we have that the conditional probability \eqref{eq:l2} is equal to
\begin{equation}
\label{eq:k9}\mathbb P\left\{  V_k\in  \left[-CN^{1-3\alpha},CN^{1-3\alpha}\right]^k \right\}.
\end{equation}
Since the set $\left[-CN^{1-3\alpha},CN^{1-3\alpha}\right]^k$ is convex and symmetric about the origin, by Anderson's inequality \cite[Corollary 3]{Anderson} and \eqref{eq:k8}, we have that \eqref{eq:k9} is bounded from below by
\[
\mathbb{P}\left\{  \left(\sqrt{N}\bar{f}_{\mathbf{n}}^{\left(2\right)}\left(x_i\right)\right)_{i\leq k} \in \left[-CN^{1-3\alpha},CN^{1-3\alpha}\right]^k  \right\} 
\]
and the proof is completed.\qed

\subsection*{Proof of the case $i=7$ (Lemma \ref{lem:giBi})}

Recall that
\[
	g_{7}\left(\boldsymbol{\sigma}\right)  =\inf_{x:\,\left\Vert x\right\Vert =N^{-\alpha}}\sqrt{N}\bar{f}_{\boldsymbol{\sigma},quad}\left(x\right)-\sqrt{N}\bar{f}_{\boldsymbol{\sigma}}\left(0\right),  \quad B_{7}=\left(\frac{1}{2}K_{p,\delta}N^{1-2\alpha},\,\infty\right),
\]
where
\[
\bar{f}_{\boldsymbol{\sigma},quad}\left(x\right)  =\bar{f}_{\boldsymbol{\sigma}}\left(0\right)+\left\langle \nabla\bar{f}_{\boldsymbol{\sigma}}\left(0\right),x\right\rangle +\frac{1}{2}x^{T}\nabla^{2}\bar{f}_{\boldsymbol{\sigma}}\left(0\right)x,
\]
and
\[
K_{p,\delta}=p\left(E_{0}-E_{\infty}\right)-\delta>0.
\]

Conditional on 
\begin{equation}
\nabla^{2}f\left(\mathbf{n}\right)=\mathbf{A}_{N-1},\,\nabla f\left(\mathbf{n}\right)=0,\,f\left(\mathbf{n}\right)=u,\label{eq:l3}
\end{equation}
by \eqref{eq:E} we have
\[
\sqrt{N}\bar{f}_{\mathbf{n},quad}\left(x\right)-\sqrt{N}\bar{f}_{\mathbf{n}}\left(0\right)=\frac{1}{2}\sqrt{N}x^{T}\mathbf{A}_{N-1}x.
\]
Assuming that $\mathbf{A}_{N-1}\in B_{1}$, the minimal eigenvalue
of $\mathbf{A}_{N-1}$ is $\sqrt{N}K_{p,\delta}$ at least. Thus,
for $x$ with $\left\Vert x\right\Vert =N^{-\alpha}$, deterministically,
\[
\frac{1}{2}\sqrt{N}x^{T}\mathbf{A}_{N-1}x\geq\frac{1}{2}K_{p,\delta}N^{1-2\alpha}.
\]
This, of course, completes the proof. \qed

\subsection*{Proof of the case $i=8$ (Lemma \ref{lem:giBi})}

Recall that
\[
g_{8}\left(\boldsymbol{\sigma}\right)  =\inf_{x:\,\left\Vert x\right\Vert =N^{-\alpha}}\bar{f}_{\boldsymbol{\sigma},lin}^{\prime}\left(x\right)-\bar{f}_{\boldsymbol{\sigma}}^{\prime}\left(0\right),  \quad B_{8}=\left(-N^{-\frac{1}{2}-\alpha+\epsilon},\,\infty\right).
\]

Note that
\[
\bar{f}_{\mathbf{n},lin}^{\prime}\left(x\right)-\bar{f}_{\mathbf{n}}^{\prime}\left(0\right)=\left\langle \nabla\bar{f}_{\mathbf{n}}^{\prime}\left(0\right),x\right\rangle 
\]
is independent of all the variables in (\ref{eq:l3}). Thus, from
Corollary \ref{cor:1} and \eqref{eq:E}, in this case the claim in the lemma is equivalent
to the statement that 
\[
\lim_{N\to\infty}\mathbb{P}\left\{ \sqrt{p}Q_{N-1}N^{-\alpha}\leq -N^{\frac{1}{2}-\alpha+\epsilon}\right\} =0,
\]
where $Q_{N-1}$ is a standard Chi variable with $N-1$ degrees of
freedom. Since $Q_{N-1}/\sqrt N\to 1$ in probability, this completes the proof.\qed

\section{Appendix I: proof of Lemma \ref{lem:apx.Ligg}}
First we remark that by conditioning on $\eta$, for any $x\in\mathbb R$,
\[
\mathbb{E}\left\{ \eta_{\infty}^{+}\left((-\infty,x)\right)\right\} \leq \mathbb E \Big\{\sum_i v(x-\eta_i)\Big\} = \int_{\mathbb{R}}e^{ay} v(x-y) dy,
\]
which is finite due to our assumption on $v(x)$; therefore
$\eta_{\infty}^{+}$ is locally finite. 

Let $g:\,\mathbb{R}\to\mathbb{R}$ be an arbitrary compactly supported,
non-negative function which will be fixed throughout the proof. Let
$\kappa>0$ be a large enough constant such that the support of $g$
is contained in $\left[-\kappa,\kappa\right]$. Denote the event 
\begin{equation*}
B=B_{L,N,\kappa}:=\left\{ \left.\eta_{N}^{+}\right|_{\left[-\kappa,\kappa\right]}=\left.\bar{\eta}_{N,L}\right|_{\left[-\kappa,\kappa\right]}\right\}.
\end{equation*}
Since this is the same event as in (\ref{eq:15}), defining $\epsilon(L)$ by  
\begin{equation}
\liminf_{N\to\infty}\mathbb{P}\left\{ B_{L,N,\kappa}\right\} =1-\epsilon\left(L\right),\label{eq:16}
\end{equation}
we have that $\epsilon\left(L\right)\to0$.
Denote $\left\langle g,\zeta\right\rangle \triangleq\int gd\zeta$
and let $\mathcal{L}_{\zeta}\left[g\right]\triangleq\mathbb{E}\left\{ \exp\left\{ -\left\langle g,\zeta\right\rangle \right\} \right\} $
be the Laplace functional of $\zeta$. Then 
\begin{equation}
\limsup_{N\to\infty}\left|\mathcal{L}_{\eta_{N}^{+}}\left[g\right]-\mathbb{E}\left\{ \mathbf{1}_{B}\exp\left\{ -\left\langle g,\eta_{N}^{+}\right\rangle \right\} \right\} \right|=\limsup_{N\to\infty}\mathbb{E}\left\{ \mathbf{1}_{B^{c}}\exp\left\{ -\left\langle g,\eta_{N}^{+}\right\rangle \right\} \right\} \le\epsilon\left(L\right).\label{eq:22}
\end{equation}

Fix some $L>0$, let $\delta>0$, and let $m_{0}:=m_{0}\left(L,\delta\right)$
be a natural number such that $\mathbb{P}\left\{ D_{L,N,\delta}\right\} >1-\delta$,
for all $N$, with 
\[
D:=D_{L,N,\delta}\triangleq\left\{ Q_{N,L}\leq m_{0}\right\} ,
\]
where $Q_{N,L}=\eta_{N}\left(\left[-L,L\right]\right)$  as defined in point (1) of the lemma. The existence
of $m_{0}$ is guaranteed by tightness  of the convergent sequence
$\eta_{N}$. We have that 
\begin{equation}
\left|\mathbb{E}\left\{ \mathbf{1}_{B}\exp\left\{ -\left\langle g,\eta_{N}^{+}\right\rangle \right\} \right\} -\mathbb{E}\left\{ \mathbf{1}_{B\cap D}\exp\left\{ -\left\langle g,\eta_{N}^{+}\right\rangle \right\} \right\} \right|\le\delta.\label{eq:21}
\end{equation}
Recall that we assumed in point (1) that $\left.\eta_{N}\right|_{\left[-L,L\right]}=\sum_{i=1}^{Q_{N,L}}\delta_{\eta_{N,L,i}}$.
With $\hat{\eta}_{N,L}\triangleq\sum_{i=1}^{Q_{N,L}}\delta_{\eta_{N,L,i}+X_{i}}$,
\[
\mathbf{1}_{B\cap D}\left|\left\langle g,\eta_{N}^{+}\right\rangle -\left\langle g,\,\hat{\eta}_{N,L}\right\rangle \right|
\]
is stochastically dominated by 
\begin{equation}
\mathbf{1}_{B\cap D}\sum_{i=1}^{m_{0}}\omega_{g}\left(\left|\bar{X}_{N,i}\left(L\right)-X_{i}\right|\right),\label{eq:20}
\end{equation}
where $\omega_{g}\left(t\right)=\sup_{\left|x-y\right|\leq t}\left|g\left(x\right)-g\left(y\right)\right|$
is the modulus of continuity of $g$. Since $g$ is uniformly continuous,
$\omega_{g}$ is continuous. Combining this with point (2) of the
lemma implies that (\ref{eq:20}) converges in probability to $0$.
Therefore, since $e^{-x}$ is bounded and continuous for $x\geq0$,
\[
\limsup_{N\to\infty}\left|\mathbb{E}\left\{ \mathbf{1}_{B\cap D}\exp\left\{ -\left\langle g,\eta_{N}^{+}\right\rangle \right\} \right\} -\mathbb{E}\left\{ \mathbf{1}_{B\cap D}\exp\left\{ -\left\langle g,\hat{\eta}_{N,L}\right\rangle \right\} \right\} \right|=0,
\]
and 
\[
\limsup_{N\to\infty}\left|\mathbb{E}\left\{ \mathbf{1}_{B\cap D}\exp\left\{ -\left\langle g,\eta_{N}^{+}\right\rangle \right\} \right\} -\mathcal{L}_{\hat{\eta}_{N,L}}\left[g\right]\right|<1-\liminf_{N\to\infty}\mathbb{P}\left\{ B_{L,N,\kappa}\cap D_{L,N,\delta}\right\} .
\]
Combining this with the bounds we have on $\mathbb{P}\left\{ D_{L,N,\delta}\right\} $
and $\mathbb{P}\left\{ B_{L,N,\kappa}\right\} $, (\ref{eq:21}),
and (\ref{eq:22}) yields, upon letting $\delta\to0$,
\[
\lim_{L\to\infty}\limsup_{N\to\infty}\left|\mathcal{L}_{\eta_{N}^{+}}\left[g\right]-\mathcal{L}_{\hat{\eta}_{N,L}}\left[g\right]\right|=0.
\]
Thus, in order to complete the proof it is sufficient to show that
for some sequence $L_{k}>0$ such that $\lim_{k\to\infty}L_{k}=\infty$,
\begin{equation}
\lim_{k\to\infty}\lim_{N\to\infty}\mathcal{L}_{\hat{\eta}_{N,L_{k}}}\left[g\right]=\mathcal{L}_{\eta_{\infty}^{+}}\left[g\right].\label{eq:23}
\end{equation}

By \cite[Lemma 11.1.II]{DaleyVereJones}, for all but a countable
set of values $L>0$, the interval $\left[-L,L\right]$ is a stochastic
continuity set of $\eta$. Thus we can choose $L_{k}$ as above such
that, from \cite[Theorem 4.2]{KallenbergMeas},
\[
\left.\eta_{N}\right|_{\left[-L_{k},L_{k}\right]}\stackrel[{\scriptstyle N\to\infty}]{d}{\to}\left.\eta\right|_{\left[-L_{k},L_{k}\right]}.
\]
Hence also
\[
\hat{\eta}_{N,L_{k}}\stackrel[{\scriptstyle N\to\infty}]{d}{\to}\sum_{i:\eta_{i}\in\left[-L_{k},L_{k}\right]}\delta_{\eta_{i}+X_{i}},
\]
from which (\ref{eq:23}) follows.\qed

\section{Appendix II: proof of Lemma \ref{lem:K-R}}

We recall that $f(\boldsymbol{\sigma})=f_{N}(\boldsymbol{\sigma})$
is the unit variance random field on $\mathbb{S}^{N-1}\triangleq\left\{ \boldsymbol{\sigma}\in\mathbb{R}^{N}:\,\left\Vert \boldsymbol{\sigma}\right\Vert _{2}=1\right\} $
given by $f\left(\boldsymbol{\sigma}\right)=\frac{1}{\sqrt{N}}H_{N}\left(\sqrt{N}\boldsymbol{\sigma}\right)$,  $(E_i(\boldsymbol{\sigma}))_{i=1}^{N-1}$ is an arbitrary piecewise smooth orthonormal frame field on the sphere (w.r.t the standard Riemannian metric), and 
\[
  \nabla f\left(\boldsymbol{\sigma}\right)=\left(E_{i}f\left(\boldsymbol{\sigma}\right)\right)_{i=1}^{N-1},\,\,\nabla^{2}f\left(\boldsymbol{\sigma}\right)=\left(E_{i}E_{j}f\left(\boldsymbol{\sigma}\right)\right)_{i,j=1}^{N-1}.
  \]

Provided that certain regularity conditions hold, the Kac-Rice formula \cite[Theorem 12.1.1]{RFG} expresses the mean
number of points $\boldsymbol{\sigma}_{0}$ on the sphere at which
 $\nabla f\left(\boldsymbol{\sigma}_{0}\right)=0$, $\sqrt N f\left(\boldsymbol{\sigma}_{0}\right)\in (m_N-L,m_N+L)$, and $g(\boldsymbol{\sigma}_0)\in B'$, for some random field $g(\boldsymbol{\sigma})$. For this one needs to apply \cite[Theorem 12.1.1]{RFG} with $f$ in the statement of \cite[Theorem 12.1.1]{RFG} being equal to `our' $\nabla f\left(\boldsymbol{\sigma}\right)$, and with $h(\boldsymbol{\sigma})=(\sqrt N f\left(\boldsymbol{\sigma}\right),g(\boldsymbol{\sigma}))$ and  $B=(m_N-L,m_N+L)\times B'$. This yields
 \begin{align}
 		& \mathbb{E}\left\{ \#\left\{ \sqrt{N}\boldsymbol{\sigma}\in\mathscr{C}_{N}\left(L\right):\,g(\boldsymbol{\sigma})\in B' \right\} \right\} \nonumber \\
 		&\quad =\mathbb{E}\left\{ \#\left\{ \boldsymbol{\sigma}\in\mathbb{S}^{N-1}:\nabla f\left(\boldsymbol{\sigma}\right)=0,\,h\left(\boldsymbol{\sigma}\right)\in B\right\} \right\} = \int _{\mathbb S^{N-1}}d\mu(\boldsymbol{\sigma})\varphi_{\nabla f\left(\boldsymbol{\sigma}\right)}\left(0\right) \label{eq:355} \\
 		& \quad \times\mathbb{E}\left\{ \left|\det\left(\nabla^{2}f\left(\boldsymbol{\sigma}\right)\right)\right|\mathbf{1}\Big\{\left|\sqrt{N}f\left(\boldsymbol{\sigma}\right)-m_{N}\right|<L,\,g\left(\boldsymbol{\sigma}\right)\in B'\Big\}\,\Bigg|\,\nabla f\left(\boldsymbol{\sigma}\right)=0\right\} ,\nonumber
\end{align}
where $\mu$ is the standard (Hausdorff) measure on the sphere and $\varphi_{\nabla f\left(\boldsymbol{\sigma}\right)}\left(0\right)$ is
the Gaussian density of $\nabla f\left(\boldsymbol{\sigma}\right)$ at $0$. 
Assuming that $(f(\boldsymbol{\sigma}),g(\boldsymbol{\sigma}))$ is a stationary field on the sphere, one can replace $\boldsymbol{\sigma}$ everywhere in the right-hand side above by $\mathbf n$, remove the integration, and multiply the right-hand side above by $\omega_{N}$ (see \eqref{eq:omega_vol}), the
surface area of the $N-1$-dimensional unit sphere.\footnote{We note that the integrand in \eqref{eq:355} is a continuous Radon-Nikodym derivative (as seen from applying the Kac-Rice formula \cite[Theorem 12.1.1]{RFG} to express the mean number of points as above in a subset of the sphere) and therefore it is independent, at each point $\boldsymbol{\sigma}$, of the choice of the orthonormal frame field $(E_i(\boldsymbol{\sigma}))_{i=1}^{N-1}$.}
This yields
 \begin{align}
 	& \mathbb{E}\left\{ \#\left\{ \sqrt{N}\boldsymbol{\sigma}\in\mathscr{C}_{N}\left(L\right):\,g(\boldsymbol{\sigma})\in B' \right\} \right\} =  \omega_N\varphi_{\nabla f(\mathbf n) }\left(0\right) \nonumber \\
 	& \quad \times\mathbb{E}\left\{ \left|\det\left(\nabla^{2}f(\mathbf n)\right)\right|\mathbf{1}\Big\{\left|\sqrt{N}f(\mathbf n)-m_{N}\right|<L,\,g\left(\boldsymbol{\sigma}\right)\in B'\Big\}\,\Bigg|\,\nabla f(\mathbf n)=0\right\} .\label{eq:3555}
 \end{align}

In Lemma \ref{lem:K-R} we stated one case where (\ref{eq:35}) holds as an equality. This case is equivalent to \eqref{eq:3555}  without the restrictions on $g(\boldsymbol{\sigma})$ (after dividing and multiplying by $\sqrt{Np(p-1)}$ and $(Np(p-1))^\frac{N-1}{2}$ inside the determinant and outside of it, respectively). Thus in order to prove this case what remains is  only to check the aforementioned regularity conditions. Those are not difficult to verify when we do not need to worry about the dependence of $g(\boldsymbol{\sigma})$ and $f(\boldsymbol{\sigma})$. In particular, the fact that the conditions hold here follows from the case we treat below where we show that they also hold with some $g(\boldsymbol{\sigma})$ without removing the restrictions on it.

In the other two cases in Lemma \ref{lem:K-R}, (\ref{eq:35}) needs to proved in its original from, with an inequality, for $2\leq i\leq 8$; and in a modified form without the condition $g_1(\boldsymbol{\sigma})\in B_1$ in both sides, for $i=1$. The proof of the second of the latter two cases is similar to that of the first and therefore we only treat the first. This case is equivalent to \eqref{eq:35} if $g(\boldsymbol{\sigma})=(g_i(\boldsymbol{\sigma}),g_1(\boldsymbol{\sigma}))$ and $B'=B_i^c\times B_1$, and the equality is replaced with an inequality. In order to apply the above to \eqref{eq:35} directly we will need to verify the regularity conditions and in this case this is not necessarily an easy task. Instead of doing so we relate \eqref{eq:35} to a similar formula which holds for some modified random fields for which checking the regularity conditions is much easier.

 With $B_{i}^{c}$  denoting the complement of $B_{i}$ in the corresponding Euclidean space
 (i.e., in $\mathbb{R}$ for $i>2$, $\mathbb{R}^{\left(N-1\right)^{2}}$
 for $i=1$, and $\mathbb{R}^{N-1}$ for $i=2$), let $(B_{i}^{c})_\epsilon$
 denote the set of points with distance at most $\epsilon$ from $B_{i}^{c}$.
 Let $Z_{i}$ be a ($\boldsymbol{\sigma}$-independent) Gaussian vector
 in the corresponding Euclidean space all of whose entries are i.i.d
 standard Gaussian variables. Setting
 \begin{align*}
 	h_{i}\left(\boldsymbol{\sigma}\right) & =\left(g_{i}\left(\boldsymbol{\sigma}\right),g_{1}\left(\boldsymbol{\sigma}\right),\sqrt{N}f\left(\boldsymbol{\sigma}\right)\right),\\
 	h_{i,\epsilon}\left(\boldsymbol{\sigma}\right) & =\left(g_{i}\left(\boldsymbol{\sigma}\right)+\epsilon Z_{i},g_{1}\left(\boldsymbol{\sigma}\right),\sqrt{N}f\left(\boldsymbol{\sigma}\right)\right),
 \end{align*}
 we have that, with $D_{N}=\left(m_{N}-L,\,m_{N}+L\right)$,
 \begin{align*}
 	& \mathbb{E}\left\{ \#\left\{ \sqrt{N}\boldsymbol{\sigma}\in\mathscr{C}_{N}\left(L\right):\,g_{i}\left(\boldsymbol{\sigma}\right)\notin B_{i},\,g_{1}\left(\boldsymbol{\sigma}\right)\in B_{1}\right\} \right\} \\
 	& =\mathbb{E}\left\{ \#\left\{ \boldsymbol{\sigma}\in\mathbb{S}^{N-1}:\nabla f\left(\boldsymbol{\sigma}\right)=0,\,h_{i}\left(\boldsymbol{\sigma}\right)\in B_{i}^{c}\times B_{1}\times D_{N}\right\} \right\} \\
 	& \leq \mathbb{E}\left\{ \#\left\{ \boldsymbol{\sigma}\in\mathbb{S}^{N-1}:\nabla f\left(\boldsymbol{\sigma}\right)=0,\,h_{i,\epsilon_{2}}\left(\boldsymbol{\sigma}\right)\in (B_{i}^{c})_{\epsilon_1}\times B_{1}\times D_{N}\right\} \right\} +\delta\left(\epsilon_{1},\epsilon_{2}\right),
 \end{align*}
for $\epsilon_1,\epsilon_2>0$ and an appropriate function $\delta(\epsilon_1,\epsilon_2)$ satisfying $\lim_{\epsilon_{2}\to0}\delta\left(\epsilon_{1},\epsilon_{2}\right)=0$.

Thus, provided that we can apply \eqref{eq:3555} in the current situation, we have that
 \begin{align*}
 	& \mathbb{E}\left\{ \#\left\{ \sqrt{N}\boldsymbol{\sigma}\in\mathscr{C}_{N}\left(L\right):\,g_{i}\left(\boldsymbol{\sigma}\right)\notin B_{i},\,g_{1}\left(\boldsymbol{\sigma}\right)\in B_{1}\right\} \right\} \\
 	& \leq \delta\left(\epsilon_{1},\epsilon_{2}\right)+ \omega_{N}\left(Np\left(p-1\right)\right)^{\frac{N-1}{2}}\varphi_{\nabla  f(\mathbf n)}\left(0\right) \mathbb{E}\left\{ \left|\det\left(\frac{\nabla^{2}f(\mathbf n)}{\sqrt{Np\left(p-1\right)}}\right)\right|\right.\cdots\\
 	&\left. \mathbf{1}\Big\{\left|\sqrt{N}f(\mathbf n)-m_{N}\right|<L,\,g_{i}\left(\boldsymbol{\sigma}\right)+\epsilon_{2}Z_{i}\in (B_{i}^{c})_{\epsilon_{1}},\,g_{1}\left(\boldsymbol{\sigma}\right)\in B_{1}\Big\}\,\Bigg|\,\nabla f(\mathbf n)=0\right\}.
 \end{align*}
Since $g_{i}\left(\boldsymbol{\sigma}\right)+\epsilon_{2}Z_{i}\stackrel{d}{\to}g_{i}\left(\boldsymbol{\sigma}\right)$,
as $\epsilon_{2}\to0$, and the indicator function of $(B_{i}^{c})_{\epsilon_{1}}$
is upper semi-continuous, the limit of the expectation in the left-hand side above,
as $\epsilon_{2}\to0$, is bounded from above by the same expectation
with $\epsilon_{2}=0$. Therefore, by first taking $\epsilon_{2}\to0$
and then taking $\epsilon_{1}\to0$ (and using the monotone convergence
theorem and the fact that $B_i^c$ is closed) the lemma follows.

All that remains is to verify the conditions required for the application
of \cite[Theorem 12.1.1]{RFG}, i.e., conditions (a)-(g) there, with 
$f$, $\nabla f$ and $h$ of \cite[Theorem 12.1.1]{RFG} being equal to `our' $\nabla f\left(\boldsymbol{\sigma}\right)$,
$\nabla^{2}f\left(\boldsymbol{\sigma}\right)$ and $h_{i,\epsilon}\left(\boldsymbol{\sigma}\right)$.
In particular, we need to account for $8$ different cases (of $i$). 

From the definition of the Hamiltonian (\ref{eq:Hamiltonian}), the
fact that $f\left(\boldsymbol{\sigma}\right)$ is Gaussian, and from
the Borell-TIS inequality \cite[Theorem 2.1.1]{RFG}, the components
of $\nabla f\left(\boldsymbol{\sigma}\right)$, $\nabla^{2}f\left(\boldsymbol{\sigma}\right)$
and $h_{i,\epsilon}\left(\boldsymbol{\sigma}\right)$ (in any of the cases above) are continuous and have 
finite variance at any $\boldsymbol{\sigma}$.\footnote{For the cases $i=2,3,4$ we have continuity and finite variance conditional
on $g_{1}\left(\boldsymbol{\sigma}\right)\in B_{1}$ (and not in general),
which is, of course, sufficient since we anyway work under this conditioning
and since by continuity if $g_{1}\left(\boldsymbol{\sigma}_{0}\right)\in B_{1}$
for a particular point $\boldsymbol{\sigma}_{0}$, then there exists
a neighborhood of $\boldsymbol{\sigma}_{0}$ on the sphere on which
$g_{1}\left(\boldsymbol{\sigma}\right)\in B_{1}$.} Combining this with Corollary \ref{cor:1} which assures the non-degeneracy
of the Gaussian variable $\left(f\left(\boldsymbol{\sigma}\right),\nabla f\left(\boldsymbol{\sigma}\right),\nabla^{2}f\left(\boldsymbol{\sigma}\right)\right)$,
up to symmetry of the Hessian, conditions (a)-(e) can be verified
for all the cases. Condition (f) follows since $\nabla^{2}f\left(\boldsymbol{\sigma}\right)$
is Gaussian and stationary. Condition (g) which involves the modulus of continuity of the random
fields, is verified for $f\left(\boldsymbol{\sigma}\right)$, $\nabla f\left(\boldsymbol{\sigma}\right)$
and $\nabla^{2}f\left(\boldsymbol{\sigma}\right)$ directly from the definition of the Hamiltonian (\ref{eq:Hamiltonian}).

All that we have left to show is that condition (g) holds for the
components of $g_{i}\left(\boldsymbol{\sigma}\right)+\epsilon Z_{i}$
for any $1\leq i\leq8$ and $\epsilon>0$. Since $Z_{i}$ does not depend
on $\boldsymbol{\sigma}$, the components of $g_{i}\left(\boldsymbol{\sigma}\right)+\epsilon Z_{i}$
and $g_{i}\left(\boldsymbol{\sigma}\right)$ have the same modulus
of continuity, and it is therefore enough to prove that condition
(g) holds for the components of $g_{i}\left(\boldsymbol{\sigma}\right)$.
For $i=1$ this is already proven, since $g_{1}\left(\boldsymbol{\sigma}\right)$
is equal to $\nabla^{2}f\left(\boldsymbol{\sigma}\right)$. For $i=2,3,4$
(working under the conditioning $g_{1}\left(\boldsymbol{\sigma}\right)\in B_{1}$),
the modulus of continuity of $g_{i}\left(\boldsymbol{\sigma}\right)$
can be related to that of the components of $\left(f\left(\boldsymbol{\sigma}\right),\nabla f\left(\boldsymbol{\sigma}\right),\nabla^{2}f\left(\boldsymbol{\sigma}\right)\right)$
 to prove condition (g).

In order to deal with the case $i=5,...,8$, we first note that the
modulus of continuity of the supremums in the definition of $g_{i}$
is bounded by that of the functions the supremum of which is taken.
The latter is bounded, up to a constant depending on $N$, by the
sum of moduli of continuity of the components of $f\left(\boldsymbol{\sigma}\right)$,
$\nabla f\left(\boldsymbol{\sigma}\right)$ and $\nabla^{2}f\left(\boldsymbol{\sigma}\right)$.
From this condition (g) follows in those cases too and the proof is
completed.\qed

\bibliographystyle{plain}
\bibliography{master}

\end{document}